\newcommand{\RandWSQR}{{SE-QRCS}}
\newcommand{\sM}{{d}}
\newcommand{\sss}{{s}}
\newcommand{\kk}{{k'}}
\newtheorem{definition}{Definition}[section]
\newtheorem{proposition}[definition]{Proposition}
\newtheorem{remark}[definition]{Remark}
\newtheorem{theorem}[definition]{Theorem}
\newtheorem{lemma}[definition]{Lemma}
\newtheorem{corollary}[definition]{Corollary}
\title{Efficient QR-based Column Subset Selection through Randomized Sparse Embeddings }
\author{
    Israa Fakih\thanks{PSI Center for Scientific Computing, Theory and Data, Villigen PSI; Institute of Mathematics, EPFL, Switzerland. Email: israa.fakih@psi.ch} 
    \and
    Laura Grigori\thanks{PSI Center for Scientific Computing, Theory and Data, Villigen PSI; Institute of Mathematics, EPFL, Switzerland. Email: laura.grigori@epfl.ch}
}
\date{}  
\begin{document}

\maketitle
\begin{abstract}
In this paper, we introduce an efficient algorithm for column subset selection that combines the column-pivoted QR factorization with sparse subspace embeddings. The proposed method, \RandWSQR{}, is particularly effective for wide matrices with significantly more columns than rows. Starting from a matrix $A$, the algorithm selects $k$ columns from the sketched matrix  $B = A \Omega^T$, where $\Omega$ is a sparse oblivious subspace embedding for a subspace of dimension $rank(A)$. The sparsity structure of $\Omega$ is then exploited to map the selected pivots back to the corresponding columns of $A$, which are then used to produce the final subset of selected columns. We prove that this procedure yields a factorization with strong rank-revealing properties, thus revealing the spectrum of $A$. The resulting bounds exhibit a reduced dependence on the number of columns of $A$ compared to those obtained from the strong rank-revealing QR factorization of $A$. For general matrices, the algorithm can be extended by first applying an additional subspace embedding of $range(A)$.

\end{abstract}

\vspace{12pt}

\section{Introduction}

In this paper we consider the problem of selecting $k$ columns forming a matrix $C$ from a large matrix $A\in \mathbb{R}^{\sM\times n}$, with $k\ll n$, that allow us to reveal its spectrum. This problem arises in many applications in scientific computing and data analysis and allows for example to reveal the rank of a matrix or compute its low rank approximation. This problem is also known as the Column Subset Selection Problem (CSSP), in particular when the goal is to minimize the norm of the error matrix $\|A-C C^+ A\|$, where $C^+$ denotes the pseudoinverse of $C$ and the spectral norm or the Frobenius norm are in general used. Finding the optimal $k$ columns is known to be an NP-hard problem \cite{SHITOV202152} when the Frobenius norm is used in this minimization problem.
The strong rank-revealing QR factorization \cite{gu1996efficient} can be used to select $k$ columns from a matrix $A$ while revealing its spectrum.  It has the following form:
\begin{equation}
A \Pi=Q\begin{pmatrix}
    R_{11} & R_{12}\\
     & R_{22}
\end{pmatrix}
\end{equation}
where $Q\in \mathbb{R}^{\sM\times \sM}$ is an orthogonal matrix, $R_{11}\in \mathbb{R}^{k\times k}$ is an upper triangular, $R_{12}\in \mathbb{R}^{k\times (n-k)}$, and $R_{22}\in \mathbb{R}^{(\sM-k)\times (n-k)}$. The column permutation matrix $\Pi\in \mathbb{R}^{n\times n}$ is chosen such that the following relations are satisfied:
\begin{equation}
\label{eq:strongRRQRIntro}
    1\leq \frac{\sigma_{i}(A)}{\sigma_{i}(R_{11})} ,\frac{\sigma_{j}(R_{22})}{\sigma_{j+k}(A)}\leq \rho_{1}(k,n) \qquad\qquad
    \left\| R_{11}^{-1} R_{12} \right\|_{\max} \leq \rho_{2}(k,n)
\end{equation}
for $1\leq i \leq k$ and $1\leq j \leq \min(m,n)-k$, with $\rho_{1}(k,n)$, $\rho_{2}(k,n)$ low-degree polynomials in $n$ and $k$. The algorithm introduced in \cite{gu1996efficient} satisfies \eqref{eq:strongRRQRIntro} for a given small constant $f > 1$ with $\rho_{1}(k,n) = \sqrt{1+f^{2}k(n-k)}$ and $\rho_2 (k,n) = f$. These inequalities ensure that the singular values of $R_{11}$ and $R_{22}$ are good approximations for the largest $k$ and last $\min(\sM,n)-k$ singular values of $A$ respectively. Writing the inequalities using ratios assumes for simplicity and without loss of generality that the singular values of $A$ and $R_{11}$ are nonzero.  This factorization also bounds the largest elements of  $R_{11}^{-1} R_{12}$, thus ensuring numerical stability of the factorization. If the factorization $A\Pi=QR$ satisfies the strong rank revealing properties in \eqref{eq:strongRRQRIntro}, then it can be shown that 
$$\|A-C C^+A\|_{2}\leq \rho_1(k,n)\|A-A_{k}\|_{2} = \rho_1(k,n) \sigma_{k+1} (A),$$
where $C$ consists of the first $k$ columns of $A \Pi$ and $A_{k}$ is the best rank-k  approximation of $A$ obtained through the truncated SVD factorization. The computational complexity of this factorization is $O(\sM nk)$, which makes it significantly expensive when dealing with very large datasets.

Several randomized methods have emerged in the recent years to accelerate solving  the column selection problem using the QR factorization with column permutations.  These approaches rely on a dimensionality reduction technique that allows one to embed a high dimensional subspace $\mathcal{W}\subset \mathbb{R}^{n}$ into a lower dimensional one $(\subseteq \mathbb{R}^{\ell})$ through a linear map $\Omega\in \mathbb{R}^{\ell \times n}$ with $\ell\ll n$ while preserving approximately the geometry of $\mathcal{W}$. In particular, inner products (and hence norms) between vectors in $\mathcal{W}$ are preserved up to a distortion parameter $\epsilon$. 
A linear map satisfying this property for any fixed subspace $\mathcal{W}$ of dimension $\sM$ with high probability is said to be an oblivious subspace embedding. In this work, we focus on sparse embeddings, where the embedding matrix has only a few nonzero entries. An example of oblivious sparse embedding is the OSNAP distribution, introduced in \cite{nelson2013osnap}. We then discuss the possibility of using non-oblivious sparse embeddings, where the embedding distribution is constructed based on the subspace $\mathcal{W}$, so that it preserves the norm of vectors for that specific subspace. 

Randomized QRCP \cite{martinsson2017householder,Duersch_2017,xiao2017fast}  consists of first sketching the columns of $A$, using an $\epsilon$-embedding of the range of $A$, to form the matrix $A_{sk}=\Omega A \in \mathbb{R}^{\ell\times n}$, where $\Omega\in \mathbb{R}^{\ell\times \sM}$ and $ \ell\ll \sM$. The columns are then selected by performing QR factorization with greedy column pivoting (QRCP) \cite{golub1965numerical} on the sketched matrix $A_{sk}$.  These columns are then used as pivots to compute the QR factorization of $A$. It has recently been shown \cite{grigori2025randomized} that if a strong RRQR factorization is used to select columns from the sketched matrix, then a factorization of $A$ that satisfies the strong RRQR properties in \eqref{eq:strongRRQRIntro} can be obtained under certain conditions. This approach allows to accelerate the QR factorization with column permutations by separating the selection of the pivots from the QR factorization of $A$ that can be computed without pivoting, thus allowing the usage of BLAS3 kernels and reducing the communication cost due to permuting columns. However, when the number of columns is very large, the selection of columns from $A_{sk}$ can still be very expensive or even dominate the overall cost.
 A different approach, introduced in \cite{boutsidis2009improved}, relies on sampling columns according to probabilities derived from the top$-k$ right singular vectors of $A$. The algorithm consists of two stages. The first stage, random, consists of sampling $O(klog(k))$ columns according to probabilities proportional to the row norms of the matrix containing the top-$k$ singular vectors. Examples of such probabilities are given in \cite{boutsidis2009improved} [Eqs (3.3) and (3.4)]. The second stage, deterministic, consists of applying any deterministic algorithm to sample $k$ columns from the subset chosen in the first stage. In \cite{boutsidis2009improved} it is shown that the approximation obtained by this algorithm when using Algorithm 1 of \cite{pan2000existence} in the second stage, satisfies the error bound $\|A-CC^{\dagger}A\|_{2}\leq \rho_1(k,n)\|A-A_{k}\|_{2}$ with $\rho_1(k,n)=O\left(k^{\frac{3}{4}}(min(\sM,n)-k)^{\frac{1}{4}}\right)$ with probability at least 0.7. This spectral error bound improves upon the corresponding approximation guarantee of the strong rank revealing QR factorization of Gu and Eisenstat \cite{gu1996efficient}. However, as computing the probabilities is very expensive, the time complexity of this algorithm is $O(\min (\sM n^{2},\sM^{2}n))$

In this paper, we introduce a randomized algorithm for selecting columns from a matrix
$A \in \mathbb{R}^{d\times n}$ that relies on sparse embeddings such as OSNAP
\cite{nelson2013osnap}, where the embedding matrix $\Omega \in \mathbb{R}^{\ell\times n}$
has exactly $s$ nonzero entries in each column, corresponding to appropriately scaled random signs, such that $\Omega$ satisfies the $\epsilon-$embedding property. The focus of this method is particularly matrices that have many more columns than rows, that is, with $d \ll n$. We refer to our algorithm as \RandWSQR{}.  Thus, it can be used to accelerate the column selection step in algorithms such as randomized QRCP.  Our column selection method is based on four stages. First, the row space of $A$ is embedded through a sparse embedding $\Omega \in \mathbb{R}^{\ell\times n}$ with $\ell\ll n$, to form the matrix $B=A\Omega^{T}\in \mathbb{R}^{\sM \times \ell}$ with a reduced number of columns.  In the second stage, k columns are selected from $B$ by computing its strong RRQR factorization.   Using the structure of sparse embeddings that have few nonzero entries in each row, the third stage relies on the fact that each column of the sketched matrix $B$ is a linear combination of a few columns of the original matrix $A$.  Thus, the columns selected by the strong RRQR factorization of  $B$ correspond to a subset of columns in the original matrix, that we denote by $\tilde{A}_{1}\in \mathbb{R}^{d\times p}$. The final $k$ columns are selected by computing the strong RRQR factorization of $\tilde{A}_{1}$.  

We show that, if strong RRQR is used to select columns from both $B$ and $\tilde{A}_{1}$ with a constant $f>1$, the resulting \RandWSQR{} factorization of $A$ is a strong rank revealing one satisfying 
\begin{equation}
\label{inequality_intro}
1 \leq \frac{\sigma_{i}(A)}{\sigma_{i}(R_{11})}, \;
\frac{\sigma_{j}(R_{22})}{\sigma_{j+k}(A)} \leq \rho_{1}(k,n),
\quad
\| R_{11}^{-1} R_{12} \|_{2} \leq \rho_{2}(k,n).
\end{equation}
with 
$$\rho_{1}(k,n)=\sqrt{1+\frac{4\omega^{*}}{(1-\epsilon)}(1+f^{2}k(\ell-k))(1+f^{2}k(p-k))}$$
and 
$$\rho_{2}(k,n)=\sqrt{\frac{2\omega^{*}}{(1-\epsilon)}(1+f^{2}k(\ell-k))(1+f^{2}k(p-k))}$$
where the value of $p$, the number of columns of $\tilde{A}_{1}$, is in expectation $\mathbb{E}(X)=n\left[1-\left(1-\frac{k}{\ell}\right)^{s}\right]$, which reduces to $\frac{nk}{\ell}$ for the CountSketch case (s=1), and $\omega^{*}=O(\frac{\sqrt{s}p}{\ell})$.
In terms of computational complexity, our algorithm relies on applying strong RRQR to two matrices, $B$ and $\tilde{A}_{1}$, that are smaller than $A$. Consequently, their complexity is $\mathcal{O}(\sM k(p+ \ell))$. This expression highlights a tradeoff between the embedding dimension $\ell$ and the expected number of selected columns $p$. Increasing $\ell$ increases the cost of factorizing $B$ while reducing $p$, and therefore decreasing the cost of factorizing $\tilde{A}_{1}$, and vice versa. In the CountSketch case, we have $\mathbb{E}(p)=\frac{nk}{\ell}$, which leads to a complexity of $\mathcal{O}(\sM k(\frac{nk}{\ell}+ \ell))$ for our algorithm. Minimizing this expression yields the choice $\ell=\sqrt{nk}$.  Combining this requirement with the sketch dimension needed for the embedding property leads to $\ell=\max(d^{2},\sqrt{nk})$ . By adding the complexity of sketching $A$, $\mathcal{O}(\sss n\sM)$, which is facilitated due to the sparsity of $\Omega$, the overall complexity for computing the pivots remains much lower than the traditional strong RRQR factorization of $A$. This is mainly because $p$ is smaller than n. This aspect is further validated in the numerical results that show a reduction in runtime with a factor greater than $7$ and $10$. 

We also discuss the case when leverage scores are known, as in the case of orthogonal matrices, or can be approximated. In this case,  non-oblivious sparse embeddings can be used as the one introduced in \cite{chenakkod2024optimal}, the leverage score sparsified embedding with independent entries or the leverage score sparsified embedding with independent rows. The obtained factorization satisfies the strong RRQR property in \eqref{inequality_intro}  with
 $$\rho_{1}(k,n)=\rho_{2}=O\left(\frac{k\sqrt{\kk}\log^{4}(d)\left(\frac{d}{\epsilon^{2}}-\kk\right)^{1/2}}{l_{min}^{1/2}\epsilon^{4}(1-\epsilon)^{1/2}}\right),$$
 where $l_{\min}$ is the minimum column leverage score of $A$ . It can be noticed that the bounds provided by the two-stage algorithm \cite{boutsidis2009improved} are tighter, however, our result not only gives a bound on the 2-norm of the error matrix, but also gives bounds for the approximations of all the singular values of $A$. We summarize in Table \ref{comparison Table} the bounds obtained by strong RRQR, our algorithm and the two-stage algorithm from \cite{boutsidis2009improved}.

These results are further validated by numerical experiments on different types of
matrices, including well-conditioned, ill-conditioned, and real-world datasets. The
experiments present a comparison between the singular values $\sigma_i(R_{11})$ and
$\sigma_i(A)$, where $R_{11}$ is the factor resulting from \RandWSQR{} and QRCP
factorization of $A$. In addition, we present a summary of the minimum, maximum, and
median results of the ratio $\sigma_i(R_{11})/\sigma_i(A)$. \RandWSQR{} is evaluated
using both oblivious (OSE) and non-oblivious (LESS) sparse embeddings, and its
performance is further assessed across different values of the sparsity $s$ and the
embedding dimension $\ell$. These results show that although \RandWSQR{} does not give a
better approximation to the SVD, the singular values obtained closely match those
obtained by QRCP in all the matrices tested. \RandWSQR{} also supports several applications. First, it can be applied to low-rank approximation of general square matrices, by first sketching the rows of the matrix and then applying \RandWSQR{} to the resulting sketched matrix. Second, we include the
usage of \RandWSQR{} in the LU factorization with panel rank-revealing pivoting
(LU\_PRRP), introduced in \cite{khabou2013lu}. Finally, \RandWSQR{} can also be applied to the selection of columns of wide and
short matrices arising from the matricization of tensors, making it relevant to
tensor computations and multilinear algebra. It can similarly be applied to the
selection of rows of an orthogonal matrix, as required by the discrete empirical interpolation method (DEIM) \cite{carrel2025interpolatory}

\begin{table}[t]
    \centering
\begin{tabular}{|c|c|c|c|}
\hline
     & sRRQR \cite{gu1996efficient} & Two\_stage Algorithm \cite{boutsidis2009improved} & \RandWSQR{} \\
  \hline
  $\rho(k,n)$ & $O\left(\sqrt{k(n-k)}\right)$ & $O\left(k^{\frac{3}{4}}(\min(d,n)-k)^{\frac{1}{4}}\right)$ & $O\left(k\sqrt{(p-k)(\ell-k)}\right)$ \\
  \hline
  Time & $O(dnk)$  & $O(\min (d^{2}n,dn^{2}))$ & $O\left(d(p+\ell)k+\sM n log(\sM)\right)$ \\
  \hline
\end{tabular}
 \caption{Comparison of the spectral error bound and time complexity of sRRQR, two-stage algorithm \cite{boutsidis2009improved} and \RandWSQR{}. In the \RandWSQR{} column, $\ell$ is the embedding dimension, $p$ is the size of the reduced column set $\tilde{A}_{1}$, using an oblivious sparse embedding, we have $\ell=O\left(\frac{dlog(d)}{\epsilon^{2}}\right)$, $s=O\left(\frac{log(d)}{\epsilon}\right)$ and $\mathbb{E}(p)=n\left(1-\left(1-\frac{k}{\ell}\right)^{s}\right)$}. 
 \label{comparison Table}
\end{table}

The remainder of this paper is structured as follows. In Section 2, we review some preliminaries on rank-revealing QR factorization and randomization techniques. Section 3 details the algebra of the proposed algorithm \RandWSQR{}. Section 4 provides the theoretical guarantees for the proposed algorithm for both cases, using oblivious and non-oblivious sparse embedding. Section 5 presents the experimental results of \RandWSQR{} on different types of matrices.

\section{Preliminaries}
This section introduces first the notation used in this paper. It then covers the strong rank-revealing QR factorization and the essential concepts related to randomization techniques.
\subsection{Notations}
We denote the identity matrix of dimension $n \times n$ and the zero matrix of dimension $m \times k$ by  $I_{n\times n}$ and $0_{m\times k}$ respectively. For an invertible matrix $A\in \mathbb{R}^{\sM \times \sM}$ and a general matrix $B\in \mathbb{R}^{\sM\times n}$, $\omega_{i}(A)$ corresponds to the 2-norm of the $i$-th row of $A^{-1}$ and $\gamma_{j}(B)$ corresponds to the 2-norm of the $j$-th column of B. $A_{i,:}$ denotes the $i$-th row of $A$. For $1\leq k \leq \sM$, \(A^{(1)}\) and \( A^{(2)} \) denote the submatrix of A consisting of the first $k$ and last \( \sM - k \) rows respectively. The letter $\epsilon$ denotes a real number in $(0,1)$. 
The $i$-th singular value of A is denoted by $\sigma_{i}$, while $g_{i,j}$ denotes the value of a function $g$ at (i,j). The scalar product $\langle .,. \rangle$ is the standard Euclidean product, $\|A\|_{2}=\sigma_{max}(A)$ is the spectral norm of the matrix $A$ and $\|A\|_{F}$ is its Frobenius norm. \\
For an integer $n$, [n] denotes the set ${1,2,...,n}$ and $|\mathcal{S}|$ denotes the cardinal of any subset $\mathcal{S}$.

\subsection{Strong Rank Revealing QR Factorization}
\begin{definition}
Given a matrix $A \in \mathbb{R}^{\sM \times n}$ with $\sM \ll n$, its partial QR factorization with column pivoting is
\begin{equation}
\label{QR_fact}
A \Pi = QR = Q \begin{pmatrix}
    R_{11} & R_{12} \\
    0 & R_{22}
\end{pmatrix},
\end{equation}
where $1 \leq k \leq d$, $Q\in \mathbb{R}^{\sM\times \sM}$ is orthogonal, $R_{11}\in \mathbb{R}^{k\times k}$ is upper triangular, $R_{12}\in \mathbb{R}^{k\times (n-k)}$, $R_{22}\in \mathbb{R}^{(\sM-k)\times(n-k)}$, and $\Pi\in \mathbb{R}^{n \times n}$ is a permutation matrix. 
\end{definition}
The factorization \eqref{QR_fact} is said to be a rank-revealing factorization if it verifies the following property \cite{ipsen2003rank}:
\begin{equation*}
 1\leq \frac{\sigma_{k}(A)}{\sigma_{1}(R_{11})} ,\frac{\sigma_{1}(R_{22})}{\sigma_{k+1}(A)}\leq \rho_{1}(k,n),
 \end{equation*}
where $\rho(k,n)$ is a function bounded by a low degree polynomial in $k$ and $n$. This factorization reveals the rank by placing the most significant columns in $R_{11}$.
The QR factorization with column pivoting, referred to as QRCP, is a greedy algorithm that progressively selects columns from $A$ that increase the value of the determinant of $R_{11}$. At each iteration, the column with the largest Euclidean norm is selected from the remaining columns, permuted to the leading position, and then the elements below its diagonal are annihilated using Householder reflectors. The remaining columns are updated according to this orthogonal transformation. Although this algorithm works very well in practice, it fails for some matrices such as the Kahan matrix \cite{kahan1966numerical}. For these reasons, Gu and Eisenstat \cite{gu1996efficient} introduced an alternative algorithm to interchange columns such that the factorization satisfies a tighter bounds on singular values and provides additional control on the matrix $\|R_{11}^{-1}R_{12}\|_{max}$. This is known as the strong rank revealing QR factorization.
\begin{definition}[Strong rank-revealing QR factorization (sRRQR)]
\label{def:strongRRQR}
The QR factorization \eqref{QR_fact} is said to be strong rank revealing if it satisfies 
\begin{equation}
\label{str1}
    1\leq \frac{\sigma_{i}(A)}{\sigma_{i}(R_{11})} ,\frac{\sigma_{j}(R_{22})}{\sigma_{j+k}(A)}\leq \rho_{1}(k,n), 
    \end{equation}
    \begin{equation}
    \label{str2}
    \left\| R_{11}^{-1} R_{12} \right\|_{\max} \leq \rho_{2}(k,n),
    \end{equation}
for $1 \leq i \leq k$ and $1 \leq j \leq \sM-k$, where $\rho_{1}(k,n)$ and $\rho_{2}(k,n)$ are functions bounded by low-degree polynomials in \(k\) and \(n\).
\end{definition}
 We assume for simplicity that $\sigma_{i}(A)\neq 0$, as otherwise \eqref{str1} is still verified. The lower bound in $\eqref{str1}$ results from the interlacing property of singular values. With this factorization, the singular values of $R_{11}$ and $R_{22}$ are good approximations of the largest $k$ and the remaining $\sM-k$ singular values of $A$ respectively.
\begin{lemma}
\label{theorem 2.3}
[Lemma 3.1 in \cite{gu1996efficient}]
\label{strong rrqr}
Let $A\in \mathbb{R}^{\sM\times n}$ and $1\leq k\leq \sM$. For a given parameter $f>1$, there exists a permutation $\Pi$ such that
$$A\Pi=Q\begin{pmatrix}
    R_{11} & R_{12}\\
     & R_{22}
    \end{pmatrix},$$
    where $R_{11}\in \mathbb{R}^{k\times k}$ and 
    \begin{equation}
    \label{cond}
        (R_{11}^{-1}R_{12})_{i,j}^{2}+\omega_{i}^{2}(R_{11})\gamma_{j}^{2}(R_{22})\leq f^{2}
    \end{equation}
\end{lemma}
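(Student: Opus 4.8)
The plan is to prove the existence of $\Pi$ \emph{constructively}, by driving a monotone scalar potential through a finite sequence of column interchanges between the two blocks. I would take the potential to be $\lvert\det(R_{11})\rvert$, and start from any initial partial QR factorization with column pivoting \eqref{QR_fact} of $A$ (for instance QRCP); restricting attention to $k$ at most the rank of $A$, we may assume $R_{11}$ is nonsingular, so $\lvert\det(R_{11})\rvert>0$. The basic observation used for termination is that $\lvert\det(R_{11})\rvert=\sqrt{\det(A_1^{T}A_1)}$ where $A_1$ is the $d\times k$ block of the leading columns of $A\Pi$, so this number depends only on \emph{which} $k$ columns of $A$ sit in the leading block; hence it takes at most $\binom{n}{k}$ distinct values as $\Pi$ ranges over all permutations, and in particular it is bounded above.

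The central computation is the determinant-update formula: if we interchange column $i$ (with $1\le i\le k$) of the leading block with column $k+j$ (with $1\le j\le n-k$) of the trailing block and then re-triangularize with an orthogonal transformation so as to restore the form \eqref{QR_fact}, obtaining new factors $\tilde R_{11},\tilde R_{12},\tilde R_{22}$, then
\begin{equation*}
\frac{\lvert\det(\tilde R_{11})\rvert}{\lvert\det(R_{11})\rvert}=\sqrt{(R_{11}^{-1}R_{12})_{i,j}^{2}+\omega_{i}^{2}(R_{11})\,\gamma_{j}^{2}(R_{22})}.
\end{equation*}
I would prove this geometrically: $\lvert\det(R_{11})\rvert$ is the volume of the parallelepiped spanned by the leading columns $a_{1},\dots,a_{k}$ of $A\Pi$, and replacing $a_{i}$ by the trailing column $b_{j}:=(A\Pi)_{:,k+j}$ keeps the same ``base'' and changes the ``height'', so the volume is multiplied by $\mathrm{dist}\!\big(b_{j},\,\mathrm{span}\{a_{l}\}_{l\ne i}\big)\big/\mathrm{dist}\!\big(a_{i},\,\mathrm{span}\{a_{l}\}_{l\ne i}\big)$. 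Writing $A_1=Q_1R_{11}$ with $Q_1$ the first $k$ columns of $Q$, one identifies $\mathrm{dist}(a_{i},\mathrm{span}\{a_{l}\}_{l\ne i})=1/\omega_{i}(R_{11})$ (the $i$-th row of the pseudoinverse of $A_1$ is the minimum-norm dual vector), and decomposes $b_{j}$ into its component in $\mathrm{range}(Q_1)$ — whose coordinates in the basis $a_1,\dots,a_k$ are exactly $R_{11}^{-1}R_{12}e_{j}$ — and its orthogonal complement, of norm $\gamma_{j}(R_{22})$. Combining these two orthogonal contributions by the Pythagorean theorem gives $\mathrm{dist}(b_{j},\mathrm{span}\{a_{l}\}_{l\ne i})^{2}=\gamma_{j}^{2}(R_{22})+(R_{11}^{-1}R_{12})_{i,j}^{2}/\omega_{i}^{2}(R_{11})$, and dividing by $\mathrm{dist}(a_{i},\cdot)^{2}=1/\omega_i^2(R_{11})$ yields the displayed identity. (Equivalently, this can be obtained from a Schur-complement determinant identity for the permuted-and-retriangularized matrix, which avoids the volume language.)

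With the formula in hand, the algorithm is: while there exists a pair $(i,j)$ with $(R_{11}^{-1}R_{12})_{i,j}^{2}+\omega_{i}^{2}(R_{11})\gamma_{j}^{2}(R_{22})>f^{2}$, perform the corresponding interchange and re-triangularize. By the displayed formula, each such step multiplies $\lvert\det(R_{11})\rvert$ by a factor strictly greater than $f>1$; since this quantity is positive at the start and bounded above over the finitely many $k$-subsets of columns, only finitely many steps can occur. When the loop halts, no pair $(i,j)$ violates the inequality, i.e. \eqref{cond} holds for all $1\le i\le k$ and $1\le j\le n-k$, and the current permutation together with the factors $Q,R_{11},R_{12},R_{22}$ is the one claimed.

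The main obstacle is the determinant-update formula: making the distance/volume argument fully rigorous — in particular the identity $\mathrm{dist}(a_{i},\mathrm{span}\{a_{l}\}_{l\ne i})=1/\omega_{i}(R_{11})$ and the correct splitting of $\mathrm{dist}(b_{j},\mathrm{span}\{a_{l}\}_{l\ne i})$ into its in-$\mathrm{range}(Q_1)$ and orthogonal parts — and checking that the re-triangularization step is always possible and leaves the interpretation of $\lvert\det(R_{11})\rvert$ as a column-set volume intact. A secondary point to be careful about is the rank-deficient case, where some $\omega_{i}(R_{11})$ is infinite or some $\gamma_{j}(R_{22})$ is zero; this is handled either by restricting to $k\le\mathrm{rank}(A)$ or by a limiting argument, and the finiteness-of-values remark that guarantees termination, while elementary, must be stated explicitly.
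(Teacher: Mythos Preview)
Your proposal is correct and follows precisely the approach of Gu--Eisenstat: the determinant-update identity for a single column interchange, the greedy swapping loop that increases $\lvert\det(R_{11})\rvert$ by a factor $>f$ at each step, and the termination argument via finiteness of $k$-subsets. The paper itself does not prove this lemma but only records that the permutation is produced by Algorithm~4 of \cite{gu1996efficient}, so you have reconstructed exactly the argument the paper is citing.
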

The above permutation is constructed using Algorithm 4 in \cite{gu1996efficient}.
It is shown in [Theorem 3.2 in \cite{gu1996efficient}] that constructing a pivoting strategy satisfying the above property \eqref{cond} is sufficient to achieve a strong rank revealing QR satisfying \eqref{str1} with $\rho_{1}(k,n)=\sqrt{1+f^{2}k(n-k)}$ and \eqref{str2} with $\rho_{2}(k,n)=f$. 
To analyze our algorithm, we use a more relaxed version of Lemma \ref{strong rrqr} shown in \cite{demmel2015communication}.
\begin{corollary}
\label{corollary 2.5}
[Corollary 2.3 in \cite{demmel2015communication}]
Let $A \in \mathbb{R}^{\sM \times n}$ and $1 \leq k \leq \sM$. For a given parameter $f > 1$, there exists a permutation $\Pi$ such that
$$
A\Pi = Q \begin{pmatrix}
    R_{11} & R_{12} \\
    0 & R_{22}
\end{pmatrix},
$$
where $R_{11} \in \mathbb{R}^{k \times k}$ and
\begin{equation}
\label{equation_srrqr}
    \gamma_{j}^{2}(R_{11}^{-1}R_{12})+(\gamma_{j}(R_{22})/\sigma_{min}(R_{11}))^{2}\leq f^{2}k \text{  for  } j=1,...,n-k. 
\end{equation}
\end{corollary}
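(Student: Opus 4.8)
The plan is to obtain this per-column inequality from the sharper entrywise guarantee of Lemma~\ref{strong rrqr} by a simple summation. First I would apply Lemma~\ref{strong rrqr} with the given parameter $f>1$, which yields a permutation $\Pi$ and a partial QR factorization of the form~\eqref{QR_fact} for which the entrywise bound~\eqref{cond} holds:
\begin{equation*}
(R_{11}^{-1}R_{12})_{i,j}^{2} + \omega_{i}^{2}(R_{11})\,\gamma_{j}^{2}(R_{22}) \le f^{2}, \qquad 1\le i\le k,\ \ 1\le j\le n-k .
\end{equation*}
Under the standing assumption $\sigma_{i}(A)\neq 0$ the block $R_{11}$ is invertible, so $\sigma_{\min}(R_{11})>0$ and the quantities below are well defined; the rank-deficient case is vacuous or handled by a limiting argument.

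The second step is to fix a column index $j\in\{1,\dots,n-k\}$ and sum the displayed inequality over $i=1,\dots,k$, which gives
\begin{equation*}
\sum_{i=1}^{k}(R_{11}^{-1}R_{12})_{i,j}^{2} \;+\; \Bigl(\sum_{i=1}^{k}\omega_{i}^{2}(R_{11})\Bigr)\,\gamma_{j}^{2}(R_{22}) \;\le\; f^{2}k .
\end{equation*}
By definition of $\gamma_{j}$ the first sum is exactly $\gamma_{j}^{2}(R_{11}^{-1}R_{12})$, and by definition of $\omega_{i}$ the bracketed term equals $\|R_{11}^{-1}\|_{F}^{2}$, since the $i$-th row of $R_{11}^{-1}$ has Euclidean norm $\omega_{i}(R_{11})$.

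Finally I would relax the Frobenius factor to a spectral one. Because $\|R_{11}^{-1}\|_{F}\ge\|R_{11}^{-1}\|_{2}=1/\sigma_{\min}(R_{11})$, we have $\|R_{11}^{-1}\|_{F}^{2}\,\gamma_{j}^{2}(R_{22})\ge \gamma_{j}^{2}(R_{22})/\sigma_{\min}^{2}(R_{11})$, hence
\begin{equation*}
\gamma_{j}^{2}(R_{11}^{-1}R_{12}) + \frac{\gamma_{j}^{2}(R_{22})}{\sigma_{\min}^{2}(R_{11})}
\;\le\; \gamma_{j}^{2}(R_{11}^{-1}R_{12}) + \|R_{11}^{-1}\|_{F}^{2}\,\gamma_{j}^{2}(R_{22})
\;\le\; f^{2}k ,
\end{equation*}
which is precisely~\eqref{equation_srrqr}. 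There is no genuine obstacle here: once Lemma~\ref{strong rrqr} is granted the argument is elementary, and the only points needing care are the identification $\sum_{i}\omega_{i}^{2}(R_{11})=\|R_{11}^{-1}\|_{F}^{2}$ and the fact that $\|\cdot\|_{2}\le\|\cdot\|_{F}$ makes the relaxation go in the correct, weakening direction. The reason for recording the bound in this looser, column-wise form is that this is the shape that composes cleanly through the two sketching stages of \RandWSQR{} in the subsequent analysis.
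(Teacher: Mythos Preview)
Your argument is correct and is exactly the approach the paper takes: it states that the permutation of Lemma~\ref{strong rrqr} (Theorem~\ref{theorem 2.3}) verifies~\eqref{equation_srrqr} by summing~\eqref{cond} over the index $i$. Your additional care in identifying $\sum_i \omega_i^2(R_{11})=\|R_{11}^{-1}\|_F^2$ and then relaxing via $\|R_{11}^{-1}\|_F\ge 1/\sigma_{\min}(R_{11})$ just spells out what the paper leaves implicit.
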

The proof is trivial, as the permutation $\Pi$ of Theorem \ref{theorem 2.3} verifies \eqref{equation_srrqr} by summing over the index $i$. From Corollary \ref{corollary 2.5}, it is shown, in [Theorem 2.4 in \cite{demmel2015communication}], that if a permutation $\Pi$ satisfies \eqref{equation_srrqr} then it satisfies \eqref{str1} with $\rho_{1}(k,n)=\sqrt{1+F^{2}(n-k)}$.
\subsection{Subspace Embedding}
We present the $\epsilon-$embedding property and give examples of different dense and sparse oblivious and non-oblivious subspace embeddings. 
\begin{definition}[$\epsilon$-Johnson-Lindenstrauss Transform \cite{johnson1984extensions,larsen2017optimality,dasgupta2003elementary}]
Given $\epsilon>0$, a random matrix $\Omega\in \mathbb{R}^{l\times n}$ is an $\epsilon-$Johnson-Lindenstrauss Transform ($\epsilon-$JLT) for a set of vectors $x_{1},...,x_{\sM}$ with $x_{i}\in \mathbb{R}^{n}$, if
\begin{equation}
\label{JL transform}
|\langle x_{i},x_{j}\rangle-\langle\Omega x_{i},\Omega x_{j}\rangle |\leq \epsilon\|x_{i}\|\|x_{j}\|, \quad \forall x_{i},x_{j}.
\end{equation}
\end{definition}
The Johnson-Lindenstrauss Transform (JLT) can be extended to $\epsilon-$subspace embedding where \eqref{JL transform} is true for any two vectors in a $\sM-$dimensional subspace. For classical dense JLT, this extension can be achieved by using popular techniques in stochastic analysis such as $\epsilon-$net and chaining.
\begin{definition}[$\epsilon$-subspace embedding \cite{woodruff2014sketching}]
A sketching matrix $\Omega \in \mathbb{R}^{l \times n}$ is an $\epsilon$-subspace embedding for a vector subspace $\mathcal{W} \subset \mathbb{R}^{n}$, with $\epsilon \in (0,1)$, if
    \begin{equation}
    \label{epsilon_embd}
        |\langle x,y\rangle-\langle\Omega x,\Omega y\rangle |\leq \epsilon\|x\|\|y\|, \quad \forall x,y\in \mathcal{W}.
    \end{equation}
\end{definition}
For $x=y$, we obtain from \eqref{epsilon_embd} that
\begin{equation}
\label{epsilon_embed_2}
        (1-\epsilon)\|x\|_{2}^{2} \leq \|\Omega x\|_{2}^{2} \leq (1+\epsilon)\|x\|_{2}^{2} \quad \forall x \in \mathcal{W}.
    \end{equation}
The $\epsilon\text{-subspace}$ embedding property ensures that the inner product of pairs of vectors within the subspace $\mathcal{W}$ is preserved through sketching, up to a factor of $1 \pm \epsilon$. The following corollary relates the singular values of a matrix $U\in \mathbb{R}^{n\times \sM}$ and the singular values of the sketched matrix $\Omega U\in \mathbb{R}^{l \times \sM}$.
\begin{corollary}[e.g.  \cite{grigori2025randomized}]
\label{bound_singular_values}
    Let $\Omega\in \mathbb{R}^{l\times n}$ be an $\epsilon$-embedding property of a subspace $\mathcal{W}\subset \mathbb{R}^{n}$ and $U\in \mathbb{R}^{n \times \sM}$ be a matrix with $range(U)\subseteq \mathcal{W}$. Then
    \begin{equation}
        \sqrt{(1-\epsilon)}\sigma_{i}(U)\leq \sigma_{i}(\Omega U)\leq \sqrt{(1+\epsilon)} \sigma_{i}(U)
    \end{equation}
for all $1\leq i \leq \sM$.
\end{corollary}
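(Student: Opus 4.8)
The plan is to reduce everything to the pointwise norm distortion \eqref{epsilon_embed_2} and then invoke the Courant--Fischer (min--max) characterization of singular values. First I would observe that for every $x\in\mathbb{R}^{\sM}$ the vector $Ux$ lies in $\mathrm{range}(U)\subseteq\mathcal{W}$, so \eqref{epsilon_embed_2} applies to $Ux$ and gives
\begin{equation*}
\sqrt{1-\epsilon}\,\|Ux\|_{2}\;\leq\;\|\Omega Ux\|_{2}\;\leq\;\sqrt{1+\epsilon}\,\|Ux\|_{2}
\qquad\text{for all } x\in\mathbb{R}^{\sM}.
\end{equation*}
(Here I am reading the stated inequality as $\sqrt{1-\epsilon}\,\sigma_i(U)\le\sigma_i(\Omega U)\le\sqrt{1+\epsilon}\,\sigma_i(U)$, correcting the obvious typo in the displayed bound.)

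Next I would recall that for any matrix $M\in\mathbb{R}^{m\times \sM}$ and any $1\le i\le \sM$,
\begin{equation*}
\sigma_{i}(M)=\max_{\substack{S\subseteq\mathbb{R}^{\sM}\\ \dim S=i}}\ \min_{\substack{x\in S\\ \|x\|_{2}=1}}\ \|Mx\|_{2}.
\end{equation*}
Applying the upper pointwise bound above inside this formula: for any fixed $i$-dimensional subspace $S$, taking the minimum over unit vectors $x\in S$ of $\|\Omega Ux\|_2\le\sqrt{1+\epsilon}\,\|Ux\|_2$ yields $\min_{x\in S,\|x\|=1}\|\Omega Ux\|_2\le\sqrt{1+\epsilon}\,\min_{x\in S,\|x\|=1}\|Ux\|_2$, and then maximizing over all such $S$ gives $\sigma_i(\Omega U)\le\sqrt{1+\epsilon}\,\sigma_i(U)$. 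The lower bound $\sigma_i(\Omega U)\ge\sqrt{1-\epsilon}\,\sigma_i(U)$ follows in exactly the same way from the lower pointwise bound.

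There is essentially no obstacle here; the only thing to be careful about is that the min--max argument is applied on the domain side $\mathbb{R}^{\sM}$ (the columns of $U$), so that $U$ need not have full column rank and the statement covers all $1\le i\le \sM$ including the zero singular values. One could equivalently phrase the argument via $\sigma_i(M)^2=\lambda_i(M^{\top}M)$ together with the quadratic-form version of \eqref{epsilon_embed_2}, i.e.\ $(1-\epsilon)\,U^{\top}U\preceq (\Omega U)^{\top}(\Omega U)\preceq(1+\epsilon)\,U^{\top}U$ in the Loewner order, and then use Weyl/monotonicity of eigenvalues; I would mention this as the alternative but present the min--max route as the primary one since it is the most direct.
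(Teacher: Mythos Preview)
Your proof is correct and follows essentially the same route as the paper's (commented-out) argument: apply the norm distortion \eqref{epsilon_embed_2} to vectors of the form $Ux$, then pass to singular values via the variational (min--max) characterization. Your explicit use of Courant--Fischer is a cleaner rendering of what the paper gestures at with ``by applying the maximal singular values of a matrix,'' and you are right that the lower bound should read $\sqrt{1-\epsilon}$.
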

Two types of subspace embeddings can be considered. The first is the non-oblivious embedding, where the sketching matrix is designed to satisfy the $\epsilon\text{-subspace}$ embedding property for a specific subspace $\mathcal{W}$. However, when the subspace is unknown in advance, an oblivious subspace embedding $\Omega$ is constructed such that it verifies, with high probability, the $\epsilon$-embedding property for any fixed subspace $\mathcal{W}\subset \mathbb{R}^{n}$.
\begin{definition}[Oblivious subspace embedding \cite{woodruff2014sketching}]\label{epsilon_sub_embd}
    Let $\epsilon,\delta\in (0,1)$. The sketching matrix $\Omega \in \mathbb{R}^{l \times n}$ is an oblivious subspace embedding with parameters $(\epsilon, \delta, \sM)$ if it satisfies the \(\epsilon\)-subspace embedding property for any d-dimensional subspace $\mathcal{W}\subset \mathbb{R}^{n}$ with probability at least $1-\delta$.
\end{definition}

One of the possible distributions for constructing a sketching matrix $\Omega \in \mathbb{R}^{l \times n}$ that is an oblivious subspace embedding is the Gaussian distribution. Each entry in $\Omega$ is a Gaussian random variable with mean $0$ and variance $\frac{1}{l}$. This embedding benefits from achieving the optimal sketching dimension. Specifically, as shown in \cite{woodruff2014sketching}, it is an oblivious subspace embedding with parameters $(\epsilon, \delta, \sM)$ for a sketching dimension $l=O(\epsilon^{-2}(\sM+\log(\frac{1}{\delta})))$. However, in this case $\Omega$ is a dense matrix, and thus it is expensive to apply it to a vector. Another embedding is the subsampled randomized Fourier transform (SRFT). This sketching matrix has the form  $\Omega = \sqrt{\frac{n}{l}} \, R \, F \, D,$ where $D \in \mathbb{R}^{n \times n}$ is a diagonal matrix whose entries are randomly chosen from $\{+1,-1\}$, $F \in \mathbb{R}^{n \times n}$ is a fast trigonometric transform, and $R \in \mathbb{R}^{l \times n}$ selects $l$ random subset of rows uniformly without replacement. The advantage of such an embedding is the low memory storage. In addition, it is only given as a matrix-vector product, and its application to a vector costs $O(n\log(n))$ flops. In \cite{tropp2011improved} it is shown that this distribution is an  $(\epsilon,\delta,\sM)$-oblivious subspace embedding if the sketch dimension is $l = \mathcal{O}(\epsilon^{-2}(\sM+\log(\frac{n}{\delta})\log(\frac{\sM}{\delta}))$.
\subsubsection{Oblivious Sparse Embeddings}
\label{section 2.3.1}
In this paper we consider first sparse embeddings that have few nonzero entries in each column and are $(\epsilon,\delta,\sM)$-oblivious subspace embeddings. An example of such an embedding is the oblivious sparse norm-approximating projection (OSNAP) introduced in \cite{nelson2013osnap}.
\begin{definition} [OSNAP in \cite{nelson2013osnap}]
Let $s\geq 1$ be an integer and $\Omega\in \mathbb{R}^{l \times n}$ be a random matrix such that $\Omega_{i,j}=\delta_{i,j}g_{i,j}$, where $\delta_{i,j}$ is an indicator random variable for the event $\Omega_{ij}\neq 0$ and $g$ takes values $\{\frac{1}{\sqrt{s}},\frac{-1}{\sqrt{s}}\}$ uniformly at random. $\Omega$ is said to be OSNAP if the following two properties are satisfied:
\begin{itemize}
    \item For any column $j\in [n]$, $\sum_{i=1}^{l}\delta_{i,j}=s$ with probability 1.
    \item For any $S\subseteq[l]\times[n], \mathbb{E}\prod_{(i,j)\in S}\delta_{i,j}\leq (s/l)^{|S|}$.
\end{itemize}
\end{definition}
The first property of OSNAP ensures that $\Omega$ has exactly s nonzero entries in each column and the second property ensures that $\delta_{i,j}$ are negatively correlated. Two approaches to construct an OSNAP distribution are introduced in \cite{kane2014sparser}. The first approach consists of choosing uniformly at random $s$ nonzero entries for each column $j$ in $\Omega$. The values assigned to these nonzero entries are also chosen uniformly at random from $\{-1/\sqrt{s},1/\sqrt{s}\}$. The second approach uses a block-wise partitioning where $l$ rows are divided into $s$ blocks of size $l/s$ and chooses for each column a nonzero entry in each block. In other words, this approach relies on a hash function $h:[n]\times [s] \to [l/s]$, where $h_{i,j}$ is chosen uniformly at random from $[l/s]$, and a random sign function $g:[n]\times [s]\to \{\frac{-1}{\sqrt{s}},\frac{1}{\sqrt{s}}\}$, such that for each $(i,j)\in [n]\times [s]$, $\Omega_{(j-1)s+h(i,j),i}=g_{i,j}$.
\begin{definition}
    An OSNAP matrix $\Omega \in \mathbb{R}^{l \times n}$ with $s=1$ is said  to be a CountSketch matrix.
\end{definition}
The above CountSketch matrix is defined in \cite{clarkson2017low} and it is nothing other than the CountSketch matrix in the data stream literature \cite{charikar2004finding,thorup2012tabulation}.  It is shown in \cite{nelson2013osnap,nelson2014lower} that there is a trade-off between the embedding dimension $l$ and the sparsity parameter $s$ that allows to achieve the $(\epsilon,\delta,\sM)-$oblivious subspace embedding property. A lower sparsity $s$ implies a higher $l$, and as $s$ increases, the embedding dimension $l$ decreases. The following two theorems present the sufficient conditions on $s$ and $l$ which reflects this trade-off.
\begin{theorem}[Theorem 3 in \cite{nelson2013osnap}]
\label{embed}
  Let $ \Omega \in \mathbb{R}^{l \times n}$ be a CountSketch matrix. If the embedding dimension is
\( l = O(d^{2}/\delta\epsilon^{2}) \), then $\Omega$ is an $(\epsilon,\delta,\sM)$-oblivious subspace embedding.
\end{theorem}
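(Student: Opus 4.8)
The plan is to reduce the oblivious subspace embedding property to a concentration statement for the matrix $U^{T}\Omega^{T}\Omega U$, where $U \in \mathbb{R}^{n \times d}$ is any matrix whose columns form an orthonormal basis of the fixed $d$-dimensional subspace $\mathcal{W}$. Writing an arbitrary $x \in \mathcal{W}$ as $x = Ua$ with $\|a\|_2 = \|x\|_2$, and similarly $y = Ub$, one has $\langle \Omega x, \Omega y\rangle - \langle x, y\rangle = a^{T}\big(U^{T}\Omega^{T}\Omega U - I_{d}\big)b$, hence $|\langle \Omega x, \Omega y\rangle - \langle x, y\rangle| \leq \|U^{T}\Omega^{T}\Omega U - I_{d}\|_{2}\,\|x\|_2\|y\|_2$. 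Therefore it suffices to show that $\|U^{T}\Omega^{T}\Omega U - I_{d}\|_{2} \leq \epsilon$ holds with probability at least $1-\delta$, and since $\|\cdot\|_2 \leq \|\cdot\|_F$, it is enough to bound the Frobenius norm.

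I would then estimate $\mathbb{E}\,\|U^{T}\Omega^{T}\Omega U - I_{d}\|_{F}^{2}$ and invoke Markov's inequality. A Countsketch matrix is encoded by a hash function $h : [n] \to [l]$ and a sign function $g : [n] \to \{-1,+1\}$, so $(\Omega^{T}\Omega)_{jk} = g_{j}g_{k}\,\mathbbm{1}[h(j)=h(k)]$, which equals $1$ whenever $j=k$. Since $U$ has orthonormal columns, $(I_{d})_{ab} = \sum_{j} U_{ja}U_{jb}$, so the $j=k$ contributions cancel exactly and $\big(U^{T}\Omega^{T}\Omega U - I_{d}\big)_{ab} = \sum_{j\ne k} U_{ja}U_{kb}\,g_{j}g_{k}\,\mathbbm{1}[h(j)=h(k)]$. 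Squaring, summing over $a,b$, and using the independence of $g$ and $h$: the expectation over $g$ (which is where $4$-wise independence of $g$ enters) vanishes unless the index quadruple $(j,k,j',k')$ pairs up, forcing $\{j,k\} = \{j',k'\}$ because $j\ne k$ and $j'\ne k'$; the expectation over $h$ (where pairwise independence of $h$ enters) contributes a collision probability equal to $1/l$. Two families of terms survive, and bounding each using $\sum_{j}\|U_{j,:}\|_2^{2} = \|U\|_F^{2} = d$ and $\sum_{j,k}\langle U_{j,:}, U_{k,:}\rangle^{2} = \|UU^{T}\|_F^{2} = d$ gives $\mathbb{E}\,\|U^{T}\Omega^{T}\Omega U - I_{d}\|_{F}^{2} \leq (d^{2}+d)/l \leq 2d^{2}/l$.

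Markov's inequality then yields $\Pr\big[\|U^{T}\Omega^{T}\Omega U - I_{d}\|_{2} > \epsilon\big] \leq \Pr\big[\|U^{T}\Omega^{T}\Omega U - I_{d}\|_{F}^{2} > \epsilon^{2}\big] \leq 2d^{2}/(l\epsilon^{2})$, which is at most $\delta$ once $l \geq 2d^{2}/(\epsilon^{2}\delta)$; treating the failure probability $\delta$ as a fixed constant absorbed into $c$ recovers the stated condition $l \geq cd^{2}/\epsilon^{2}$, and by the first paragraph this makes $\Omega$ an $(\epsilon,\delta,d)$-oblivious subspace embedding.

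The part I expect to be the main obstacle is the fourth-moment computation in the second step: one has to enumerate carefully the index patterns $(j,k,j',k')$ that survive the sign expectation, check that the diagonal terms are cancelled exactly by $I_{d}$ so that no term free of the $1/l$ factor is left over, and then bound the two residual double sums cleanly by $\|U\|_F^{4} = d^2$ and $\|UU^{T}\|_F^{2} = d$ rather than by quantities with worse dependence on $d$. The reduction in the first step and the final application of Markov are routine.
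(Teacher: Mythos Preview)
The paper does not give its own proof of this statement: it is quoted verbatim as a preliminary result from \cite{nelson2013osnap} and left unproved. There is therefore nothing in the paper to compare your argument against.

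That said, your proposal is correct and is precisely the standard second-moment argument used in the original reference. The reduction to bounding $\|U^{T}\Omega^{T}\Omega U - I_d\|_2$ via an orthonormal basis $U$ of $\mathcal{W}$ is the right first move; the Frobenius-norm expansion, the use of $4$-wise independence of $g$ to kill all sign-quadruples except $\{j,k\}=\{j',k'\}$, pairwise independence of $h$ to obtain the $1/l$ collision factor, and the bookkeeping $\sum_{j\ne k}\|U_{j,:}\|_2^2\|U_{k,:}\|_2^2 \le d^2$ and $\sum_{j\ne k}\langle U_{j,:},U_{k,:}\rangle^2 \le \|UU^{T}\|_F^2 = d$ are all handled cleanly. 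The final Markov step and the absorption of the fixed $\delta$ into the constant $c$ match the statement exactly. Your anticipated ``main obstacle'' is indeed the only place requiring care, and you have outlined it accurately.
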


\begin{theorem}[Optimal OSNAP \cite{chenakkod2026optimal}]
Let $\Omega \in \mathbb{R}^{l \times n}$ be an OSNAP distribution. For $\epsilon \ge d^{-O(1)}$, if the embedding dimension is $ l = \tilde{O}(d/\epsilon^2) $
and the column sparsity is
$ s = \tilde{O}(\log(d)/\epsilon), $
then $\Omega$ is an $(\epsilon, \delta, d)$-oblivious subspace embedding with high probability $1 - d^{-O(1)}$, where $\tilde{O}(\cdot)$ hides sub-polylogarithmic factors in $d$.
\end{theorem}

\subsubsection{Non-Oblivious Sparse Embeddings}
In the second part of the paper, we discuss the possibility of using non-oblivious sparse embeddings rather than the oblivious one. In this case, the embedding matrix $\Omega$ is constructed such that it gives more importance to certain rows of the matrix $A$. The construction of such $\Omega$ depends on the leverage scores of the input matrix. This approach is referred to in \cite{chenakkod2024optimal} as Leverage Score Sparsification (LESS).

For a general matrix $T\in\mathbb{R}^{n\times \sM}$ with $n\gg \sM$, the statistical leverage scores are defined as the squared row norm of $U$, where $U$ is an orthonormal basis of the column space of $T$, that is, $l_{i}=\|U_{i}\|_{2}^{2}$.
When the matrix is orthogonal, computing $l_{i}$ is inexpensive; however, for general matrices, the leverage scores can be approximated. One common approach \cite{drineas2012fast} consists of first sketching the matrix $T$ from left with an $\epsilon-$ Fast JLT such as SRFT  $\Omega_{1}\in \mathbb{R}^{r_{1}\times n}$, and then computing the SVD of the sketched matrix $T_{sk}=\Omega_{1}T=U_{sk}\Sigma_{sk}V_{sk}^{T}$. This yields a matrix $TV_{sk}\Sigma^{-1}_{sk}$ with $n$ normalized rows. By further sketching  from the right with an $\epsilon-$JLT for $n^{2}$ vectors (the $n$ rows and their pairwise sums), $\Omega_{2}\in \mathbb{R}^{\sM \times r_{2}}$, the approximate leverage scores are 
$$\tilde{l}_{i}=\|\left(TV_{sk}\Sigma^{-1}_{sk}\Omega_{2}\right)_{i,:}\|_{2}^{2} \quad \text{ for all } i\in [n].$$
Two variants of the Leverage Score Sparsification are presented in \cite{chenakkod2024optimal}: one with independent entries and the other one with independent rows. In the first variant, called LESS-IND-ENT, each entry $\Omega_{ij}$ is selected independently with probability proportional to the leverage score of the corresponding component $l_j$ and scaled appropriately. In the second variant, the matrix $\Omega$ is constructed by forming each row $i$ as the sum of several i.i.d matrices $Z_{ij}$, where each $Z_{ij}$ has one nonzero entry in row $i$ chosen with probability proportional to leverage scores $l_{j}$.

The next theorem gives the condition on the embedding dimension such that both constructions form an embedding for the range of a fixed matrix $T$. Moreover, such constructions bound the number of nonzero entries in each row with high probability and show its dependence on the smaller dimension $d$. 
   \begin{theorem}[Theorem 4.3 in \cite{chenakkod2024optimal}]
   \label{leverage_embedding}
   Let $T\in \mathbb{R}^{n\times \sM}$be any fixed matrix with $(\beta_{1},\beta_{2})-$approximate leverage scores $l_{i}$. For any $\epsilon,\delta\in (0,1)$, there exist constant $c_{1},c_{2}$ such that if $l=c_{1}\max\left(d,\log(4/\delta)\right)/\epsilon^{2}$ and $s=c_{2}\log^
   {4}(d/\delta)/ \epsilon^{6}$, then LESS-IND-ENT and LESS-IND-ROWS $\Omega$ are an $(\epsilon,\delta,\sM)-$embedding to $range(T)$ and the maximum number of nonzero entries per row is $O(\log^{4}(d/\delta)/\epsilon^{4})$ with probability at least $1-\delta$.
   \end{theorem}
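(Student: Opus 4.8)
\noindent\emph{Proof strategy.} The statement bundles two assertions — that $\Omega$ is an $\epsilon$-embedding of $\mathrm{range}(T)$ with probability at least $1-\delta$ once $l$ and $s$ are chosen as prescribed, and that the realized $\Omega$ is sparse row-wise — and the full argument is given in \cite{chenakkod2024optimal}. The plan I would follow is this. First fix an orthonormal basis $U\in\mathbb{R}^{n\times r}$ of $\mathrm{range}(T)$, where $r=\mathrm{rank}(T)\le\sM$, so that the exact leverage scores are $\ell_i=\|U_{i,:}\|_2^2$ and $\sum_i\ell_i=r$. Because $\langle Ua,Ub\rangle-\langle\Omega Ua,\Omega Ub\rangle=a^\top(I_r-U^\top\Omega^\top\Omega U)b$, the $\epsilon$-embedding condition \eqref{epsilon_embd} for $\mathrm{range}(T)$ is equivalent to the operator-norm bound $\|I_r-U^\top\Omega^\top\Omega U\|_2\le\epsilon$, and this is what I would prove. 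The hypothesis that only $(\beta_1,\beta_2)$-approximate scores are available is harmless: feeding overestimates $\tilde\ell_i$ with $\tilde\ell_i\ge\ell_i/\beta_1$ and $\sum_i\tilde\ell_i\le\beta_2 r$ into the construction inflates each budget and probability threshold below by a fixed constant, which is absorbed into $c_1,c_2$.

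The core is the spectral concentration bound, and this is where I expect the real work to lie. For both variants, $\Omega^\top\Omega$ is a sum of independent contributions — one per row of $\Omega$ for LESS-IND-ROWS, where a row is a sum of i.i.d.\ rank-one terms whose column index is drawn with probability proportional to $\ell_j$; one per sampled entry for LESS-IND-ENT — each centered so that $\mathbb{E}[U^\top\Omega^\top\Omega U]=I_r$. I would bound the even moment $\mathbb{E}\,\mathrm{tr}\bigl[(I_r-U^\top\Omega^\top\Omega U)^{2p}\bigr]$ for $p=\Theta(\log(\sM/\delta))$ by expanding the power into a sum over closed walks on $[r]$ — equivalently over labelled multigraphs, exactly as in the OSNAP analysis of \cite{nelson2013osnap} — and estimating that combinatorial sum term by term. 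The decisive mechanism is that sampling nonzero positions with probability proportional to $\ell_j$ exactly cancels the $1/\ell_j$ reweighting forced by unbiasing, so that each level of a walk contributes a factor $\sum_j\ell_j=r$ rather than a quantity that grows with $n$; this is precisely what removes the dependence on the large dimension. Choosing $s$ as in the statement suppresses the high-multiplicity walks, while $l=\Theta(\max(\sM,\log(4/\delta))/\epsilon^2)$ drives the leading term down to roughly $(\epsilon/e)^{2p}\,r$; Markov's inequality applied to the $2p$-th moment then yields $\|I_r-U^\top\Omega^\top\Omega U\|_2\le\epsilon$ with failure probability at most $\delta/2$.

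The row-sparsity claim is more routine: the number of nonzeros falling in a fixed row of $\Omega$ is, over the random placements, a sum of independent indicators with mean $O(\log^4(\sM/\delta)/\epsilon^4)$, so a Chernoff bound gives exponential concentration, and a union bound over the $l=\mathrm{poly}(\sM/\delta)/\epsilon^2$ rows costs only a further constant in the exponent, keeping the total failure probability at most $\delta$. The main obstacle is therefore the moment estimate: obtaining the \emph{optimal} embedding dimension $l=O(\sM/\epsilon^2)$ — as opposed to the $O(\sM^2/\epsilon^2)$ of the elementary Countsketch bound of Theorem \ref{embed}, or a version carrying an extra logarithmic factor — requires the refined graph-counting and chaining bookkeeping of \cite{chenakkod2024optimal}, and in particular a careful treatment of how leverage-score importance sampling interacts with the walk counts. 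For the uses made of this theorem in the present paper, only its black-box conclusion is needed.
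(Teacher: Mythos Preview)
The paper does not prove this statement at all: Theorem~\ref{leverage_embedding} is quoted verbatim as Theorem~4.3 of \cite{chenakkod2024optimal} and used purely as a black box, with no argument supplied here. You yourself note this at the beginning and end of your proposal, so there is nothing in the present paper to compare your sketch against; your outline of the moment/graph-counting approach is a plausible summary of how the cited source establishes the result, but it goes well beyond anything the paper under review attempts.
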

\section{Randomized QR using sparse embeddings}

In this section we present our randomized algorithm, \RandWSQR{}, for selecting $k$ columns from a matrix $A\in \mathbb{R}^{d\times n}$, $d \ll n$, that relies on the QR factorization with column pivoting and sparse embeddings.  We then show that the obtained factorization satisfies the properties of a strong rank-revealing factorization. Let $r$ denote the rank of the matrix $A$.

\subsection{\RandWSQR{} factorization}
Let $\epsilon, \delta \in (0,1)$, $1\leq k\leq \sM$ and $\Omega \in \mathbb{R}^{\ell \times n}$ be a sparse embedding with sparsity $s$ for the subspace $\operatorname{range}(A^{T})$. To begin, we sketch the rows of $A$ to form $B = A\Omega^T$. We then perform a strong rank-revealing QR factorization of $B$ with $\kk\geq k$ to obtain the factorization
\begin{equation}
B \Pi^{B} = Q^B R^B = Q^B \begin{pmatrix}
R_{11}^{B} & R_{12}^{B} \\
0 & R_{22}^{B}
\end{pmatrix}
\end{equation}
with $R_{11}^{B} \in \mathbb{R}^{\kk \times \kk}, R_{12}^{B} \in \mathbb{R}^{\kk\times (l-\kk)}$, and $R_{22}^{B}\in \mathbb{R}^{(\sM-\kk)\times (l-\kk)}$.

To illustrate the column selection process more clearly, we consider that $\Omega$ is a CountSketch matrix. The same steps apply to the general case of a sparse embedding with $\sss>1$. Each column $b_i$ in $B\Pi^{B}$ satisfies $A\Omega^{T}\Pi^{B}\mathbf{e_{i}}=b_{i}$. Let $i_{1},i_{2},...,i_{n_{i}}$ be the nonzero indices of $\Omega^{T}\Pi^{B}\mathbf{e}_{i}$, then $a_{i_{1}}, \ldots, a_{i_{n_{i}}}$ are the columns of $A$ forming $b_i$. Let $\Pi^{A}$ be the permutation matrix that permutes the colums of $A$ such that 
\begin{equation}
A\Pi^{A} = \begin{pmatrix} \tilde{A}_{1} & \tilde{A}_{2}
\end{pmatrix}
\end{equation}
where
$$
\tilde{A}_{1} \in \mathbb{R}^{\sM \times p} = 
\begin{pmatrix}
a_{1_{1}} & a_{1_{2}} & \dots & a_{1_{n_1}} & \dots & a_{\kk_1} & \dots & a_{\kk_{n_k}}
\end{pmatrix}
$$
is the induced column set formed by concatenating those columns, and 
\[ \tilde{A}_{2} \in \mathbb{R}^{\sM \times (n-p)} = \begin{pmatrix}a_{(\kk+1)_{1}}& a_{(\kk+1)_{2}}& \dots& a_{(\kk+1)_{n_{(\kk+1)}}} \ldots& a_{l_{1}}&\dots a_{l_{n_{l}}}\end{pmatrix} \] is the concatenation of the remaining ones. This procedure is shown in Figure \ref{figure_algorithm}.
\begin{figure}[h]
    \centering
    \begin{minipage}{0.4\textwidth}
        \centering
        \includegraphics[width=\linewidth]{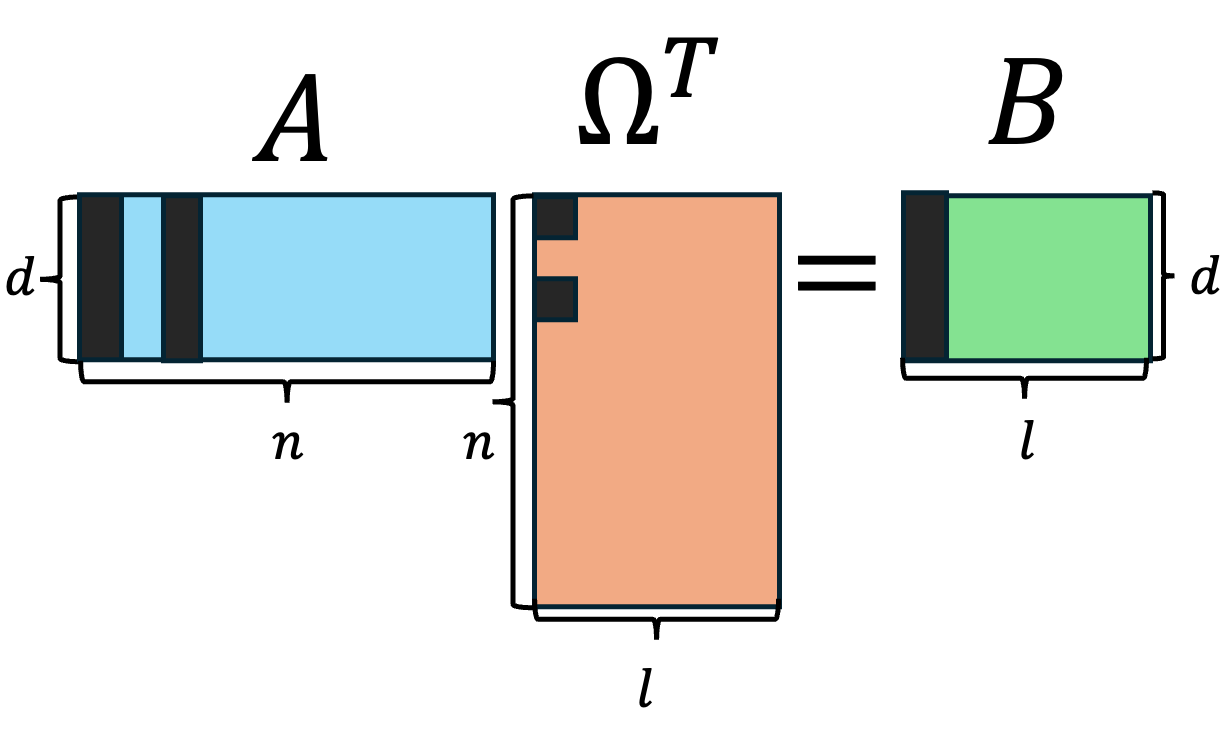}
        \caption*{\textbf{Step 1:} Sketching $A$}
    \end{minipage}
    \hfill
    \begin{minipage}{0.5\textwidth}
        \centering
        \includegraphics[width=\linewidth]{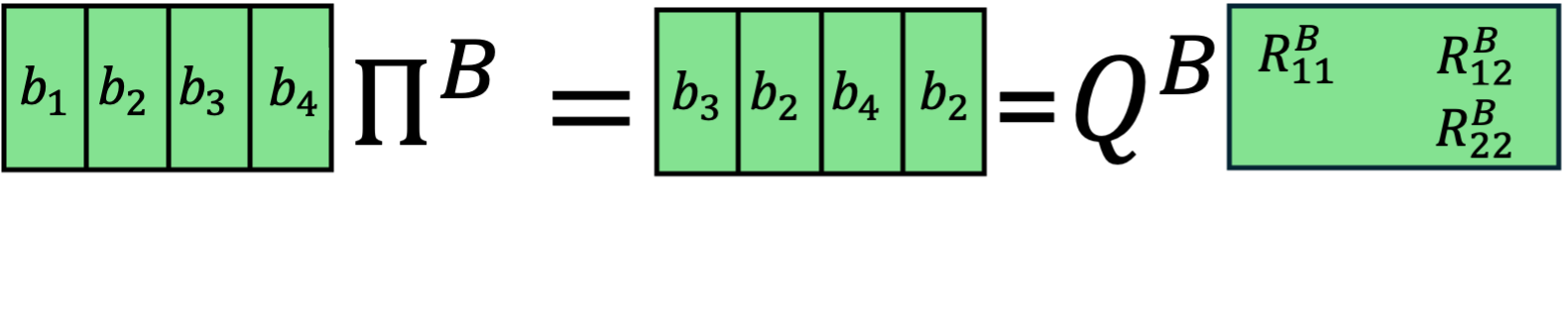}
        \caption*{\textbf{Step 2:} Applying sRRQR on sketched matrix $B$}
    \end{minipage}

    \vspace{0.5cm}  

    \begin{minipage}{0.5\textwidth}
        \centering
        \includegraphics[width=\linewidth]{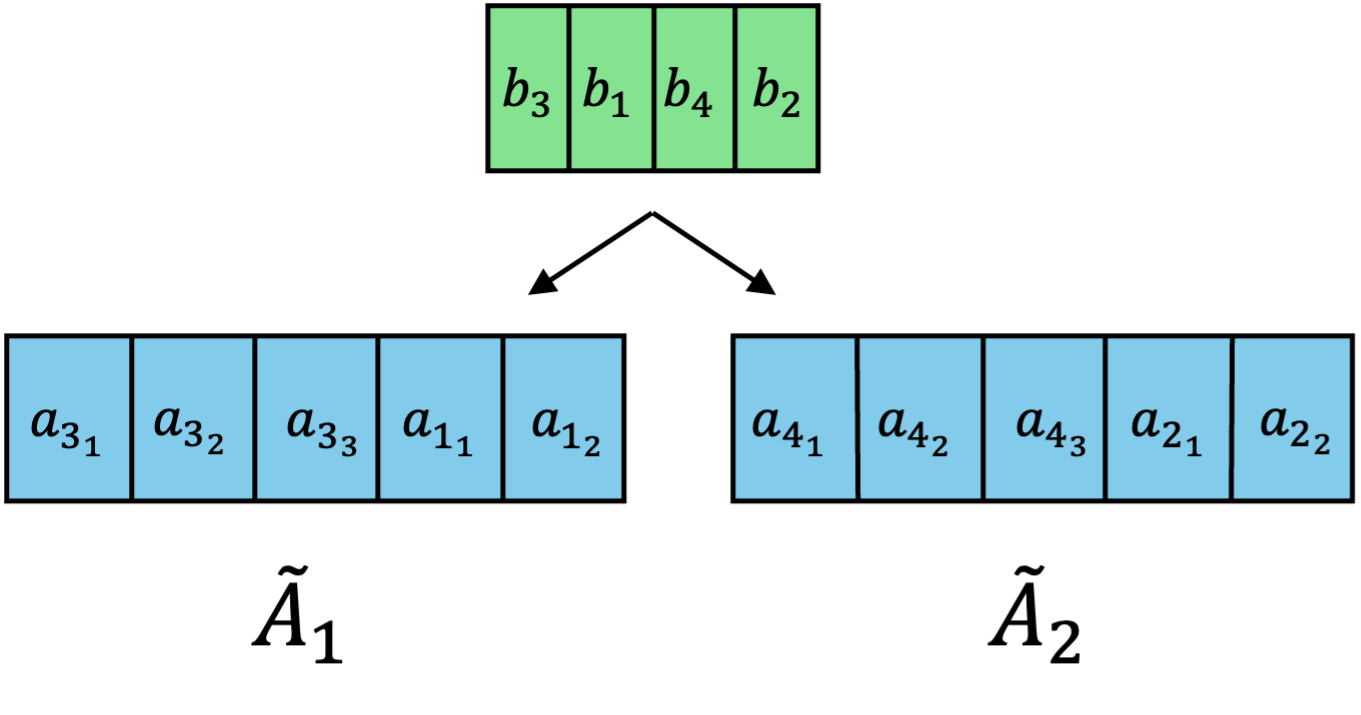}
        \caption*{\textbf{Step 3:} Forming the matrices $\tilde{A}_{1}$ and $\tilde{A}_{2}$}
    \end{minipage}
\caption{Steps of the \RandWSQR{} algorithm}
\label{figure_algorithm}
\end{figure}
After that sRRQR is applied to $\tilde{A}_{1}$ yielding the factors $\tilde{\Pi}, \tilde{Q}, \tilde{R}$ be for the chosen rank $k$, that is
\begin{equation}
\tilde{A}_{1}\tilde{\Pi} = \tilde{Q} \begin{pmatrix}
\tilde{R}_{11} & \tilde{R}_{12} \\
0 & \tilde{R}_{22}
\end{pmatrix},
\end{equation}
with $\tilde{Q}\in \mathbb{R}^{\sM \times \sM}$, $\tilde{R}_{11} \in \mathbb{R}^{k \times k}, \tilde{R}_{12}\in \mathbb{R}^{k\times (p-k)}$, and $\tilde{R}_{22}\in \mathbb{R}^{(\sM -k)\times (p-k)}$. 
Define then 
\[
\Pi = \Pi^{A} \begin{pmatrix}
\tilde{\Pi} & 0 \\
0 & I
\end{pmatrix},
\]
and $Q = \tilde{Q}$, $R_{11} = \tilde{R}_{11}$, 
\[
R_{12} = [\tilde{R}_{12}, [\tilde{Q}^{T}\tilde{A}_{2}](1:k,:)],
\]
\[
R_{22} = [\tilde{R}_{22}, [\tilde{Q}^{T}\tilde{A}_{2}](k+1:\sM,:)],
\]
where
\begin{equation}
\label{The_factorization}
A \Pi = Q \begin{pmatrix}
R_{11} & R_{12} \\
0 & R_{22}
\end{pmatrix}.
\end{equation}
\begin{remark}
    It is sufficient to choose the rank $\kk$ as $k$. When using OSE with sparsity $s>1$ and embedding dimension $\ell$ small, numerically $k'$ can still be chosen slightly smaller that $k$ to prevent the large size $p$. 
\end{remark}
 One of the advantages of using $\Omega$ is the fast application to the matrix $A$ and the low memory storage. It can also be seen that after sketching $A$ from the right with $\Omega^{T}$, each column of the sketched matrix $B$ is a combination of a few columns of the matrix $A$, unlike other dense distributions where each column is a combination of all the columns of the matrix $A$. This property is an essential part for our proposed method. 

Let $w_i$ be a random variable corresponding to the number of nonzeros in row $i$ of $\Omega$. To estimate $\mathbb{E}(w_{i})$ in the oblivious case, for example taking $\Omega$ to be an OSNAP distribution, an analogy can be drawn between the OSNAP distribution and the balls and bins problem, as discussed in \cite{nelson2014lower}.  With this analogy, the nonzero indices can be seen as balls and the rows of $\Omega$ as bins. In the case of CountSketch, for example, $\Omega$ is constructed by throwing $n$ balls uniformly at random into $\ell$ bins. This means that $w_i$ is nothing more than the number of balls in the same bin $i$, which gives $\mathbb{E}(w)=\frac{n}{\ell}$. In the case $n> \ell(\log(\ell))$, we have that the maximum load is $\omega^{*}=\frac{n}{\ell}+\Theta\left(\sqrt{\frac{n\log(\ell)}{\ell}}\right)$ with high probability \cite{raab1998balls}. 

For $s>1$, we are interested in the number of columns that have at least one nonzero entry in a block of $k$ rows of $\Omega$. This represents the number of columns of $A$ involved in forming the corresponding $k$ columns of the sketched matrix $B$.
\begin{proposition}
\label{proposition 2.15}
Let $X$ be the random variable corresponding to the number of nonzero entries in different columns in a block of $k$ rows of $\Omega$. If the sparsity parameter $s\ll \ell$ then $\mathbb{E}(X)=n\left[1-\left(1-\frac{k}{\ell}\right)^{s}\right]$
\end{proposition}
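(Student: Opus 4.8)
The plan is to decompose $X$ over the columns of $\Omega$ and apply linearity of expectation, which reduces the whole claim to computing the probability that a single column of $\Omega$ has no nonzero entry among the $k$ rows under consideration. Fix the block $R\subseteq[l]$ of those $k$ rows, and for each $j\in[n]$ let $Y_j$ be the indicator of the event that column $j$ of $\Omega$ has at least one nonzero entry in a row belonging to $R$. Then $X=\sum_{j=1}^{n}Y_j$, hence $\mathbb{E}(X)=\sum_{j=1}^{n}\mathbb{P}(Y_j=1)$. Since the $s$ nonzero positions of every column of $\Omega$ are drawn from the same marginal distribution, $\mathbb{P}(Y_j=1)$ does not depend on $j$, so $\mathbb{E}(X)=n\,\mathbb{P}(Y_1=1)=n\bigl(1-\mathbb{P}(Y_1=0)\bigr)$. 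Note that only linearity is used here; no joint independence of the columns of $\Omega$ is required.

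It remains to evaluate $\mathbb{P}(Y_1=0)$, the probability that all $s$ nonzero entries of column $1$ fall outside $R$. Each nonzero position of a column is uniform over the $l$ rows, so it lies outside $R$ with probability $1-k/l$; treating the $s$ placements within a column as independent then gives $\mathbb{P}(Y_1=0)=(1-k/l)^s$, and therefore $\mathbb{E}(X)=n\bigl[1-(1-k/l)^s\bigr]$. For $s=1$ this collapses to $nk/l$, recovering the Countsketch value. The hypothesis $s\ll l$ is precisely what legitimizes the product form in the genuine OSNAP-type construction, where the $s$ nonzeros of a column occupy $s$ distinct rows: there the exact ``miss'' probability equals $\binom{l-k}{s}/\binom{l}{s}=\prod_{i=0}^{s-1}\frac{l-k-i}{l-i}$, which coincides with $(1-k/l)^s$ up to a multiplicative factor $1+O(ks^2/l^2)$ (and is in fact bounded above by $(1-k/l)^s$), hence negligible once $s\ll l$.

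The one nonroutine ingredient is this last comparison of $\prod_{i=0}^{s-1}\frac{l-k-i}{l-i}$ with $(1-k/l)^s$, which follows from a short telescoping or Jensen-type estimate valid when $s\ll l$; everything else is the indicator decomposition, linearity of expectation, and the exchangeability of the columns of $\Omega$. I would therefore write the proof in the order: (i) establish $X=\sum_{j}Y_j$ and $\mathbb{E}(X)=n\,\mathbb{P}(Y_1=1)$; (ii) read off $\mathbb{P}(Y_1=0)=(1-k/l)^s$ from the per-column sparsity pattern; and (iii) record the approximation that justifies writing this as an equality under the assumption $s\ll l$.
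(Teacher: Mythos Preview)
Your proof is correct and mirrors the paper's argument almost exactly: both decompose $X$ as a sum of per-column indicator variables, use linearity of expectation to reduce to a single column, obtain the exact miss probability $\binom{l-k}{s}/\binom{l}{s}$, and then invoke the hypergeometric-to-binomial approximation under $s\ll l$ to arrive at $(1-k/l)^s$. The only cosmetic difference is that the paper writes the exact hypergeometric expression first and then approximates, whereas you first state the binomial form and justify it afterward.
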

\begin{proof}
    Note that $X$ is the number of columns chosen such that these columns have at least one nonzero entry in the block of $k$ rows. For that, we do not just multiply the number of nonzeros in each row by $k$, since this could count the same column more than once.
    
    For each $j\in [n]$, let $E_{j}$ denote the event of having at least 1 nonzero entry in the $k$ rows of column $j$ and $X_{j}$ be an indicator random variable for the event $E_{j}$, i.e. 
    $$X_{j} = \begin{cases}
        1 & \text{if    } E_{j}\neq \varnothing\\
        0 & \text{if    }  E_{j}=\varnothing
    \end{cases} $$
    Then $X=\sum_{j}X_{j}$.This gives
    \begin{align*}
    \mathbb{E}(X) &= n\mathbb{E}(X_{j})= n\left(1-\frac{\binom{\ell-k}{s}}{\binom{\ell}{s}}\right)\\
    \intertext{For $s\ll \ell$, hypergeometric distribution can be approximated by binomial distribution which gives}
   \mathbb{E}(X) &\approx n\left[1-\left(1-\frac{k}{\ell}\right)^{s}\right]
    \end{align*}
\end{proof}
This guarantees that $\tilde{A}_{1}$ has much fewer columns than $A$, making it more efficient to apply strong rank revealing QR to $\tilde{A}_{1}$ rather than $A$.
\begin{remark}
For the case with $\sss>1$, although the embedding dimension $\ell$ can be much smaller than that of the CountSketch, the size of $\tilde{A}_{1}$, denoted by $p$, is larger. This increase is due to the higher number of nonzero entries in each column, which results in each pivot of the sketched matrix $B$ becoming a linear combination of a larger number of columns, leading to a higher dimension of $\tilde{A}_{1}$. However, in practice, this can be solved by slightly increasing $\ell$ or applying sRRQR to $B$ with rank less than $k$ while still ensuring that $p>\sM$.
\end{remark} 
 We note that we have 
    $$A\Pi^{A} (\Pi^{A})^{T}\Omega^{T} \Pi^{B} = B\Pi^{B}=Q\begin{pmatrix}
        R_{11}^{B}& R_{12}^{B}\\
        &R_{22}^{B}
    \end{pmatrix}.$$
\label{remark omega}
Let us define
\[
\bar{\Omega} = \begin{pmatrix}
    \bar{\Omega}_{11} & \bar{\Omega}_{12} \\
    0_{(n-p)\times (\ell-\kk)}  & \bar{\Omega}_{22}
\end{pmatrix} = (\Pi^{A})^{T} \Omega^{T} \Pi^{B},
\]
where:
\[
\bar{\Omega}_{11} \in \mathbb{R}^{p \times \kk}, \quad \bar{\Omega}_{22} \in \mathbb{R}^{(n-p) \times (\ell - \kk)}, \quad \bar{\Omega}_{12} \in \mathbb{R}^{p\times(\ell-\kk)}.
\]

This column selection procedure and the associated \RandWSQR{} factorization is presented in Algorithm \ref{algorithm1}. First,  a sparse embedding is applied to the matrix $A$ (line 2). Then sRRQR is applied to the sketched matrix $B$ in line 3. By extracting the corresponding indices (line 6), $\tilde{A}_{1}$ is formed (line 7) followed by sRRQR to get the final column selection as in line 8.

\begin{algorithm}
\caption{\RandWSQR{}}
\label{algorithm1}
\begin{algorithmic}

\State \textbf{Input:} Matrix $A\in \mathbb{R}^{\sM\times n}$, oblivious sparse embedding $\Omega\in \mathbb{R}^{\ell\times n}$, ranks $k$ and $\kk$, sRRQR constant $f$

\State \textbf{Output:} Orthogonal matrix $Q\in \mathbb{R}^{\sM \times k}$, upper triangular matrix $R\in \mathbb{R}^{k\times n}$, permutation vector $\Pi$

\State $B = A \times \Omega^T$ 
\Comment{Form the sketched matrix $B$}

\State $[Q^{B}, R^{B}, \Pi^{B}] = \text{sRRQR}(B, f, \kk)$ 
\Comment{Compute sRRQR on $B$}

\State $selected\_pivots = \Pi^{B}(1:\kk)$

\State $selected\_rows = \Omega(selected\_pivots, :)$

\State $[\sim, indices] = \text{find}(selected\_rows)$ 
\Comment{Indices forming first $k$ columns of $B$}

\State $\tilde{A}_{1} = A(:, indices)$ 
\Comment{Form the matrix $\tilde{A}_{1}$}

\State $[\tilde{Q}, \tilde{R}, \tilde{\Pi}] = \text{sRRQR}(\tilde{A}_{1}, f, k)$ 
\Comment{Compute sRRQR on $\tilde{A}_{1}$}

\State $\tilde{A}_{2} = A(:, remaining\_indices)$ 
\Comment{Form the matrix $\tilde{A}_{2}$}

\State $Q = \tilde{Q}(:, 1:k)$

\State $R = \begin{pmatrix} \tilde{R}(1:k,:) & Q^{T} \tilde{A}_{2} \end{pmatrix}$ 
\Comment{Truncate $Q$ and form $R$}

\State $\Pi = [indices*\tilde{\Pi}, remaining\_indices]$ 
\Comment{Permutation vector}

\State \Return $Q, R, \Pi$

\end{algorithmic}
\end{algorithm}

\subsection{Arithmetic Complexity of \RandWSQR{}}
\label{section 3.2}
\RandWSQR{} consists of two parts. In the first part, strong rank revealing QR is applied to the sketched matrix $B$ with $\ell$ columns where $\ell\ll n$. In the second part, strong rank revealing QR is applied to the reduced column matrix $\tilde{A}_{1}$ with $p$ columns. Proposition \ref{proposition 2.15} shows that $\mathbb{E}(p)=n\left[1-\left(1-\frac{\kk}{\ell}\right)^{s}\right]$, which is also much smaller than $n$. This implies that applying sRRQR to $B$ and $\tilde{A}_{1}$ has a lower computational cost than applying it to $A$ with $n$ columns. 

Thus, the computational cost of obtaining the pivots through this algorithm consists first of sketching $A$ with $\Omega^{T}$, which is accelerated due to the use of a sparse matrix, resulting in $O(\sss \sM n)$ operations. Secondly, the application of sRRQR to $B$ and $\tilde{A}_{1}$ is of complexity $O(\sM(\ell\kk+pk))$. The other steps involved have significantly lower computational complexity and are therefore considered minor in comparison. The total complexity of computing the pivots through \RandWSQR{} is $O(\sM(\ell\kk+pk+n\sss))$. To obtain the full factorization, the additional cost of computing $\tilde{Q}^{T}\tilde{A}_{2}$, which is a BLAS3 operation, must be considered. If we choose $\epsilon=\frac{1}{2}$ and consider the case of $\sss=O(\log(\sM))$ with $\ell=O(\sM \log(\sM))$, the complexity is:
\[
O\left(4\sM^{2}\log(\sM)\kk+dkp+\sM n \log(\sM)\right) 
\]
with $\mathbb{E}(p)=n\left[1-\left(1-\frac{\kk}{4\sM\log(\sM)}\right)^{2\log(\sM)}\right]$.
Compared to the complexity of deterministic strong RRQR, $O(ndk)$, it can be seen that the first term, $4\sM\sM\log(\sM)\kk$, does not depend on $n$, making it negligible with respect to the last term for large values of $n$. For the second term, $\mathbb{E}(p)$ is smaller than $n$. This can be seen by checking the worst case where $\kk=d$, $dp\kk=\frac{nd^{2}}{2}$. The sketching step yields a constant-factor improvement over QRCP by approximately a factor of $k/s$ for dense matrices. For sparse matrices, the sketching complexity is $O(\mathrm{nnz}(A)\,s)$, providing additional computational savings as the sparsity of $A$ increases. Indeed, as will be seen later in the numerical results section, our randomized approach leads to a significant reduction in execution time with respect to the deterministic strong RRQR.

\section{Strong Rank Revealing Properties of \RandWSQR{} }
In order to show that the factorization obtained by \RandWSQR{} is strong rank-revealing, we prove that the obtained factorization verifies the inequalities \eqref{inequality_intro}

In this section, we define two matrices $\mathcal{N}$ and $\mathcal{C}$ as: $$\mathcal{N}=\begin{pmatrix}
    I_{k\times k} & \tilde{R}_{11}^{-1}\tilde{R}_{12}
\end{pmatrix}\tilde{\Pi}^{T}\overline{\Omega}_{11} \quad \text{and} \quad \mathcal{C}=\begin{pmatrix}
    0_{(d-k)\times k} & \tilde{R}_{22}
\end{pmatrix}\tilde{\Pi}^{T}\overline{\Omega}_{11}.$$
We follow the proof strategy in \cite{demmel2015communication} combined with our formulation using sparse sketching.
\begin{lemma}
\label{form_lemma}
Let $\tilde{Q},Q^{B},\tilde{R},R^{B}$ and $\tilde{A}_{1}$ be the matrices resulting from applying \RandWSQR{} factorization presented in Algorithm \ref{algorithm1} to $A$ using oblivious sparse embedding $\Omega$ for $range(A^{T})$. The form of $\tilde{Q}^{T}\tilde{A}_{2}\overline{\Omega}_{22}$ is
$$\tilde{Q}^{T}\tilde{A_{2}}\overline{\Omega}_{22}=\begin{pmatrix}
    \tilde{R}_{11}\mathcal{N}N+C^{(1)} - \begin{pmatrix}
        \tilde{R}_{11} & \tilde{R}_{12}
    \end{pmatrix}\tilde{\Pi}^{T}\overline{\Omega}_{12} \\
    \mathcal{C}N+C^{(2)} - \begin{pmatrix}
         & \tilde{R}_{22}
    \end{pmatrix}\tilde{\Pi}^{T}\overline{\Omega}_{12} 
\end{pmatrix},$$
with  $N=(R_{11}^{B})^{-1}R_{12}^{B}$ and $\begin{pmatrix}
    C^{(1)}\\
    C^{(2)}
\end{pmatrix}=\tilde{Q}^{T}Q^{B}\begin{pmatrix}
    \\
    R_{22}^{B}
\end{pmatrix}$.
\end{lemma}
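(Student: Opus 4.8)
The plan is to derive the formula by block-matrix algebra alone, following the strategy of \cite{demmel2015communication}: no estimates are needed, only identities among the factors. The starting point is the relation recorded just before the lemma, obtained by inserting $\Pi^{A}(\Pi^{A})^{T}$ into $A\Omega^{T}\Pi^{B}$,
$$\begin{pmatrix}\tilde{A}_{1} & \tilde{A}_{2}\end{pmatrix}\bar{\Omega} = B\Pi^{B} = Q^{B}\begin{pmatrix}R_{11}^{B} & R_{12}^{B} \\ 0 & R_{22}^{B}\end{pmatrix}.$$
First I would split $\bar{\Omega}$ according to its stated $2\times 2$ partition. This uses that the $(2,1)$ block of $\bar{\Omega}$ vanishes, which is precisely the point where the two sRRQR factorizations get linked: by construction $\tilde{A}_{1}$ is the set of columns of $A$ that enter, with a nonzero coefficient of $\Omega$, the first $\kk$ pivot columns of $B\Pi^{B}$, so the first $\kk$ columns of $(\Pi^{A})^{T}\Omega^{T}\Pi^{B}$ are supported on the first $p$ rows only. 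Granting this, comparing the first $\kk$ columns of the two sides, and then the last $l-\kk$ columns, yields
$$\tilde{A}_{1}\bar{\Omega}_{11} = Q^{B}\begin{pmatrix}R_{11}^{B}\\ 0\end{pmatrix}, \qquad \tilde{A}_{1}\bar{\Omega}_{12} + \tilde{A}_{2}\bar{\Omega}_{22} = Q^{B}\begin{pmatrix}R_{12}^{B}\\ R_{22}^{B}\end{pmatrix}.$$

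Next I would solve the second identity for $\tilde{A}_{2}\bar{\Omega}_{22}$, multiply on the left by $\tilde{Q}^{T}$, and treat the two resulting terms separately. For $\tilde{Q}^{T}\tilde{A}_{1}\bar{\Omega}_{12}$, I substitute $\tilde{Q}^{T}\tilde{A}_{1} = \begin{pmatrix}\tilde{R}_{11} & \tilde{R}_{12} \\ 0 & \tilde{R}_{22}\end{pmatrix}\tilde{\Pi}^{T}$ (a rearrangement of the sRRQR factorization of $\tilde{A}_{1}$), whose two row blocks are exactly the terms $\begin{pmatrix}\tilde{R}_{11} & \tilde{R}_{12}\end{pmatrix}\tilde{\Pi}^{T}\bar{\Omega}_{12}$ and $\begin{pmatrix}0 & \tilde{R}_{22}\end{pmatrix}\tilde{\Pi}^{T}\bar{\Omega}_{12}$ subtracted in the statement. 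For $\tilde{Q}^{T}Q^{B}\begin{pmatrix}R_{12}^{B}\\ R_{22}^{B}\end{pmatrix}$, I write $R_{12}^{B} = R_{11}^{B}N$ with $N = (R_{11}^{B})^{-1}R_{12}^{B}$ (using that $R_{11}^{B}$ is invertible, in line with the standing nondegeneracy assumption), so that
$$Q^{B}\begin{pmatrix}R_{12}^{B}\\ R_{22}^{B}\end{pmatrix} = \left(Q^{B}\begin{pmatrix}R_{11}^{B}\\ 0\end{pmatrix}\right)N + Q^{B}\begin{pmatrix}0\\ R_{22}^{B}\end{pmatrix}.$$

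Then, using the first identity above to replace $Q^{B}\begin{pmatrix}R_{11}^{B}\\ 0\end{pmatrix}$ by $\tilde{A}_{1}\bar{\Omega}_{11}$ and again $\tilde{Q}^{T}\tilde{A}_{1} = \begin{pmatrix}\tilde{R}_{11} & \tilde{R}_{12} \\ 0 & \tilde{R}_{22}\end{pmatrix}\tilde{\Pi}^{T}$, the first $k$ rows of $\tilde{Q}^{T}Q^{B}\begin{pmatrix}R_{11}^{B}\\ 0\end{pmatrix}$ become $\tilde{R}_{11}\begin{pmatrix}I_{k\times k} & \tilde{R}_{11}^{-1}\tilde{R}_{12}\end{pmatrix}\tilde{\Pi}^{T}\bar{\Omega}_{11} = \tilde{R}_{11}\mathcal{N}$ and the last $d-k$ rows become $\begin{pmatrix}0_{(d-k)\times k} & \tilde{R}_{22}\end{pmatrix}\tilde{\Pi}^{T}\bar{\Omega}_{11} = \mathcal{C}$, by the definitions of $\mathcal{N}$ and $\mathcal{C}$. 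Setting $\begin{pmatrix}C^{(1)}\\ C^{(2)}\end{pmatrix} = \tilde{Q}^{T}Q^{B}\begin{pmatrix}0\\ R_{22}^{B}\end{pmatrix}$ and reassembling the two row blocks produces exactly the claimed expression for $\tilde{Q}^{T}\tilde{A}_{2}\bar{\Omega}_{22}$.

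The whole argument is bookkeeping with the permutations $\tilde{\Pi}$, $\Pi^{A}$, $\Pi^{B}$ and the block partitions indexed by $k$, $\kk$, $p$, $l$. The one step that genuinely needs care — and that I would flag as the place an error could slip in — is the vanishing of the $(2,1)$ block of $\bar{\Omega}$, since that is the only point at which the combinatorial construction of $\tilde{A}_{1}$ from the sparsity pattern of $\Omega$ and the pivot set $\Pi^{B}(1:\kk)$ is actually used, and it is what makes the two sRRQR factorizations compose into the stated identity.
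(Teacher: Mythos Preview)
Your proposal is correct and follows essentially the same route as the paper's proof: both start from the block identity $\begin{pmatrix}\tilde{A}_{1} & \tilde{A}_{2}\end{pmatrix}\bar{\Omega}=Q^{B}R^{B}$, isolate $\tilde{A}_{2}\bar{\Omega}_{22}$ via the vanishing $(2,1)$ block of $\bar{\Omega}$, write $R_{12}^{B}=R_{11}^{B}N$, replace $Q^{B}\begin{pmatrix}R_{11}^{B}\\0\end{pmatrix}$ by $\tilde{A}_{1}\bar{\Omega}_{11}$, and then substitute $\tilde{Q}^{T}\tilde{A}_{1}=\tilde{R}\tilde{\Pi}^{T}$ to read off $\mathcal{N}$, $\mathcal{C}$, and the $\bar{\Omega}_{12}$ correction. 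Your write-up is in fact slightly more explicit than the paper's about why the $(2,1)$ block of $\bar{\Omega}$ vanishes, which is indeed the one structural input from the algorithm.
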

\begin{proof}
    First it can be noticed that 
\begin{align*}    \tilde{Q}^{T}\tilde{A_{2}}\overline{\Omega}_{22}&=\tilde{Q}^{T}Q^{B}\begin{pmatrix}
        R^{B}_{12}\\
        R^{B}_{22}
    \end{pmatrix} - \begin{pmatrix}
        \tilde{R}_{11} & \tilde{R}_{12}\\
         & \tilde{R}_{22}
    \end{pmatrix}\tilde{\Pi}^{T}\overline{\Omega}_{12}\\
    &= \tilde{Q}^{T}Q^{B}\begin{pmatrix}
        R_{11}^{B}N\\
        R_{22}^{B}
    \end{pmatrix} - \begin{pmatrix}
        \tilde{R}_{11} & \tilde{R}_{12}\\
         & \tilde{R}_{22}
    \end{pmatrix}\tilde{\Pi}^{T}\overline{\Omega}_{12}.\\
    \intertext{We decompose $Q^{B}\begin{pmatrix}
        R_{12}^{B}\\
        R_{22}^{B}\\
    \end{pmatrix}$ as }
    \tilde{Q}^{T}\tilde{A}_{2}\overline{\Omega}_{22}&= \tilde{Q}^{T}Q^{B}\begin{pmatrix}
        R_{11}^{B}\\
        \\
    \end{pmatrix}N+\tilde{Q}^{T}Q^{B}\begin{pmatrix}
         \\
         R_{22}^{B}
    \end{pmatrix} - \begin{pmatrix}
        \tilde{R}_{11} & \tilde{R}_{12}\\
         & \tilde{R}_{22}
    \end{pmatrix}\tilde{\Pi}^{T}\overline{\Omega}_{12},
\end{align*}
 with \( N = (R_{11}^{B})^{-1} R_{12}^{B} \). We proceed by writing \begin{equation}
Q^{B} \begin{pmatrix}
    R_{11}^{B} \\
    \\
\end{pmatrix} = \tilde{Q}\begin{pmatrix}
    \tilde{R}_{11} & \tilde{R}_{12}\\
     & \tilde{R}_{22}
\end{pmatrix}\tilde{\Pi}^{T}\overline{\Omega}_{11}.
\label{eq:my_labeled_equation}
\end{equation}
This leads to the following equality:
$$\tilde{Q}^{T}Q^{B}\begin{pmatrix}
    R_{11}^{B}\\
    \\
\end{pmatrix}=\begin{pmatrix}
    \tilde{R}_{11} & \tilde{R}_{12}\\
     & \tilde{R}_{22}
\end{pmatrix}\tilde{\Pi}^{T}\overline{\Omega}_{11}.$$
On the other side, we can also write
$$\tilde{Q}^{T}Q^{B}\begin{pmatrix}
     \\
     R_{22}^{B}
\end{pmatrix}=C=\begin{pmatrix}
    C^{(1)}\\
    C^{(2)}
\end{pmatrix}.$$
This allows us to rewrite 
$\tilde{Q}^{T}\tilde{A_{2}}\overline{\Omega}_{22}=\begin{pmatrix}
    \tilde{R}_{11}\mathcal{N}N+C^{(1)} - \begin{pmatrix}
        \tilde{R}_{11} & \tilde{R}_{12}
    \end{pmatrix}\tilde{\Pi}^{T}\overline{\Omega}_{12} \\
    \mathcal{C}N+C^{(2)} - \begin{pmatrix}
        0_{(d-k)\times k} & \tilde{R}_{22}
    \end{pmatrix}\tilde{\Pi}^{T}\overline{\Omega}_{12} 
\end{pmatrix}.$
\end{proof}
\begin{remark}
\label{remark_bound_embedding}
In order to bound 
\[
\|\tilde{A}_{1}\begin{pmatrix} \overline{\Omega}_{11} & \overline{\Omega}_{12} \end{pmatrix}\|_{2}^{2} \leq \|\tilde{A}_{1}\|_{2}^{2} \left\| \begin{pmatrix} \overline{\Omega}_{11} & \overline{\Omega}_{12} \end{pmatrix} \right\|_{1} \left\| \begin{pmatrix} \overline{\Omega}_{11} & \overline{\Omega}_{12} \end{pmatrix} \right\|_{\infty} \leq \omega^{*} \|\tilde{A}_{1}\|_{2}^{2}
\]
with high probability, we leverage the maximum load of allocating $p$ balls into $\ell$ bins across different regimes according to \cite[Theorem 1]{raab1998balls}.
\begin{itemize}
   \item If $\ell / \text{polylog}(\ell) \le sp \ll \ell \log \ell$, then by choosing a constant $\alpha > 1$, we have
    \[
    \omega^{*} = \frac{\log \ell}{\log \left( \frac{\ell \log \ell}{sp} \right)} \left( 1 + \alpha \frac{\log^{(2)} \ell}{\log \ell} \right)
    \]
    such that $\Pr \left[ \left\| \begin{pmatrix} \overline{\Omega}_{11} & \overline{\Omega}_{12} \end{pmatrix} \right\|_{1} > \omega^{*} \right] = o(1)$.
\item If $sp = c \cdot \ell \log \ell$ for some constant $c$, then by choosing a constant $\alpha > 1$, we have
    \[
    \omega^{*} = (d_c^{-1} + \alpha) \log \ell
    \]
    such that $\Pr \left[ \left\| \begin{pmatrix} \overline{\Omega}_{11} & \overline{\Omega}_{12} \end{pmatrix} \right\|_{1} > \omega^{*} \right] = o(1)$, where $d_c$ is a constant depending only on $c$.
\item If $\ell \log \ell \ll sp \le \ell \cdot \text{polylog}(\ell)$, then by choosing a constant $\alpha > 1$, we have
    \[
    \omega^{*} = \frac{sp}{\ell} + \alpha \sqrt{\frac{2sp \log \ell}{\ell}}
    \]
    such that $\Pr \left[ \left\| \begin{pmatrix} \overline{\Omega}_{11} & \overline{\Omega}_{12} \end{pmatrix} \right\|_{1} > \omega^{*} \right] = o(1)$.
   \item If $sp \gg \ell(\log \ell)^{3}$, then by choosing a constant $\alpha > 1$, we have 
    \[
    \omega^{*} = \frac{sp}{\ell} + \sqrt{\frac{2sp\log(\ell)}{\ell}\left(1-\frac{1}{\alpha}\frac{\log^{(2)}(\ell)}{2\log(\ell)}\right)}
    \] 
    such that $\Pr \left[ \left\| \begin{pmatrix} \overline{\Omega}_{11} & \overline{\Omega}_{12} \end{pmatrix} \right\|_{1} > \omega^{*} \right] = o(1)$.
   \item Alternatively, one could require the subspace embedding property to hold uniformly over all choices of submatrices, yielding:
    \begin{equation}
    \label{equation1_bound_uniform}
    \|\tilde{A}_{1}\begin{pmatrix} \overline{\Omega}_{11} & \overline{\Omega}_{12} \end{pmatrix}\|_{2} \leq (1+\epsilon)\|\tilde{A}_{1}\|_{2}.
    \end{equation}
    However, achieving this requires a union bound over all $\binom{n}{p}$ possible column combinations. Ensuring that $\Omega$ acts as an oblivious subspace embedding simultaneously for all combinations introduces a logarithmic dependence on $n$ into both the embedding dimension and the failure probability. This would result in $\omega^{*}=1+\epsilon$ in Lemma \ref{bound_lemma}. However, here we restrict our analysis to the structural conditions outlined previously.
\end{itemize}
\end{remark}
    

\begin{lemma}
\label{bound_lemma}
Let $\tilde{A}_{1}$ and $B$ be the matrices resulting from \RandWSQR{} factorization presented in Algorithm \ref{algorithm1} on $A$ using oblivious sparse embedding $\Omega$ for $range(A^{T})$. Denote $\alpha = \frac{\sigma_{max}(R_{22})}{\sigma_{min}(R_{11})}$. If strong rank revealing QR is applied to $B$ and $\tilde{A_{1}}$ then
$$\sqrt{1+\alpha^{2}+\|\begin{pmatrix}
        I & R_{11}^{-1}R_{12}
    \end{pmatrix}\|_{2}^{2}}\leq \sqrt{ 1+\frac{4\omega^{*}}{1-\epsilon}(1+f^{2}k(p-k)(1+f^{2}\kk(\ell-\kk))}.$$
\end{lemma}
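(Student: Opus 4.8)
My plan is to split each of $\alpha^{2}$ and $\|(I\ R_{11}^{-1}R_{12})\|_{2}^{2}$ into a contribution coming purely from the strong RRQR factorization of $\tilde A_{1}$ and a contribution coming from the block $\tilde Q^{T}\tilde A_{2}$, and then to control the latter through the embedding together with Lemma~\ref{form_lemma}. Recalling from the construction that $R_{11}=\tilde R_{11}$, $R_{12}=(\tilde R_{12}\ \ [\tilde Q^{T}\tilde A_{2}](1:k,:))$ and $R_{22}=(\tilde R_{22}\ \ [\tilde Q^{T}\tilde A_{2}](k+1:\sM,:))$, I would first use $\|(M_{1}\ M_{2})\|_{2}^{2}\le\|M_{1}\|_{2}^{2}+\|M_{2}\|_{2}^{2}$ to peel off $\|(I\ \tilde R_{11}^{-1}\tilde R_{12})\|_{2}^{2}+\|\tilde R_{22}\|_{2}^{2}/\sigma_{\min}(\tilde R_{11})^{2}$, which is $\le 1+f^{2}k(p-k)$ by summing the per-column inequality of Corollary~\ref{corollary 2.5} applied to $\tilde A_{1}$ over $j=1,\dots,p-k$. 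This reduces the statement to bounding $b_{1}:=\|\tilde R_{11}^{-1}[\tilde Q^{T}\tilde A_{2}](1:k,:)\|_{2}^{2}$ and $b_{2}:=\|[\tilde Q^{T}\tilde A_{2}](k+1:\sM,:)\|_{2}^{2}/\sigma_{\min}(\tilde R_{11})^{2}$.

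To control these I would pass from $\tilde A_{2}$ to $\tilde A_{2}\overline\Omega_{22}$. The columns of $\Omega$ not used to build $\tilde A_{1}$ still have their $\sss$ nonzeros placed uniformly at random, so by Remark~\ref{remark_embedding submatrix} (valid once $n-p>l$) the matrix $\overline\Omega_{22}^{T}$ is an $\epsilon$-embedding of $\operatorname{range}(\tilde A_{2}^{T})$; since $\tilde A_{2}^{T}\tilde Q v\in\operatorname{range}(\tilde A_{2}^{T})$ for every $v$, this gives $\|v^{T}\tilde A_{2}\|\le(1-\epsilon)^{-1/2}\|v^{T}\tilde A_{2}\overline\Omega_{22}\|$, which transfers block by block to $b_{1}\le(1-\epsilon)^{-1}\|\tilde R_{11}^{-1}[\tilde Q^{T}\tilde A_{2}\overline\Omega_{22}](1:k,:)\|_{2}^{2}$ and similarly for $b_{2}$. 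Next I would substitute the expression of Lemma~\ref{form_lemma} for $\tilde Q^{T}\tilde A_{2}\overline\Omega_{22}$ and regroup $\mathcal N N$ with the $\overline\Omega_{12}$-term, obtaining
\[
\begin{pmatrix}\tilde R_{11}^{-1}[\tilde Q^{T}\tilde A_{2}\overline\Omega_{22}](1:k,:)\\ \sigma_{\min}(\tilde R_{11})^{-1}[\tilde Q^{T}\tilde A_{2}\overline\Omega_{22}](k+1:\sM,:)\end{pmatrix}
=\Psi\,(\overline\Omega_{11}N-\overline\Omega_{12})+D\,\tilde Q^{T}Q^{B}\begin{pmatrix}0\\ R_{22}^{B}\end{pmatrix},
\]
with $D=\operatorname{diag}(\tilde R_{11}^{-1},\,I/\sigma_{\min}(\tilde R_{11}))$ and $\Psi=D\,\tilde Q^{T}\tilde A_{1}$, so that $\|\Psi\|_{2}^{2}\le 1+f^{2}k(p-k)$ by the same computation as above.

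The crucial point is that $\Psi$ carries the factor $\tilde A_{1}$ and hence annihilates anything orthogonal to $\operatorname{range}(\tilde A_{1}^{T})$; writing $P$ for the orthogonal projector onto $\operatorname{range}(\tilde A_{1}^{T})$ this gives $\Psi(\overline\Omega_{11}N-\overline\Omega_{12})=\Psi P(\overline\Omega_{11}N-\overline\Omega_{12})$, and since \eqref{equation1_bound} (Remark~\ref{remark_bound_embedding}, case $p>l$) forces $\|P(\overline\Omega_{11}\ \overline\Omega_{12})\|_{2}\le\sqrt{1+\epsilon}$, I get $\|\Psi(\overline\Omega_{11}N-\overline\Omega_{12})\|_{2}\le\sqrt{(1+\epsilon)(1+f^{2}k(p-k))}\,\sqrt{1+\|N\|_{2}^{2}}$. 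For the second term I would use $\|C^{(1)}\|_{2},\|C^{(2)}\|_{2}\le\|R_{22}^{B}\|_{2}$ and combine $\sigma_{\min}(\tilde R_{11})\ge\sigma_{k}(\tilde A_{1})/\sqrt{1+f^{2}k(p-k)}$ (Theorem~\ref{theorem 2.4} on $\tilde A_{1}$) with $\sigma_{\min}(R_{11}^{B})=\sigma_{\kk}(\tilde A_{1}\overline\Omega_{11})\le\sqrt{1+\epsilon}\,\sigma_{k}(\tilde A_{1})$ (singular-value interlacing for the first $\kk$ columns of $\tilde A_{1}(\overline\Omega_{11}\ \overline\Omega_{12})$ plus Corollary~\ref{bound_singular_values}), yielding $\|D\,\tilde Q^{T}Q^{B}\binom{0}{R_{22}^{B}}\|_{2}\le\sqrt{(1+\epsilon)(1+f^{2}k(p-k))}\,\|R_{22}^{B}\|_{F}/\sigma_{\min}(R_{11}^{B})$. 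Finally Corollary~\ref{corollary 2.5} applied to $B$ gives $\|N\|_{F}^{2}+\|R_{22}^{B}\|_{F}^{2}/\sigma_{\min}(R_{11}^{B})^{2}\le f^{2}\kk(l-\kk)$, so (using $\|N\|_{2}\le\|N\|_{F}$ and $(a+b)^{2}\le 2(a^{2}+b^{2})$) the two ingredients add up to $\le 2(1+\epsilon)(1+f^{2}k(p-k))(1+f^{2}\kk(l-\kk))$; feeding this through the $(1-\epsilon)^{-1}$ of the previous step bounds $b_{1}+b_{2}$, and re-assembling with the $\tilde A_{1}$-contribution gives the asserted inequality after a careful but routine bookkeeping of the numerical constants.

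I expect the main obstacle to be exactly this cancellation. Any direct estimate — $\|\mathcal N\|_{2}\le\|\,\cdot\,\|_{2}\|\overline\Omega_{11}\|_{2}$, or $\|\tilde R_{11}^{-1}\,\cdot\,\|_{2}\le\|\tilde R_{11}^{-1}\|_{2}\|\,\cdot\,\|_{2}$ — would introduce the spectral norm of the bare sparse sketch (which is $\Theta(\sqrt{\text{max load}})$, not $O(1)$) or the condition number of $\tilde R_{11}$, neither of which appears in the final bound; ruling them out is precisely what the ``$\Psi$ absorbs $\tilde A_{1}$'' observation and the strong RRQR coupling of $\|N\|$ with $\|R_{22}^{B}\|/\sigma_{\min}(R_{11}^{B})$ on $B$ are for. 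A secondary point I would need to verify is that the submatrix-embedding facts are available: Remark~\ref{remark_bound_embedding}/\eqref{equation1_bound} needs $p>l$, and applying Remark~\ref{remark_embedding submatrix} to $\tilde A_{2}$ needs $n-p>l$; both hold in the wide regime $\sM\ll n$ (after enlarging $\kk$ so that $p\ge\sM$ if necessary), and if only the weaker $p\ll l$ bound of Remark~\ref{remark_bound_embedding} is at hand the same scheme goes through at the price of an extra logarithmic factor in $l$.
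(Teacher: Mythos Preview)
Your strategy is valid and reaches the same bound, but it is organised differently from the paper. The paper never isolates $\tilde A_2$: it applies the \emph{full} embedding $\overline{\Omega}$ to the entire row matrices $(0\ R_{22})$ and $(I\ R_{11}^{-1}R_{12})$, whose rows lie in the fixed subspace $\operatorname{range}(A^T)$, obtains the blocks $(\mathcal C\ \ \mathcal C N+C^{(2)})$ and $(\mathcal N\ \ \mathcal N N+R_{11}^{-1}C^{(1)})$ directly from Lemma~\ref{form_lemma}, and only afterwards invokes Remark~\ref{remark_bound_embedding} to convert $\|\mathcal N\|_2,\|\mathcal C\|_2$ back into $\tilde A_1$ quantities. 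You instead peel off the $\tilde A_1$ contribution first and embed only the $\tilde A_2$ block via $\overline\Omega_{22}$. Both routes use the same ingredients (Lemma~\ref{form_lemma}, the sRRQR inequalities for $B$ and $\tilde A_1$, Remark~\ref{remark_bound_embedding}); the paper's ordering has the advantage that its first embedding step stays inside $\operatorname{range}(A^T)$, the one subspace for which $\Omega$ is a priori an embedding, while yours yields slightly cleaner bookkeeping on the $\tilde A_1$ side. A second, smaller difference: the paper obtains the comparison $1/\sigma_{\min}(\tilde R_{11})\le\sqrt{(1+\epsilon)(1+f^2k(p-k))}/\sigma_{\min}(R_{11}^B)$ from the algebraic identity $(R_{11}^B)^TR_{11}^B=\mathcal N^T\tilde R_{11}^T\tilde R_{11}\mathcal N+\mathcal C^T\mathcal C$ (a rewriting of \eqref{eq:my_labeled_equation}), whereas you route it through interlacing and Corollary~\ref{bound_singular_values}.

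One point in your write-up needs care. Remark~\ref{remark_embedding submatrix} is stated for column submatrices of $\Omega$ that keep \emph{all} $l$ rows, so it does not literally apply to $\overline\Omega_{22}^{\,T}\in\mathbb{R}^{(l-\kk)\times(n-p)}$. The step is nonetheless correct once you observe that the lower-left block of $\overline\Omega$ vanishes by construction, so for every $v$ one has $\|v^T\tilde A_2\,\overline\Omega_{22}\|_2=\|v^T\tilde A_2\,(0\ \overline\Omega_{22})\|_2$, and $(0\ \overline\Omega_{22})$ is (up to the permutation $\Pi^B$) exactly the full-row column submatrix $\Omega_{:,I_2}^{\,T}$ to which Remark~\ref{remark_embedding submatrix} applies verbatim. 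This use of the remark for a data-dependent index set $I_2$ is the same informal move the paper itself makes in Remark~\ref{remark_bound_embedding} for $I_1$, so you are on no shakier ground than the original.
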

where $\omega^{*}=O(\frac{\sqrt{s}p}{\ell})$
\begin{proof}
    \begin{align*}
 \alpha^{2}+\|\begin{pmatrix}
        I & R_{11}^{-1}R_{12}
    \end{pmatrix}\|_{2}^{2}
    & = \frac{\sigma_{max}^{2}(R_{22})}{\sigma_{min}^{2}(R_{11})}+\|\begin{pmatrix}
        I & R_{11}^{-1}R_{12}
    \end{pmatrix}\|_{2}^{2}\\
    &= \frac{\left\|\begin{pmatrix}
       0 & \tilde{R}_{22} & (\tilde{Q}^{T})^{(2)}\tilde{A}_{2}
    \end{pmatrix}\right\|_{2}^{2}}{\sigma_{min}^{2}(R_{11})}+\left\|\begin{pmatrix}
      I &  R_{11}^{-1}\tilde{R}_{12} & R_{11}^{-1}(\tilde{Q}^{T})^{(1)}\tilde{A}_{2}
    \end{pmatrix}\right\|_{2}^{2}
    \end{align*}
We have that $range(\begin{pmatrix}
       0 & \tilde{R}_{22} & (\tilde{Q}^{T})^{(2)}\tilde{A}_{2}
    \end{pmatrix})$ belongs to $range(A^{T})$, so using $(\epsilon,\delta,r)-$OSE $\Omega$ for $range(A^{T})$ we obtain the following:
\begin{align*}
    \frac{\left\|\begin{pmatrix}
       0 & \tilde{R}_{22} & (\tilde{Q}^{T})^{(2)}\tilde{A}_{2}
    \end{pmatrix}\right\|_{2}^{2}}{\sigma_{min}^{2}(R_{11})}& \leq \frac{1}{1-\epsilon}\left[\frac{\left\|\begin{pmatrix}
       0 & \tilde{R}_{22} & (\tilde{Q}^{T})^{(2)}\tilde{A}_{2}
    \end{pmatrix}\begin{pmatrix}
        \tilde{\Pi}^{T} & \\
         & I
    \end{pmatrix}\overline{\Omega}\right\|_{2}^{2}}{\sigma_{min}^{2}(R_{11})}\right]\\
    &\leq \frac{1}{1-\epsilon}\left[\frac{\left\|\begin{pmatrix}
   \mathcal{C} & \begin{pmatrix}
        0 & \tilde{R}_{22}
    \end{pmatrix}\tilde{\Pi}^{T}\overline{\Omega}_{12}+(\tilde{Q}^{T})^{(2)}\tilde{A}_{2}\overline{\Omega}_{22}\end{pmatrix}\right\|_{2}^{2}}{\sigma_{min}^{2}(R_{11})}\right]\\
    &\leq \frac{1}{1-\epsilon}\left[\frac{\left\|\begin{pmatrix}
        \mathcal{C} & \mathcal{C}N+C^{(2)}
    \end{pmatrix}\right\|_{2}^{2}}{\sigma_{min}^{2}(R_{11})}\right]\\
    &\leq \frac{1}{1-\epsilon}\left[\frac{\left\|\mathcal{C}\right\|_{2}^{2}}{\sigma_{min}^{2}(R_{11})}+\frac{\left\|\mathcal{C}N+C^{(2)}\right\|_{2}^{2}}{\sigma_{min}^{2}(R_{11})}\right].\\
    \intertext{As $range\left(\begin{pmatrix}
        0 & \tilde{R}_{22}
    \end{pmatrix}^{T}\right)$ $\subseteq range(\tilde{A}_{1}^{T})$, using remark \ref{remark_bound_embedding} we obtain:}
    \frac{\left\|\begin{pmatrix}
       0 & \tilde{R}_{22} & (\tilde{Q}^{T})^{(2)}\tilde{A}_{2}
    \end{pmatrix}\right\|_{2}^{2}}{\sigma_{min}^{2}(R_{11})}&\leq \frac{1}{1-\epsilon}\left[\omega^{*}\frac{\|\tilde{R}_{22}\|_{2}^{2}}{\sigma_{min}^{2}(R_{11})}+\frac{2\|\mathcal{C}\|_{2}^{2}\|N\|_{2}^{2}}{\sigma_{min}^{2}(R_{11})}+\frac{2\|C^{(2)}\|_{2}^{2}}{\sigma_{min}^{2}(R_{11})}\right]\tag{*}\label{term_1}.\\
\end{align*}
In this inequality we also used the fact that 
$\|a+b\|_{2}^{2}\leq (\|a\|_{2}^{2}+\|b\|_{2}^{2}+2\|a\|_{2}\|b\|_{2})\leq 2(\|a\|_{2}^{2}+\|b\|_{2}^{2}).$
Similarly we have that $range(\begin{pmatrix}
      I &  R_{11}^{-1}\tilde{R}_{12} & R_{11}^{-1}(\tilde{Q}^{T})^{(1)}\tilde{A}_{2}
    \end{pmatrix})$ belongs to $range(A^{T})$, so using $(\epsilon, \delta, r)-$OSE $\Omega$ for $range(A^{T})$ we obtain the following:
\begin{align*}
    \left\|\begin{pmatrix}
      I &  R_{11}^{-1}\tilde{R}_{12} & R_{11}^{-1}(\tilde{Q}^{T})^{(1)}\tilde{A}_{2}
    \end{pmatrix}\right\|_{2}^{2}&\leq \frac{1}{1-\epsilon}\left[\left\|\begin{pmatrix}
      I &  R_{11}^{-1}\tilde{R}_{12} & R_{11}^{-1}(\tilde{Q}^{T})^{(1)}\tilde{A}_{2}
    \end{pmatrix}\begin{pmatrix}
        \tilde{\Pi}^{T} & \\
         & I
    \end{pmatrix}\overline{\Omega}\right\|_{2}^{2}\right]\\
     &\leq \frac{1}{1-\epsilon}\left[\left\|\begin{pmatrix}
  \mathcal{N} & \begin{pmatrix}
        I & \tilde{R}_{11}^{-1}\tilde{R}_{12}
    \end{pmatrix}\tilde{\Pi}^{T}\overline{\Omega}_{12}+(R_{11}^{-1}(\tilde{Q}^{T})^{(1)}\tilde{A}_{2}\overline{\Omega}_{22}\end{pmatrix}\right\|_{2}^{2}\right]\\
    &\leq \frac{1}{1-\epsilon}\left[\left\|\begin{pmatrix}
        \mathcal{N} & \mathcal{N}N+R_{11}^{-1}C^{(1)}
    \end{pmatrix}\right\|_{2}^{2}\right]\\
    &\leq\frac{1}{1-\epsilon}\left[\left\|\mathcal{N}\right\|_{2}^{2}+ 2\left\|\mathcal{N}N\right\|_{2}^{2}+2\left\|R_{11}^{-1}C^{(1)}\right\|_{2}^{2}\right].\\
    \intertext{As $range(\begin{pmatrix}
        I & \tilde{R}_{11}^{-1}\tilde{R}_{12}
    \end{pmatrix}^{T}) \subseteq$ $range(\tilde{A}_{1}^{T})$, then using remark \ref{remark_bound_embedding} we obtain}
   \left\|\begin{pmatrix}
      I &  R_{11}^{-1}\tilde{R}_{12} & R_{11}^{-1}(\tilde{Q}^{T})^{(1)}\tilde{A}_{2}
    \end{pmatrix}\right\|_{2}^{2} &\leq \frac{1}{1-\epsilon}\left[\omega^{*}\left(\left\|\begin{pmatrix}
        I & \tilde{R}_{11}^{-1}\tilde{R}_{12}
    \end{pmatrix}\right\|_{2}^{2}+2\|\begin{pmatrix}
        I & \tilde{R}_{11}^{-1}\tilde{R}_{12}
        \end{pmatrix}N\|_{2}^{2}\right)+2\|R_{11}^{-1}C^{(1)}\|_{2}^{2}\right].\tag{**}\label{term_2}
\end{align*}
Combining the first terms in inequalities \eqref{term_1} and \eqref{term_2} we obtain
\begin{align*}
\omega^{*}\left(1+\|\tilde{R}_{11}^{-1}\tilde{R}_{12}\|_{2}^{2}+\frac{\|\tilde{R}_{22}\|_{2}^{2}}{\sigma_{min}^{2}(R_{11})}\right)&\leq \omega^{*}\left(1+f^{2}k(p-k)\right).
\end{align*}
For the second terms of \eqref{term_1} and \eqref{term_2} we have
\begin{align*}
    2\omega^{*}\|N\|_{2}^{2}\left(\|\begin{pmatrix}
        I & \tilde{R}_{11}^{-1}\tilde{R}_{12}
    \end{pmatrix}\|_{2}^{2}+\frac{\|\begin{pmatrix}
        0 & \tilde{R}_{22}
    \end{pmatrix}\|_{2}^{2}}{\sigma_{min}^{2}(R_{11})}\right)
    &\leq \omega^{*}\|N\|_{2}^{2}\left(1+f^{2}k(p-k)\right).
\end{align*}
For the third terms, using the fact that $\|C\|_{2}^{2}\leq \|R_{22}^{B}\|_{2}^{2}$ we obtain
\begin{align*}
  \frac{2}{\sigma_{min}^{2}(R_{11})}\left(\|C^{(1)}\|_{2}^{2}+\|C^{(2)}\|_{2}^{2}\right) &\leq \frac{4\|R_{22}^{B}\|_{2}^{2}}{\sigma_{min}^{2}(R_{11})}.
\end{align*}
Furthermore, equation \eqref{eq:my_labeled_equation} can be rewritten as 
$$(R_{11}^{B})^{T}R_{11}^{B}=\mathcal{N}^{T}\tilde{R}_{11}^{T}\tilde{R}_{11}\mathcal{N}+\mathcal{C}^{T}\mathcal{C},$$
which implies that
\begin{align*}
    \sigma_{min}^{2}(R_{11}^{B})&\leq \sigma_{min}^{2}(\tilde{R}_{11})\|\mathcal{N}\|_{2}^{2}+\|\mathcal{C}\|_{2}^{2}\\
    &\leq \omega^{*} \sigma_{min}^{2}(\tilde{R}_{11})\left\|\begin{pmatrix}
        I & \tilde{R}_{11}^{-1}\tilde{R}_{12} \\
         & \tilde{R}_{22}/(\sigma_{min}(\tilde{R}_{11}))
    \end{pmatrix}\right\|_{2}^{2}\leq \omega^{*}\left(1+f^{2}k(p-k)\right)\sigma_{min}^{2}(\tilde{R}_{11}),
\end{align*}
which in turn leads to 
$$1/\sigma_{min}^{2}(R_{11})\leq (1
+\epsilon)\left(1+f^{2}k(p-k)\right)/\sigma_{min}^{2}(R_{11}^{B}).$$
Hence, this leads to
\begin{align*}
    \frac{4\|R_{22}^{B}\|_{2}^{2}}{\sigma_{min}^{2}(R_{11})}\leq \omega^{*}(1+f^{2}k(p-k))\frac{4\|R_{22}^{B}\|_{2}^{2}}{\sigma_{min}^{2}(R_{11}^{B})}
\end{align*}
Combining the three terms together gives
\begin{align*}
\alpha^{2}+\|\begin{pmatrix}
    I & R_{11}^{-1}R_{12}
\end{pmatrix}\|_{2}^{2}&\leq \frac{4\omega^{*}}{1-\epsilon}\left[\left((1+f^{2}k(p-k)\right)\left(1+\|(R_{11}^{B})^{-1}R_{12}^{B}\|_{2}^{2}+\frac{\|R_{22}^{B}\|_{2}^{2}}{\sigma_{min}^{2}(R_{11}^{B})}\right)\right]\\
&\leq \frac{4\omega^{*}}{1-\epsilon}(1+f^{2}k(p-k)(1+f^{2}\kk(\ell-\kk)).
\end{align*}
\end{proof}
\begin{theorem}
    \label{maintheroem}
    Let $\tilde{A}_{1}$ and $B$ be the matrices obtained from \RandWSQR{} on A presented in Algorithm \ref{algorithm1} using oblivious sparse embedding $\Omega$ for $range(A^{T})$. If sRRQR is applied to both matrices then the resulting permutation $\Pi$ satisfies the strong rank revealing QR properties such that 
  \begin{equation}
  \label{result_1}
  1\leq \frac{\sigma_{i}(A)}{\sigma_{i}(R_{11})} ,\frac{\sigma_{j}(R_{22})}{\sigma_{j+k}(A)}\leq \rho_{1}(k,n)
  \end{equation} 
  \begin{equation} 
  \label{result_2}
  \left\| R_{11}^{-1} R_{12} \right\|_{2} \leq \rho_{2}(k,n) 
  \end{equation}
    with $\rho_{1}(k,n)\leq \sqrt{ 1+\frac{4\omega^{*}}{1-\epsilon}(1+f^{2}k(p-k)(1+f^{2}\kk(\ell-\kk))}$ and\\ $\rho_{2}(k,n)\leq \sqrt{ \frac{2\omega^{*}}{1-\epsilon}(1+f^{2}k(p-k)(1+f^{2}\kk(\ell-\kk))}$ and $\omega=O(\frac{\sqrt{s}p}{\ell})$.
\end{theorem}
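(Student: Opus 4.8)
The plan is to read off the three strong rank-revealing estimates \eqref{result_1}--\eqref{result_2} from the single spectral inequality of Lemma~\ref{bound_lemma}. Throughout set $M=R_{11}^{-1}R_{12}$, $\alpha=\sigma_{\max}(R_{22})/\sigma_{\min}(R_{11})$, and $G=\frac{4(1+\epsilon)}{1-\epsilon}\bigl(1+f^{2}k(p-k)\bigr)\bigl(1+f^{2}\kk(l-\kk)\bigr)$, so that Lemma~\ref{bound_lemma} gives $\alpha^{2}+\bigl\|\begin{pmatrix} I & M\end{pmatrix}\bigr\|_{2}^{2}\le G$, while inequality \eqref{term_2} inside its proof bounds $\bigl\|\begin{pmatrix} I & M\end{pmatrix}\bigr\|_{2}$ alone by $\rho_{2}(k,n)=\sqrt{\tfrac12 G}$. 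Every quantity appearing in \eqref{result_1}--\eqref{result_2}, as well as $\alpha$, $M$ and $G$, is unchanged if $A$ is replaced by $cA$ (which rescales $R_{11},R_{12},R_{22}$ by $c$), so I may assume $\sigma_{\min}(R_{11})=1$; then $\|R_{22}\|_{2}=\alpha$ and every singular value of $R_{11}$ is $\ge 1$. Since $A\Pi=QR$ with $Q$ orthogonal and $R=\begin{pmatrix} R_{11} & R_{12}\\ 0 & R_{22}\end{pmatrix}$, we have $\sigma_{i}(A)=\sigma_{i}(R)$, and only the two upper bounds in \eqref{result_1} and the bound \eqref{result_2} remain.

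The lower bounds ``$1\le$'' in \eqref{result_1} are Cauchy interlacing: $R_{11}$ is a leading $k\times k$ submatrix of $R$, so $\sigma_{i}(R_{11})\le\sigma_{i}(R)=\sigma_{i}(A)$; and $\begin{pmatrix} 0 & R_{22}\end{pmatrix}$ is $R$ with $k$ rows deleted, so $\sigma_{j}(R_{22})=\sigma_{j}\bigl(\begin{pmatrix} 0 & R_{22}\end{pmatrix}\bigr)\ge\sigma_{j+k}(R)=\sigma_{j+k}(A)$. For \eqref{result_2}, since $\bigl\|\begin{pmatrix} I & M\end{pmatrix}\bigr\|_{2}^{2}=1+\|M\|_{2}^{2}$, we get $\|R_{11}^{-1}R_{12}\|_{2}=\|M\|_{2}\le\bigl\|\begin{pmatrix} I & M\end{pmatrix}\bigr\|_{2}\le\rho_{2}(k,n)$.

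For $\sigma_{i}(A)/\sigma_{i}(R_{11})\le\rho_{1}(k,n)$ I would use
\[
R=\begin{pmatrix} R_{11} & R_{12}\\ 0 & R_{22}\end{pmatrix}=\begin{pmatrix} R_{11} & 0\\ 0 & I_{d-k}\end{pmatrix}\begin{pmatrix} I_{k} & M\\ 0 & R_{22}\end{pmatrix}
\]
together with $\sigma_{i}(XY)\le\sigma_{i}(X)\|Y\|_{2}$. Because $\sigma_{\min}(R_{11})=1$, the block-diagonal factor has singular values $\{\sigma_{1}(R_{11}),\dots,\sigma_{k}(R_{11})\}\cup\{1,\dots,1\}$ with all $\sigma_{t}(R_{11})\ge 1$, so its $i$-th largest singular value is $\sigma_{i}(R_{11})$ for $1\le i\le k$; hence $\sigma_{i}(A)\le\sigma_{i}(R_{11})\bigl\|\begin{pmatrix} I & M\\ 0 & R_{22}\end{pmatrix}\bigr\|_{2}$ for those $i$. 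Feeding in the elementary estimate $\bigl\|\begin{pmatrix} I & M\\ 0 & S\end{pmatrix}\bigr\|_{2}^{2}\le 1+\|M\|_{2}^{2}+\|S\|_{2}^{2}$ (verified by bounding $\|x_{1}+Mx_{2}\|^{2}+\|Sx_{2}\|^{2}$ and using $(\|M\|_{2}\|x_{1}\|-\|x_{2}\|)^{2}+\|S\|_{2}^{2}\|x_{1}\|^{2}\ge 0$) and Lemma~\ref{bound_lemma} gives $\sigma_{i}(A)/\sigma_{i}(R_{11})\le\sqrt{1+\|M\|_{2}^{2}+\alpha^{2}}=\sqrt{\bigl\|\begin{pmatrix} I & M\end{pmatrix}\bigr\|_{2}^{2}+\alpha^{2}}\le\sqrt{G}\le\sqrt{1+G}=\rho_{1}(k,n)$.

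The remaining estimate $\sigma_{j}(R_{22})/\sigma_{j+k}(A)\le\rho_{1}(k,n)$ is the main obstacle, because a columnwise appeal to Theorem~\ref{srrqr_theorem} with the per-column bound $\gamma_{j}^{2}(R_{11}^{-1}R_{12})+\gamma_{j}^{2}(R_{22})/\sigma_{\min}^{2}(R_{11})\le\|M\|_{2}^{2}+\alpha^{2}$ would reintroduce a spurious factor $n-k$. What is needed instead is the operator-norm strengthening of Theorems~\ref{theorem 2.4}/\ref{srrqr_theorem}: if $\|R_{11}^{-1}R_{12}\|_{2}^{2}+\|R_{22}\|_{2}^{2}/\sigma_{\min}^{2}(R_{11})\le F^{2}$, then $\sigma_{j}(R_{22})\le\sqrt{1+F^{2}}\,\sigma_{j+k}(A)$ for $1\le j\le d-k$. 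I would prove this by the Gu--Eisenstat argument behind Theorem~\ref{theorem 2.4}, carrying spectral norms instead of summing columnwise contributions: writing $\begin{pmatrix} 0\\ R_{22}\end{pmatrix}=R\begin{pmatrix} -M\\ I\end{pmatrix}$, using $1/\sigma_{j+k}(A)=1/\sigma_{j+k}(R)=\sigma_{d-k-j+1}(R^{+})$, and comparing $R^{+}$ with the block-triangular right inverse $\operatorname{diag}(R_{11}^{-1},I)\begin{pmatrix} I & -R_{12}R_{22}^{+}\\ 0 & R_{22}^{+}\end{pmatrix}$, whose relevant singular value---after the normalization $\sigma_{\min}(R_{11})=1$---is at most $\sqrt{1+F^{2}}/\sigma_{j}(R_{22})$ (the rank-deficient case of $R_{22}$ being absorbed by the convention $\sigma_{i}(A)\ne 0$ from Definition~\ref{def:strongRRQR}). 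Granting this, choose $F^{2}=\|R_{11}^{-1}R_{12}\|_{2}^{2}+\alpha^{2}=\bigl\|\begin{pmatrix} I & M\end{pmatrix}\bigr\|_{2}^{2}-1+\alpha^{2}\le G-1$ by Lemma~\ref{bound_lemma}, whence $\sigma_{j}(R_{22})/\sigma_{j+k}(A)\le\sqrt{1+F^{2}}\le\sqrt{G}\le\sqrt{1+G}=\rho_{1}(k,n)$, completing \eqref{result_1}. The delicate step I expect to cost the most work is exactly keeping the constant in this last estimate expressed through the \emph{spectral} norms $\|R_{11}^{-1}R_{12}\|_{2}$ and $\|R_{22}\|_{2}/\sigma_{\min}(R_{11})$---the quantities Lemma~\ref{bound_lemma} actually controls---rather than their Frobenius analogues, which carry the extra $n-k$.
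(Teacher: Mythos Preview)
Your argument is correct, and for the bound $\sigma_i(A)/\sigma_i(R_{11})$ as well as for $\rho_2(k,n)$ it matches the paper (the paper also reads $\rho_2$ off inequality \eqref{term_2}, and its factorization $A\Pi=Q\operatorname{diag}(R_{11},R_{22}/\alpha)\begin{pmatrix} I & M\\ 0 & \alpha I\end{pmatrix}$ is exactly your normalization $\sigma_{\min}(R_{11})=1$ in disguise). Where you diverge is on the $R_{22}$ bound, which you flag as the main obstacle and attack via $R^{+}$ and a block right inverse. The paper avoids this entirely with a second, dual factorization symmetric to the first:
\[
\begin{pmatrix} \alpha R_{11} & 0\\ 0 & R_{22}\end{pmatrix}
=\begin{pmatrix} R_{11} & R_{12}\\ 0 & R_{22}\end{pmatrix}
\begin{pmatrix} \alpha I & -R_{11}^{-1}R_{12}\\ 0 & I\end{pmatrix}.
\]
The $\alpha$-scaling on $R_{11}$ now pushes all singular values of the top block above $\sigma_{\max}(R_{22})$, so the last $d-k$ singular values of the block diagonal are precisely $\sigma_j(R_{22})$; one application of $\sigma_i(XY)\le\sigma_i(X)\|Y\|_2$ then gives
$\sigma_j(R_{22})\le\sigma_{j+k}(A)\bigl\|\begin{pmatrix}\alpha I & -M\\ 0 & I\end{pmatrix}\bigr\|_2\le\sigma_{j+k}(A)\sqrt{1+\alpha^2+\|(I\ M)\|_2^2}$,
and Lemma~\ref{bound_lemma} finishes. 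This is the same multiplicative singular-value inequality you already invoked for the first half, so no pseudo-inverse machinery or separate ``operator-norm Gu--Eisenstat'' lemma is needed; the argument stays at the level of two symmetric block factorizations. Your route would also work, but the paper's trick is shorter and sidesteps the rank-deficient $R_{22}$ case you had to bracket off.
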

\begin{proof}
    Here we give the formulation presented in \cite{meng2013low}.
    We start by writing 
    $$A\Pi = Q\begin{pmatrix}
        R_{11} & \\
         & R_{22}/\alpha
    \end{pmatrix}\begin{pmatrix}
        I & R_{11}^{-1}R_{12}\\
         & \alpha I 
    \end{pmatrix}.$$
It follows that for $1\leq i \leq k$, using Weyl's inequality 
 $$\sigma_{i}(A)\leq \sigma_{i}\left( \begin{pmatrix}
        R_{11} & \\
         & R_{22}/\alpha
    \end{pmatrix}\right)\left\|\begin{pmatrix}
        I & R_{11}^{-1}R_{12}\\
         & \alpha I 
    \end{pmatrix} \right\|_{2}.$$
 The largest $k$ singular values of $\begin{pmatrix} R_{11} & 0 \\ 0 & R_{22}/\alpha \end{pmatrix}$ are those of $R_{11}$, and 
    $$\left\|\begin{pmatrix}
        I & R_{11}^{-1}R_{12}\\
         & \alpha I 
    \end{pmatrix} \right\|_{2}\leq \sqrt{\alpha^{2}+\|\begin{pmatrix}
        I & R_{11}^{-1}R_{12}
    \end{pmatrix}\|_{2}^{2}}.$$
    This gives
    $$\sigma_{i}(A)\leq \sqrt{\alpha^{2}+\|\begin{pmatrix}
        I & R_{11}^{-1}R_{12}
    \end{pmatrix}\|_{2}^{2}} \sigma_{i}(R_{11}).$$
On the other side,
$$\begin{pmatrix}
    \alpha R_{11} & \\
     & R_{22}
\end{pmatrix}=\begin{pmatrix}
    R_{11} & R_{12} \\
     & R_{22}
\end{pmatrix}\begin{pmatrix}
    \alpha I & -R_{11}^{-1}R_{12}\\
     & I
\end{pmatrix}.$$
The last $\sM - k$ singular values of $\begin{pmatrix} \alpha R_{11} & 0 \\ 0 & R_{22} \end{pmatrix}$ are those of $R_{22}$.
Hence, for $1\leq j\leq \sM-k$, we have
$$\sigma_{j}(R_{22})\leq \sigma_{j+k}(A)\sqrt{1+\alpha^{2}+\|\begin{pmatrix}
        I & R_{11}^{-1}R_{12}
    \end{pmatrix}\|_{2}^{2}},$$
Combining the above inequalities with \ref{bound_lemma} gives the bound in \eqref{result_1}.  

For the second inequality \eqref{result_2}, one observes that, using \eqref{term_2}
\begin{align*}
    \|R_{11}^{-1}R_{12}\|_{2}^{2}&\leq \left \|\begin{pmatrix}
        I & R_{11}^{-1}R_{12}
    \end{pmatrix}\right\|_{2}^{2}\\
    &\leq \frac{1}{1-\epsilon}\left[\omega^{*}\left(\left\|\begin{pmatrix}
        I & \tilde{R}_{11}^{-1}\tilde{R}_{12}
    \end{pmatrix}\right\|_{2}^{2}+2\|\begin{pmatrix}
        I & \tilde{R}_{11}^{-1}\tilde{R}_{12}
        \end{pmatrix}N\|_{2}^{2}\right)+2\|R_{11}^{-1}C^{(1)}\|_{2}^{2}\right] \\ 
\intertext{Applying a similar argument as in \ref{bound_lemma} completes the proof.}
\end{align*}
\end{proof}

\subsection{\RandWSQR{} using non-oblivious sparse embeddings}
In this section, we consider the case where $A$ is an orthogonal matrix or has an approximate leverage scores. We show that in such settings, using a non-oblivious sparse embeddings such as Leverage Score Sparsification (LESS) instead of oblivious one in \RandWSQR{} factorization gives a tighter bound $\rho_{1}(k,n)$ and $\rho_{2}(k,n)$. Indeed, the maximum number of nonzero entries in rows of LESS embedding $\Omega$ by \ref{leverage_embedding} is independent of $n$, the larger dimension, and thus both parameters $p$ and $\ell$ in Theorem \ref{maintheroem} depends only on the smaller dimension $d$.
The next theorem gives the bound for the strong rank-revealing properties of the \RandWSQR{} factorization using LESS embedding. 
   \begin{theorem}
       \label{maintheorem_leverage}
       Let $A\in \mathbb{R}^{d\times n}$ with $d\ll n$ and $\Omega\in \mathbb{R}^{\ell\times n}$ be a non-oblivious LESS embedding for $range(A^{T})$. If $\tilde{A}_{1}$ and $B$ are the matrices obtained from \RandWSQR{} on $A$ presented in Algorithm \ref{algorithm1} and sRRQR is applied to both matrices with constant $f$ then, with high probability, the resulting permutation $\Pi$ verifies inequalities \eqref{result_1} and \eqref{result_2} with
        $$\rho_{1}(k,n)=\rho_{2}=O\left(\frac{k\sqrt{\kk}\log^{4}(d)\left(\frac{d}{\epsilon^{2}}-\kk\right)^{1/2}}{l_{min}^{1/2}\epsilon^{4}(1-\epsilon)^{1/2}}\right)$$
   \end{theorem}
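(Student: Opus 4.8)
The plan is to re-run the proof of Theorem \ref{maintheroem} almost verbatim, exploiting the fact that it uses the sparse embedding only through two features: (i) the $\epsilon$-embedding property of $\Omega$ for $\operatorname{range}(A^{T})$, which drives Lemma \ref{form_lemma} and the core estimates of Lemma \ref{bound_lemma}; and (ii) a bound on the number of nonzero entries per row of $\Omega$, which controls the number of columns $p$ of the induced set $\tilde{A}_{1}$. For a LESS embedding, Theorem \ref{leverage_embedding} supplies both: fixing $\delta$ to be a constant, $\Omega$ is an $(\epsilon,\delta,d)$-embedding for $\operatorname{range}(A^{T})$ with $l=O(\max(d,\log(1/\delta))/\epsilon^{2})=O(d/\epsilon^{2})$, and, with probability at least $1-\delta$, every row of $\Omega$ has $O(\log^{4}(d/\delta)/\epsilon^{4})$ nonzeros. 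Since each pivot column of $B=A\Omega^{T}$ selected by sRRQR is a linear combination of exactly the columns of $A$ indexed by the nonzeros of the corresponding row of $\Omega$, I would first conclude that on this event
$$p\;\le\;\kk\cdot\max_{i}\bigl(\text{number of nonzeros in row }i\text{ of }\Omega\bigr)\;=\;O\!\left(\kk\,\log^{4}(d/\delta)/\epsilon^{4}\right),$$
so that, unlike in the oblivious case, both $l$ and $p$ depend only on $d,\kk,\epsilon$ and $\log(d/\delta)$, never on $n$.

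I would then invoke Theorem \ref{maintheroem} with these values. It already yields inequalities \eqref{result_1}--\eqref{result_2} with
$$\rho_{1}(k,n),\ \rho_{2}(k,n)\;\le\;\sqrt{1+\tfrac{4(1+\epsilon)}{1-\epsilon}\bigl(1+f^{2}k(p-k)\bigr)\bigl(1+f^{2}\kk(l-\kk)\bigr)},$$
and what remains is purely arithmetic: substitute $l=O(d/\epsilon^{2})$ and the bound on $p$, treat $f$ and $1+\epsilon$ as $O(1)$, keep the $1/\sqrt{1-\epsilon}$ factor, and absorb the constant $\delta$ into the logarithm. The factor $\bigl(1+f^{2}\kk(l-\kk)\bigr)^{1/2}$ collapses to $O\bigl(\kk^{1/2}(d/\epsilon^{2}-\kk)^{1/2}\bigr)$ and $\bigl(1+f^{2}k(p-k)\bigr)^{1/2}$ to $O\bigl(k^{1/2}\kk^{1/2}\log^{2}(d)/\epsilon^{2}\bigr)$; multiplying the two pieces yields a bound of the order
$$O\!\left(\frac{k\,\sqrt{\kk}\,\log^{4}(d)\,(d/\epsilon^{2}-\kk)^{1/2}}{\epsilon^{4}\,(1-\epsilon)^{1/2}}\right),$$
which holds with probability at least $1-\delta$; the salient point is that no power of $n$ survives the substitution.

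The step I expect to be the real obstacle is verifying that the auxiliary facts used in Lemma \ref{bound_lemma} through Remark \ref{remark_bound_embedding} still hold for a \emph{non-oblivious} $\Omega$ — namely that the row-submatrix $\begin{pmatrix}\overline{\Omega}_{11}&\overline{\Omega}_{12}\end{pmatrix}$ acts as an $\epsilon$-embedding on $\operatorname{range}(\tilde{A}_{1})$ and on $\operatorname{range}(\tilde{A}_{1}^{T})$, together with the consequent comparison of $\sigma_{\min}(R_{11}^{B})$ with $\sigma_{\min}(\tilde{R}_{11})$. In the oblivious case this is free from Remark \ref{remark_embedding submatrix}, because a column-subset of an oblivious embedding is again an oblivious embedding and thus embeds any fixed low-dimensional subspace. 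That argument is unavailable here: $\Omega$ is built from the leverage scores of $A^{T}$, and these can be strictly smaller than the true leverage scores of the submatrix $\tilde{A}_{1}^{T}$, so the column-subset of $\Omega$ corresponding to $\tilde{A}_{1}$ need not embed $\operatorname{range}(\tilde{A}_{1}^{T})$. I would circumvent this exactly as in the second bullet of Remark \ref{remark_bound_embedding}, bounding
$$\Bigl\|\tilde{A}_{1}\begin{pmatrix}\overline{\Omega}_{11}&\overline{\Omega}_{12}\end{pmatrix}\Bigr\|_{2}^{2}\;\le\;\|\tilde{A}_{1}\|_{2}^{2}\,\Bigl\|\begin{pmatrix}\overline{\Omega}_{11}&\overline{\Omega}_{12}\end{pmatrix}\Bigr\|_{1}\,\Bigl\|\begin{pmatrix}\overline{\Omega}_{11}&\overline{\Omega}_{12}\end{pmatrix}\Bigr\|_{\infty},$$
and controlling the two matrix norms directly via the per-row nonzero bound of Theorem \ref{leverage_embedding} rather than a balls-and-bins load estimate; this reintroduces only the $\log^{4}(d)/\epsilon^{4}$-type factor already present in $\rho$. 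Alternatively, one may enlarge $\kk$ so that $p\ge l$ and reason on the sketch $B$ directly. Apart from this point, and the bookkeeping of the final simplification, every inequality in Lemmas \ref{form_lemma}, \ref{bound_lemma} and in the proof of Theorem \ref{maintheroem} carries over word for word.
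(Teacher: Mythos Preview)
Your plan is correct and coincides with the paper's proof: the paper also observes that the $(1+\epsilon)$-embedding bound \eqref{equation1_bound} for the submatrix $\begin{pmatrix}\overline{\Omega}_{11}&\overline{\Omega}_{12}\end{pmatrix}$ is unavailable in the non-oblivious case and replaces it by the crude estimate $\|\tilde{A}_{1}\begin{pmatrix}\overline{\Omega}_{11}&\overline{\Omega}_{12}\end{pmatrix}\|_{2}^{2}\le\omega^{*}\|\tilde{A}_{1}\|_{2}^{2}$ with $\omega^{*}$ the maximum row load, exactly the workaround you propose via the second bullet of Remark~\ref{remark_bound_embedding}. It then re-runs Theorem~\ref{maintheroem} with $(1+\epsilon)$ replaced by $\omega^{*}$ and substitutes the LESS parameters $l=O(d/\epsilon^{2})$, $p=O(\kk\log^{4}(d)/\epsilon^{4})$, $\omega^{*}=O(\log^{4}(d)/\epsilon^{4})$ from Theorem~\ref{leverage_embedding}, just as you outline; note only that your final multiplication of the two pieces actually gives $\sqrt{k}\,\kk$ rather than $k\sqrt{\kk}$ (they agree when $\kk=k$).
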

   \begin{proof}
      First we note that as $\Omega$ is a non-oblivious embedding and the nonzero entries are no longer chosen uniformly at random. In this case, remark  \ref{remark_bound_embedding} gives
   $$\|\tilde{A}_{1}\begin{pmatrix}
       \overline{\Omega}_{11} & \overline{\Omega}_{12}
   \end{pmatrix}\|_{2}^{2}\leq \omega^{*}\|\tilde{A}_{1}\|_{2}^{2}$$
   where $\omega^{*}=O\left(\frac{\log^{4}(d/\delta)}{l_{min}\epsilon^{4}}\right)$. By combining this bound and Theorem \ref{maintheroem}, we get, with high probability, 
   $$\rho_{1}(k,n)=\sqrt{1+\frac{4\omega^{*}}{1-\epsilon}(1+f^{2}k(p-k)(1+f^{2}\kk(\ell-k'))}$$
   and 
   $$\rho_{2}(k,n)=\sqrt{\frac{2\omega^{*}}{1-\epsilon}(1+f^{2}k(p-k)(1+f^{2}\kk(\ell-k'))}$$
   Using the bounds of $p$ and $\omega^{*}$ from Theorem \ref{leverage_embedding} we get the desired function $\rho_{1}(k,n)$ and $\rho_{2}(k,n)$ with high probability.
 \end{proof}
We note here that the running time for computing the approximation of leverage scores is $$O(nd\log(d\epsilon^{-1}+nd\epsilon^{-2}\log(n)+d^{3}\epsilon^{-2}(\log(n))(\log(d\epsilon^{-1})))$$
as shown in \cite{drineas2012fast}, which increases the complexity of \RandWSQR{} factorization, however, when the leverage scores are known, the values of $p$ and $\ell$ are both independent of the larger dimension $n$ and gives a greater speedup of the algorithm for $n\gg d$.

\section{Numerical Results}
In this section we discuss the efficiency of \RandWSQR{} in terms of approximations of singular values and runtime for various wide and short matrices presented in Table \ref{tab:matrix_list}. 
All experiments were performed in MATLAB (version R2025b) on a Macbook Pro with an Apple M3 processor. The sparse embeddings are generated using the \textit{sparsesign} function, which is implemented in $C$ \cite{epperly2024fast}. The factorizations on the sketched matrix $B$ and reduced column set $\tilde{A}_{1}$ are performed using QR with column pivoting rather than strong RRQR, since in practice sRRQR was not performing any extra column permutation over QRCP. In these tests, we first plot the singular values of $R_{11}$ obtained from both \RandWSQR{} and QRCP with the singular value of $A$, $\sigma_{i}(A)$, obtained from SVD. To further illustrate the comparison, we also display the ratio $\sigma_{i}(R_{11})/\sigma_{i}(A)$ for both methods. In addition, we report the number of columns $p$ selected through the sRRQR of $B$, the sketch of $A$, to form the matrix $\tilde{A}_{2}$. In addition we provide a comparison between using OSE and LESS embedding in \RandWSQR{} in terms of accuracy, runtime and size $P$. Each experiment is repeated $20$ times and the result of the median is shown. For the matrices Fiedler, Chebvand and  Prolate, the matrix $A$ is formed by taking the transpose of the first $n \times \sM$ block.  The matrices used in Subsection \ref{experiments_countsketch} are of dimension $50\times 10000$,in Subsection \ref{experiments_s>1} are of dimension $200\times 10000$ and in subsections \ref{comparison subsection}-\ref{computational time} are of size $100\time 10^{6}$, unless otherwise specified. The rank $k$ is chosen so that the relative spectral norm error $\|R_{22}\|_{2}/\|A\|_{2}$ is sufficiently small and satisfies the values reported in Table \ref{tab:residual error}. In all experiments $k'$ is chosen to be $k$.

\begin{table}[htbp]
\centering
\begin{tabular}{|p{4cm}|p{8cm}|}
\hline
\textbf{Type} & \textbf{Description} \\ 
\hline
Exponential & Random matrix with exponential decaying singular values: $\sigma_{i}=\alpha^{i-1}$ with $\alpha=10^{\frac{-1}{11}}$\\ 
\hline
Quadratic & Random matrix with quadratic decaying singular values. \\ 
\hline
Random & Random Gaussian matrix with entries drawn from a standard normal distribution. \\ 
\hline
ROM & Random Gaussian matrix with $40$ and $100$ random columns replaced by outliers of magnitude $1000$ for matrix of size $50\times 10000$ and $200\times 10000$ respectively.   \\ 
\hline
Low rank matrices  & Random $50\times 10000$ and $200\times
 10000$ matrices of rank $30$ and $100$ respectively. \\ 
\hline
Fiedler & Fiedler symmetric matrix where the entries are defined by the absolute differences between elements. It is ill conditioned.\\ 
\hline
Chebvand & Vandermonde-like matrix for the Chebyshev polynomials. It is ill-conditioned \\ 
\hline
Prolate & Ill-conditioned Toeplitz matrix.\\
\hline
Genomic Dataset & RNA sequence dataset from The Cancer Genome Atlas (TCGA) Breast Invasive Carcinoma.\\ 
\hline
\end{tabular}
\caption{List of matrices used in the experiments.}
\label{tab:matrix_list}
\end{table}
\subsection{\RandWSQR{} with CountSketch oblivious sparse embedding} 

\label{experiments_countsketch}
Figure~\ref{fig:results_s1} shows the comparison between the first $k$ singular values $\sigma_{i}(R_{11})$ obtained from \RandWSQR{} using a CountSketch matrix ($s=1$), deterministic QRCP, and the exact values from SVD across four different matrices: a matrix with exponential decay, a matrix with quadratic decay, a Prolate matrix, and a Chebvand matrix. The sketching dimension $\ell=2500$. 

Figure~\ref{fig:placeholder} presents the corresponding ratios of these singular values, $\sigma_{i}(R_{11})/\sigma_{i}(A)$. The results demonstrate that although the approximation is not close to the SVD baseline, \RandWSQR{} performs similarly to deterministic QRCP and achieves a highly comparable performance.

\begin{figure}[H]
    \centering
    \begin{subfigure}[b]{0.48\textwidth}
        \centering
        \includegraphics[width=\textwidth]{"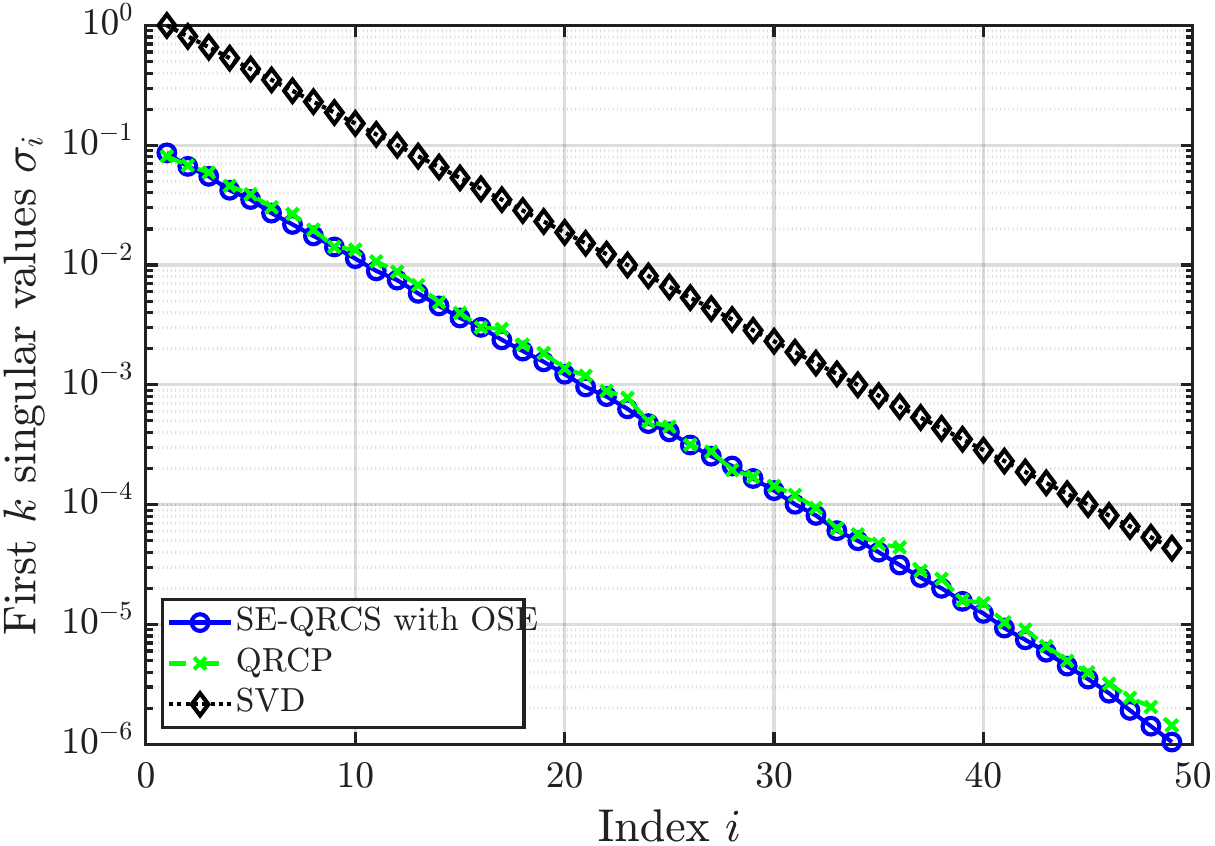"}
        \caption{} 
        \label{fig:sing_exponential}
    \end{subfigure}
    \hfill
    \begin{subfigure}[b]{0.48\textwidth}
        \centering
        \includegraphics[width=\textwidth]{"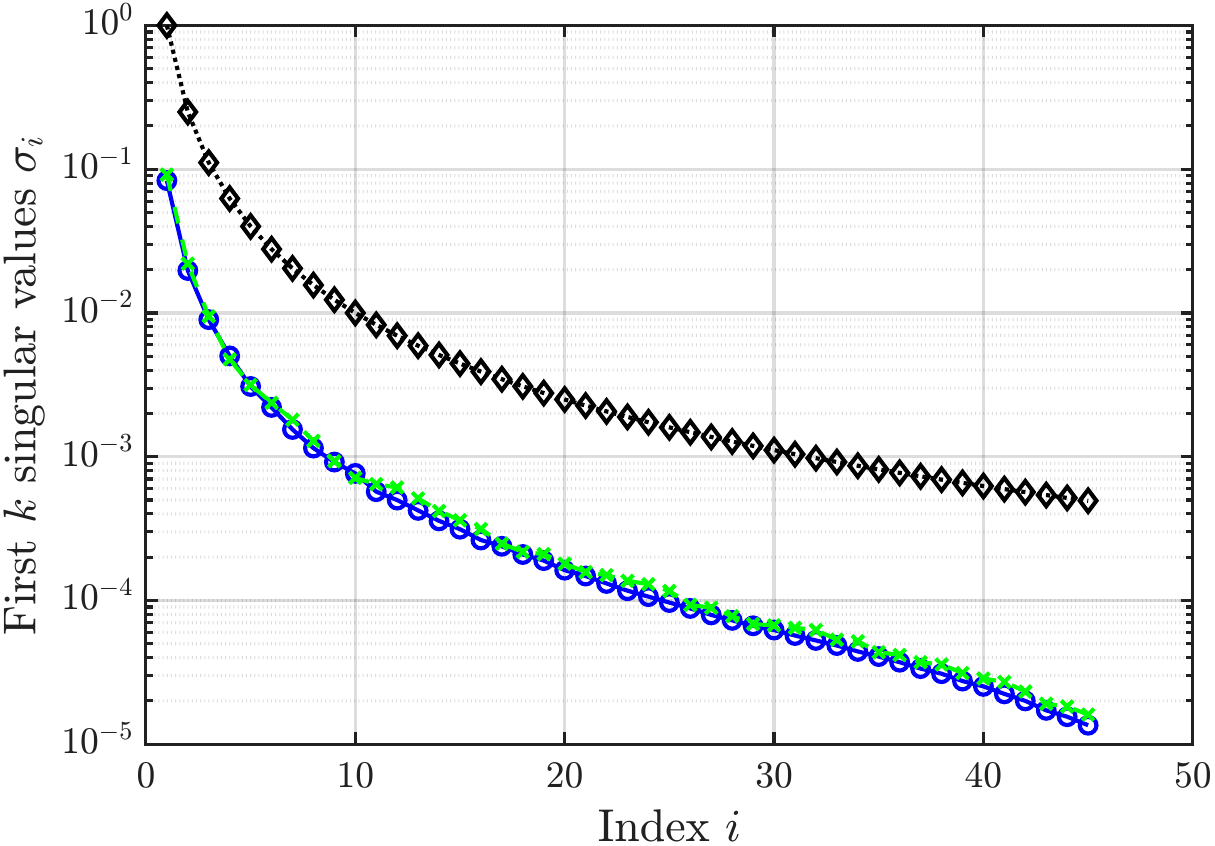"}
        \caption{}
        \label{fig:sing_quadratic}
    \end{subfigure}
    
    \vspace{0.3cm} 
    
    \begin{subfigure}[b]{0.48\textwidth}
        \centering
        \includegraphics[width=\textwidth]{"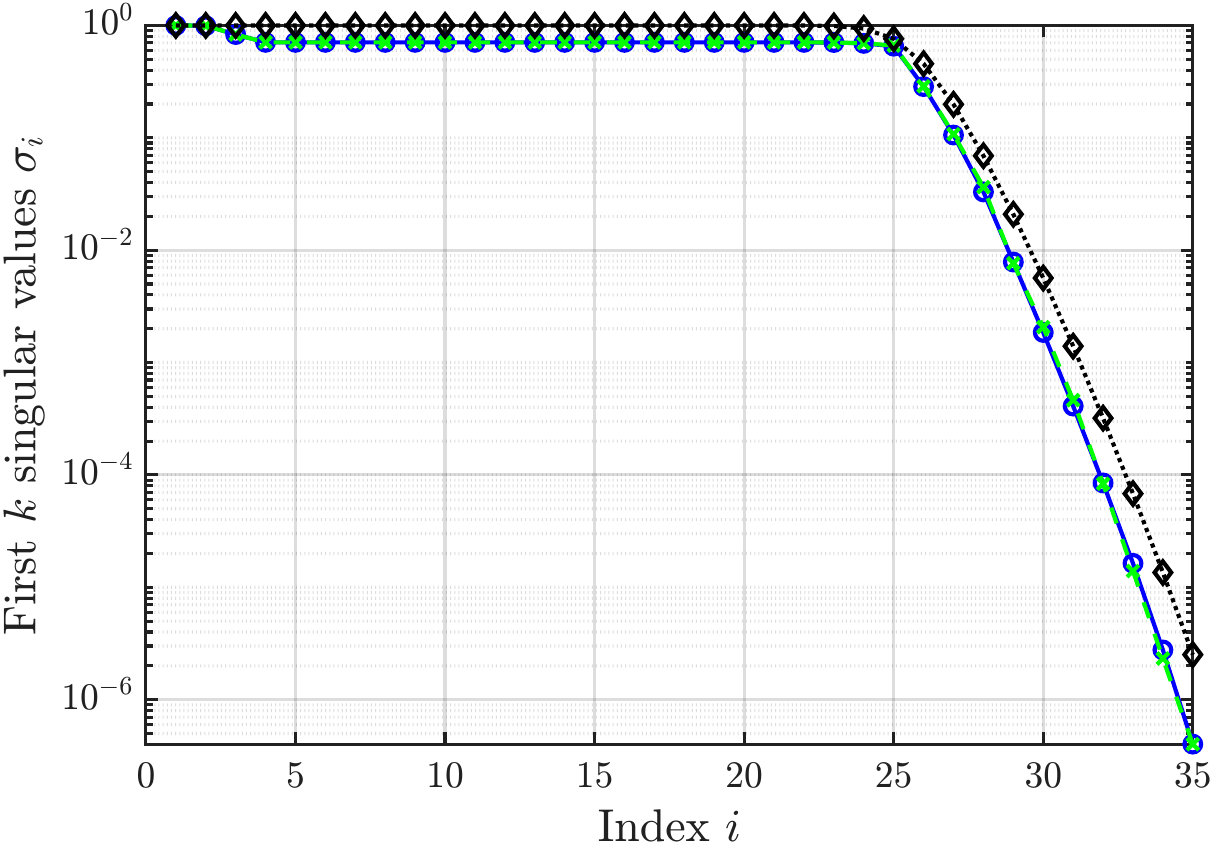"}
        \caption{}
        \label{fig:sing_prolate}
    \end{subfigure}
    \hfill
    \begin{subfigure}[b]{0.48\textwidth}
        \centering
        \includegraphics[width=\textwidth]{"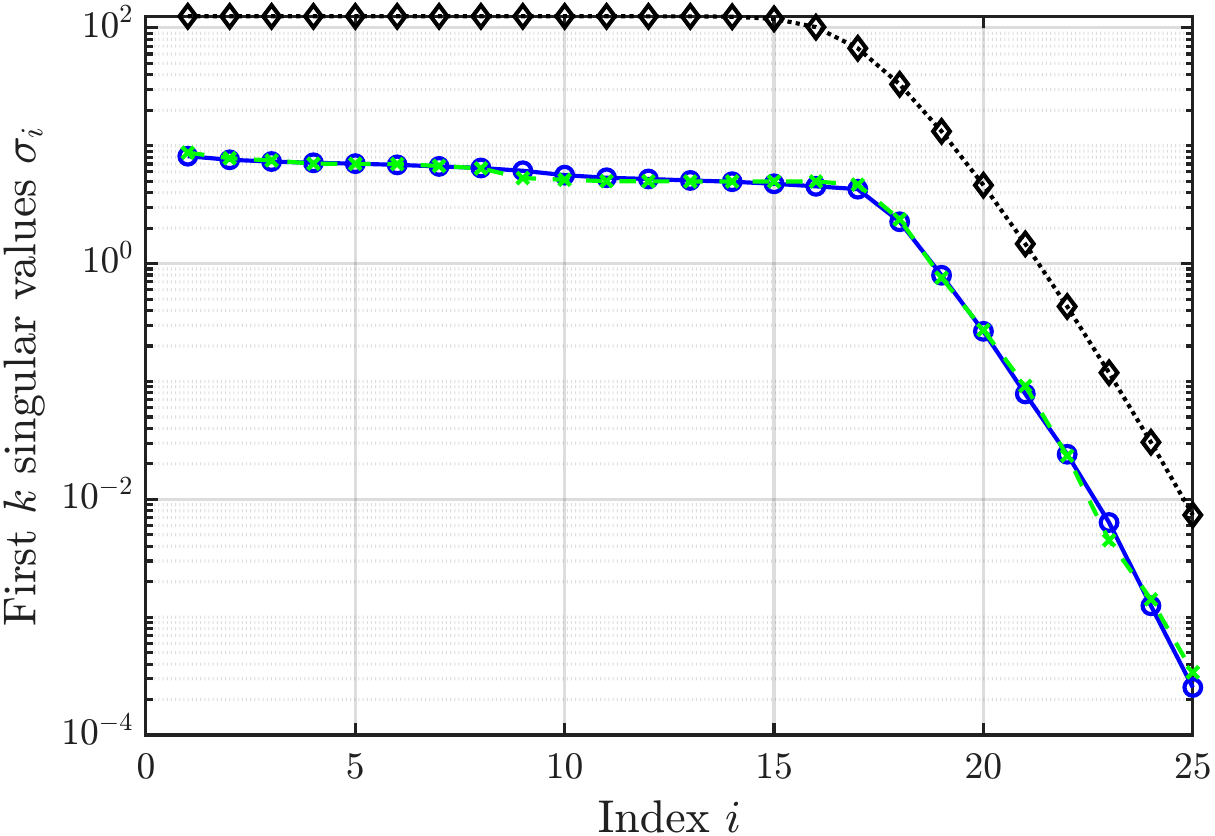"}
        \caption{}
        \label{fig:sing_chebvand}
    \end{subfigure}
    
    \caption{The figures show the first $k$ singular values $\sigma_{i}(R_{11})$ obtained across four matrix examples of size $50 \times 10000$: (a) exponential decay matrix, (b) quadratic decay matrix, (c) Prolate matrix, and (d) Chebvand matrix. Results are compared across the true singular values from exact SVD (black dotted line with diamond markers), deterministic QRCP (green dashed line with cross markers), and SE-QRCS using a CountSketch matrix with $s=1$ (solid blue line with circle markers).}
    \label{fig:results_s1}
\end{figure}
\begin{figure}[H]
    \centering
    \includegraphics[width=\linewidth]{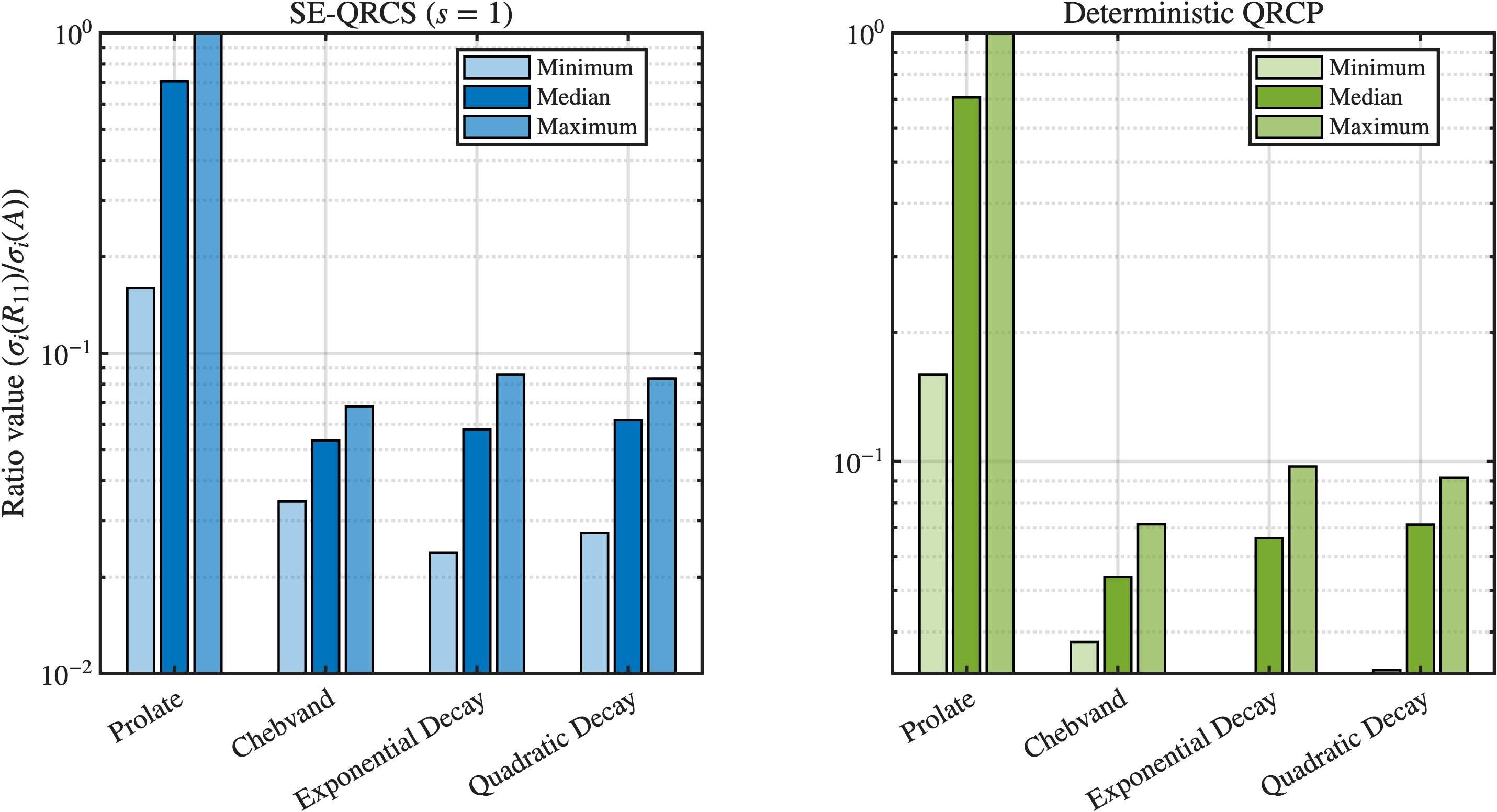}
    \caption{The grouped bar charts display the minimum, median, and maximum ratios of $\sigma_i(R_{11})/\sigma_i(A)$ plotted on a logarithmic scale. Panels show a comparison between: (a) SE-QRCS with an Oblivious Sparse Embedding (OSE, blue) and  (b) the deterministic QRCP baseline (green). The matrices used are of size $50 \times 10000$ }
    \label{fig:placeholder}
\end{figure}
The ranks of the matrices in Table \ref{tab:matrix_list} are chosen so that the relative spectral residual error is as stated in Table \ref{tab:residual error}. We also compare it with the relative spectral residual error, $\frac{\|R_{22}\|_{2}}{\|A\|_{2}}$, of QR with column pivoting. These results show that the residual error of \RandWSQR{} is nearly the same as that of QRCP, indicating that the two methods achieve comparable accuracy.
\begin{table}[H]
    \centering
    \begin{tabular}{|c|c|c|}
        \hline
        \textbf{Type of Matrix} & \textbf{QRCP $\left(\frac{\|R_{22}\|_{2}}{\|A\|_{2}}\right)$} & \textbf{\RandWSQR{} $\left(\frac{\|R_{22}\|_{2}}{\|A\|_{2}}\right)$} \\ \hline
         Matrix with exponential spectral decay &   6.0141e-05
         & 6.8971e-05 \\ \hline
        Matrix with quadratic spectral decay & 7.1895e-04 &8.3845e-04\\ \hline
        Fiedler matrix &   2.0075e-07 &  2.0075e-07 \\ \hline
        ROM &  1.4800e-02 &     1.4800e-02 \\ \hline
        Prolate &     2.1832e-06 &   1.4889e-06\\ \hline
        Chebvand &2.2023e-05 & 2.2906e-05\\ \hline
        Random low rank matrix & 3.2386e-16 & 1.0847e-15 \\ 
  \hline
    \end{tabular}
    \caption{Comparison of the relative spectral residual error between \RandWSQR{} and QRCP for several matrices in our test set. }
    \label{tab:residual error}
\end{table}
\subsection{\RandWSQR{} using oblivious sparse embeddings with $\sss>1$ and LESS Embeddings}
\label{experiments_s>1}
 This section presents the results obtained when using oblivious sparse embeddings with $s>1$ and Leverage Score Sparsified (LESS) embeddings for four matrices: three of dimension $200\times 10000$( the random outlier matrix, the random low rank matrix, and an ill conditioned matrix, the Fiedler matrix) and Genomic dataset matrix of size $400\times 239322$. For the OSE framework, the embedding dimension is set to $\ell = \left\lfloor 2 \cdot \sM \cdot \log(\sM) \right\rfloor$ with sparsity parameter $s=6$. For the LESS framework, we use \texttt{LESS-IND-ROWS} variant with parameter $q=0.1$ \cite{chenakkod2024optimal}. Figures \ref{fig:results_s>1}(a)--\ref{fig:results_s>1}(d) show the singular values of $R_{11}$, $\sigma_{i}(R_{11})$ and the singular values of $A$, $\sigma_{i}(A)$, with $R_{11}$ resulting from \RandWSQR{} and QRCP respectively. The table in Figure \ref{fig:results_s>1} shows the median of $p$, which is the corresponding size of $\tilde{A}_{1}$, across these four test cases for both OSE and LESS embeddings. Across all test matrices, LESS performs similarly to OSE but has significantly smaller size $p$. Regarding the singular value approximation, these results show that in both the random outlier and Fiedler case, the singular values $\sigma_{i}(R_{11})$ for both \RandWSQR{} (under OSE and LESS configurations) and QRCP are nearly identical to the corresponding singular values $\sigma_{i}(A)$. This can also be seen in Figure \ref{fig:summary_barcharts} where the minimum, maximum, and median of the ratios $\sigma_{i}(R_{11})/\sigma_{i}(A)$ for all three factorizations are close to $1$. In the random low rank matrix, the singular values $\sigma_{i}(R_{11})$ obtained from \RandWSQR{} are not close to the exact singular values, but still align with QRCP.
\begin{figure}[htbp] 
    \centering
    
    \begin{subfigure}{0.45\linewidth} 
        \centering
        \includegraphics[width=\linewidth]{"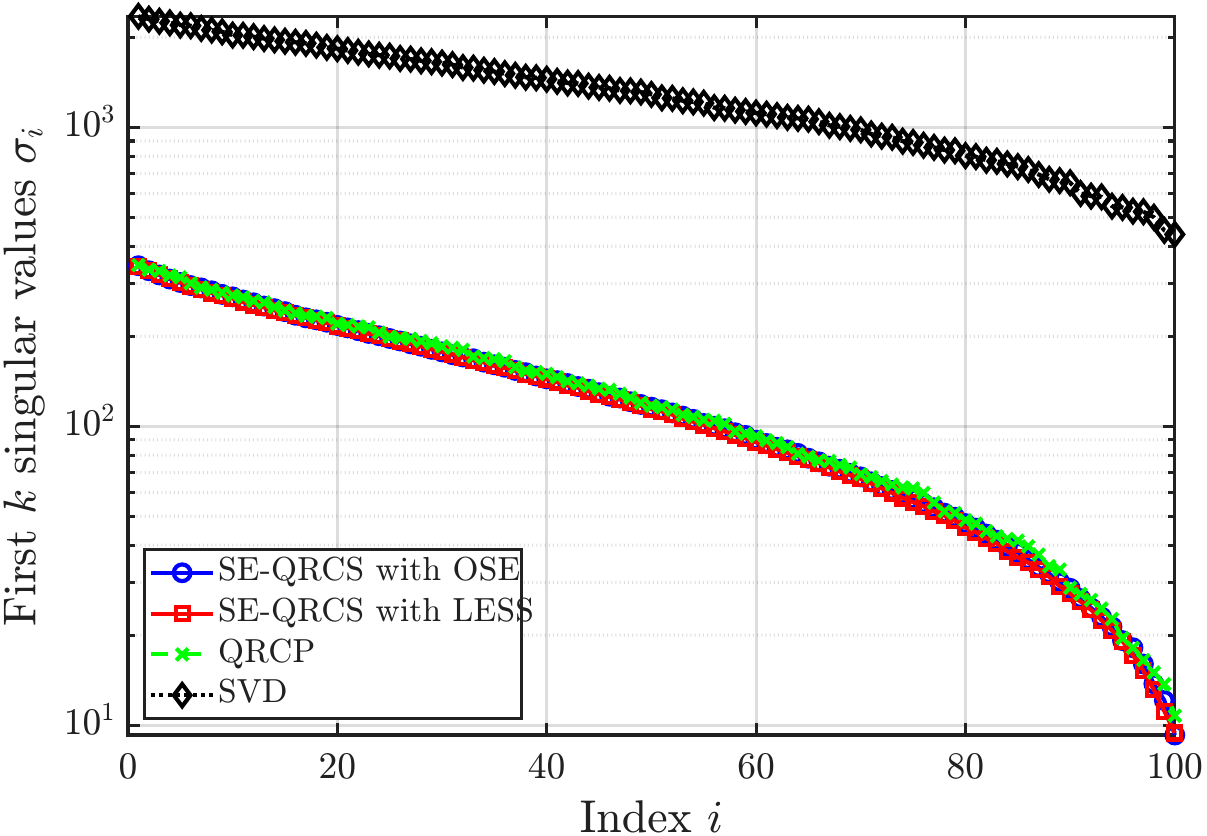"} 
        \caption{Random Low-Rank Matrix}
        \label{fig:lowrank_sing}
    \end{subfigure}
    \hfill 
    \begin{subfigure}{0.45\linewidth} 
        \centering
        \includegraphics[width=\linewidth]{"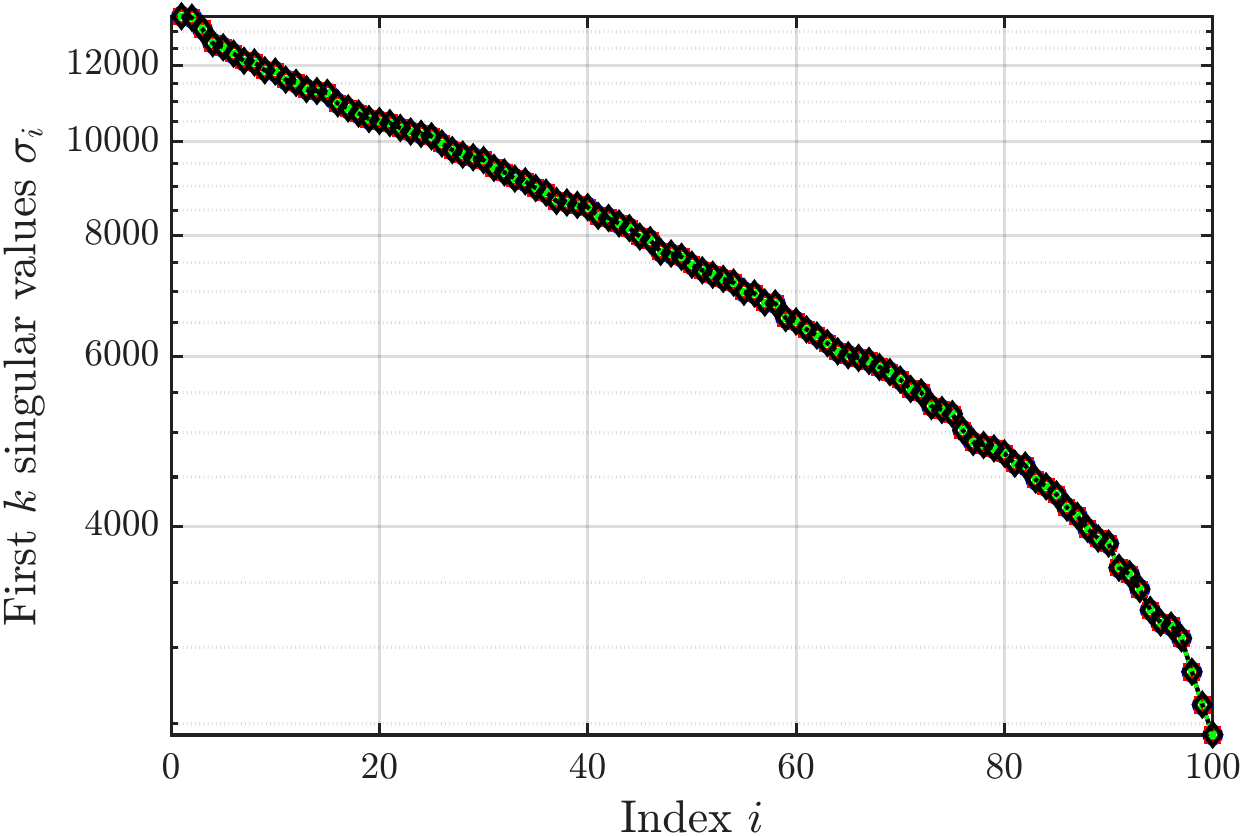"}
        \caption{Random Outlier Matrix}
        \label{fig:outlier_sing}
    \end{subfigure}
    
    \vspace{0.5cm} 
    
    \begin{subfigure}{0.47\linewidth} 
        \centering
        \includegraphics[width=\linewidth]{"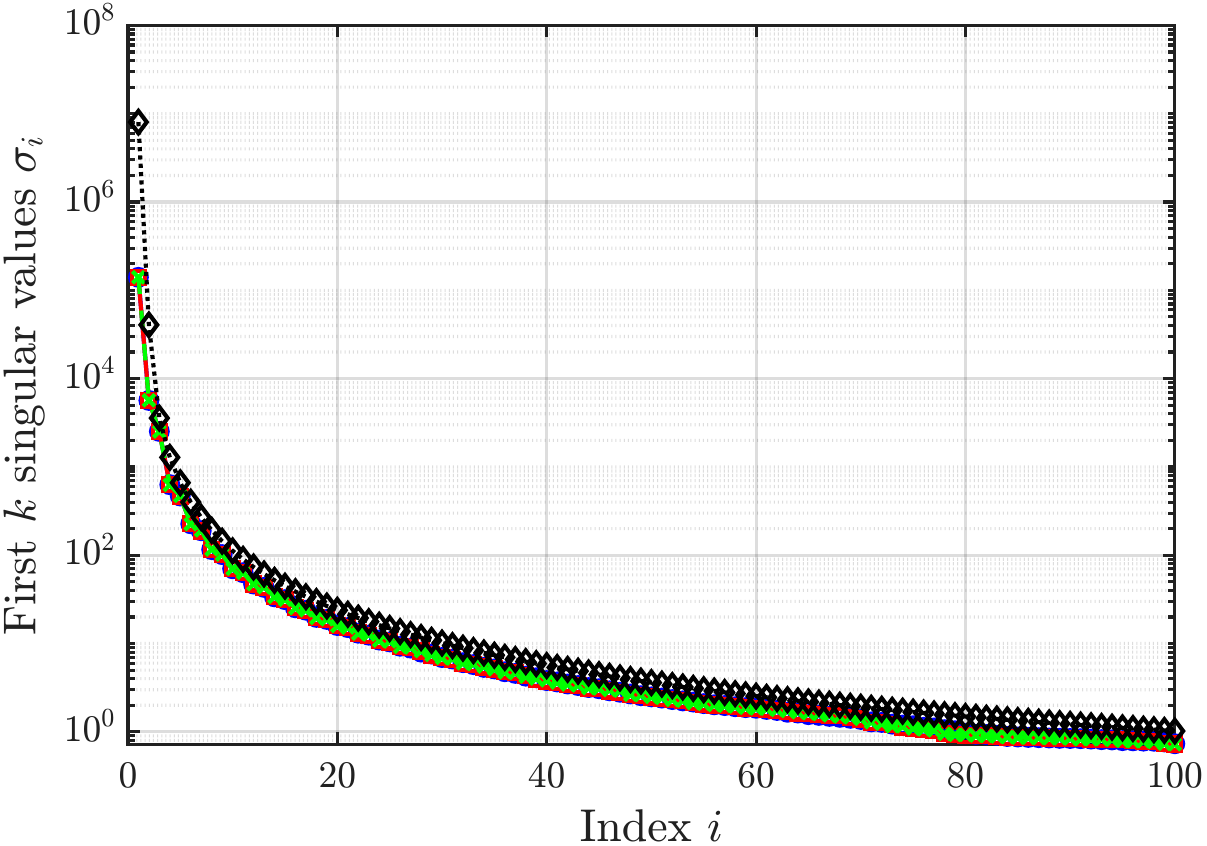"}
        \caption{Fiedler Matrix}
        \label{fig:fiedler_sing}
    \end{subfigure}
    \hfill
    \begin{subfigure}{0.47\linewidth}
        \centering
        \includegraphics[width=\linewidth]{"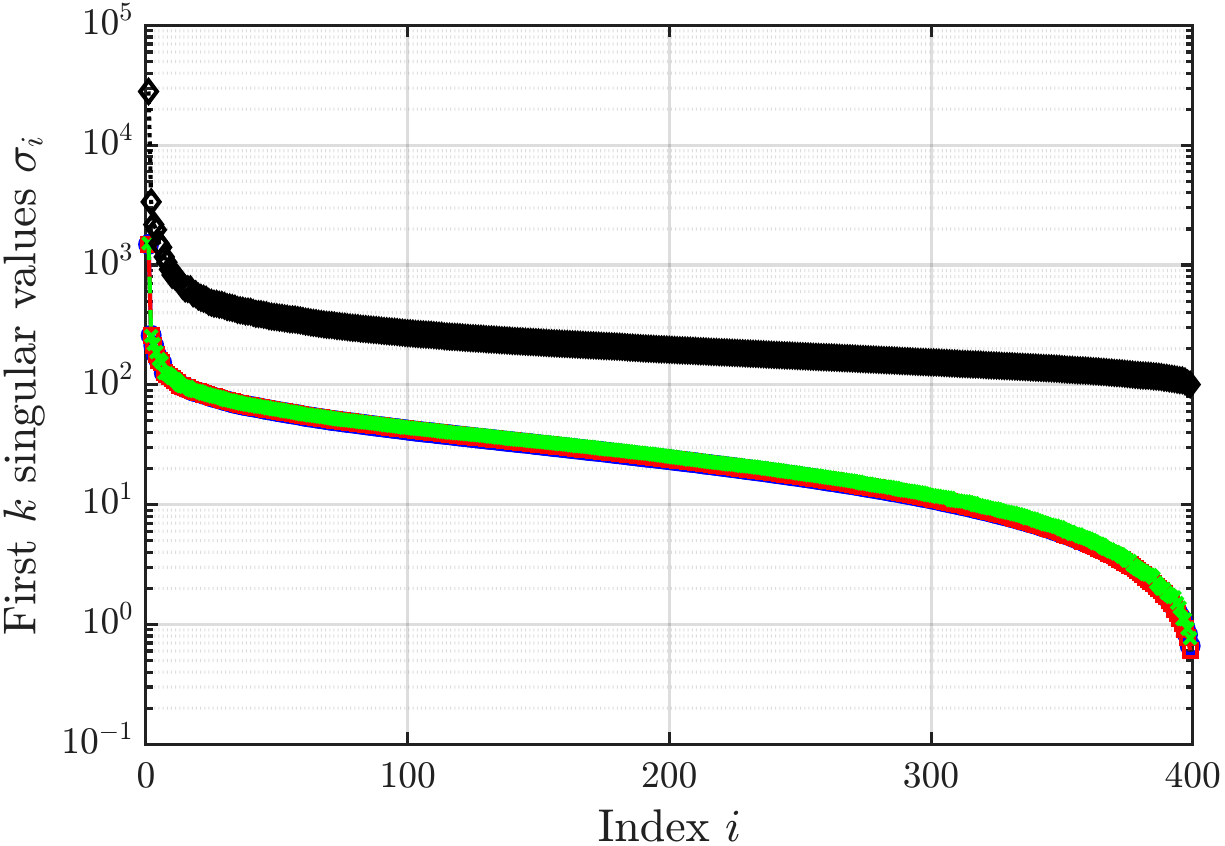"}
        \caption{Genomic Dataset}
        \label{fig:matrix4_sing}
    \end{subfigure}
    
    \vspace{0.6cm} 
    \small
    \begin{tabular}{lcc}
        \hline
        \textbf{Test Matrix ($m \times n$)}  & \textbf{Median $p$ (Oblivious)} & \textbf{Median $p$ (LESS)} \\ \hline
        Random Outlier ($200 \times 10000$) & 2609 & 780 \\
        Random Low Rank ($200 \times 10000$) &  3069 &  1816 \\
        Fiedler Matrix ($200 \times 10000$)   & 2648 & 356 \\
        Genomic Dataset ($400 \times 239322$)        &   6992 & 14226 \\ \hline
    \end{tabular}

    \vspace{0.4cm}

    \caption{Figures~\ref{fig:lowrank_sing}, \ref{fig:outlier_sing}, \ref{fig:fiedler_sing} and \ref{fig:matrix4_sing} plot the median values over 20 trials for the first $k=100$ and $k=400$ singular values $\sigma_i(R_{11})$ or $\sigma_i(A)$ evaluated against SVD (black dotted line with diamond markers), deterministic QRCP (green dashed line with cross markers) , and \RandWSQR{} using Oblivious Sparse Embeddings OSE( solid blue line with circles) or LESS Embeddings (solid red line with squares). The table tracks the corresponding median selected column count $p$ across each individual trial block.}
    \label{fig:results_s>1}
\end{figure}

\begin{figure}[htbp] 
    \centering
    \includegraphics[width=\linewidth]{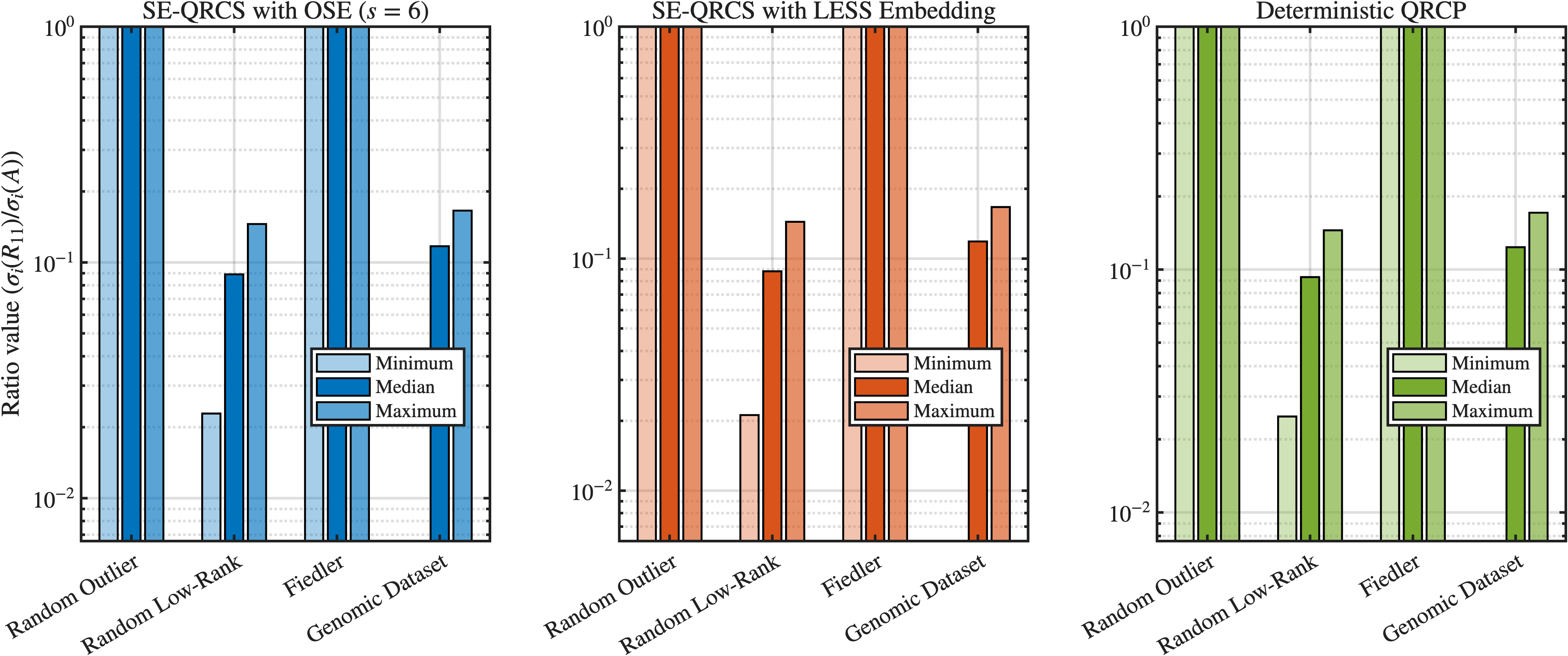}
    \caption{The grouped bar charts display the minimum, median, and maximum ratios of $\sigma_i(R_{11})/\sigma_i(A)$ plotted on a logarithmic scale. Panels show a comparison between: (a) SE-QRCS with an Oblivious Sparse Embedding (OSE, blue), (b) SE-QRCS with a LESS Embedding (red), and (c) the deterministic QRCP baseline (green). }
    \label{fig:summary_barcharts}
\end{figure}

\subsection{Comparison for different sketching dimension and sparsity}
\label{comparison subsection}
In Figures~\ref{fig:vs_l}--\ref{fig:vs_s}, we evaluate the performance of 
\RandWSQR{} using an Oblivious Subspace Embedding (OSE) across various embedding dimensions~$\ell$ and sparsity parameters~$s$. For these experiments, we use ROM matrix of size 
$100 \times 10^{6}$ containing $40$ amplified columns. 
The numerical results show that varying $\ell$ and $s$ yields 
comparable singular value ratios. This can be tied to the 
structural trade-off between the sketching dimension~$\ell$ and the column 
dimension~$p$ of the  matrix~$\tilde{A}_{1}$. Specifically, as the 
sketching dimension increases, the number of selected columns~$p$ decreases, 
and vice versa. This trade-off also explains the subtle variations observed 
in the runtime. We note that the computational overhead scales more significantly with larger values of~$p$ than with~$\ell$, as the larger column dimension dominates the execution time of the algorithm.
\begin{figure}[H]
    \centering
    \includegraphics[width=\linewidth]{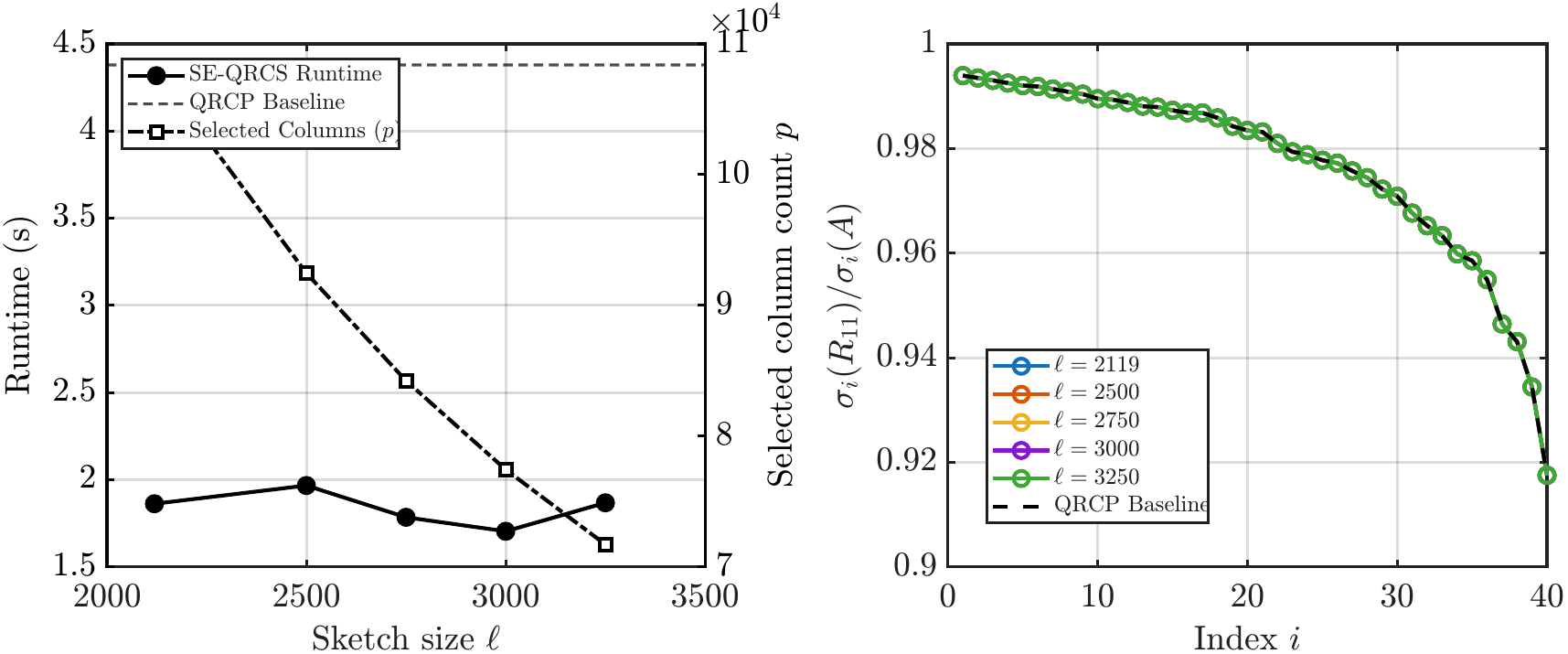}
  \caption{Performance metrics for the Random Outlier Matrix (ROM) as a function of sketching size $\ell$: (Left) variation of runtime and size of $\tilde{A}_{1}$, $p$, and (Right) comparison of the first $k$ singular value ratios between \RandWSQR{} and QRCP.}
    \label{fig:vs_l}
\end{figure}
\begin{figure}[h!]
    \centering
    \includegraphics[width=\linewidth]{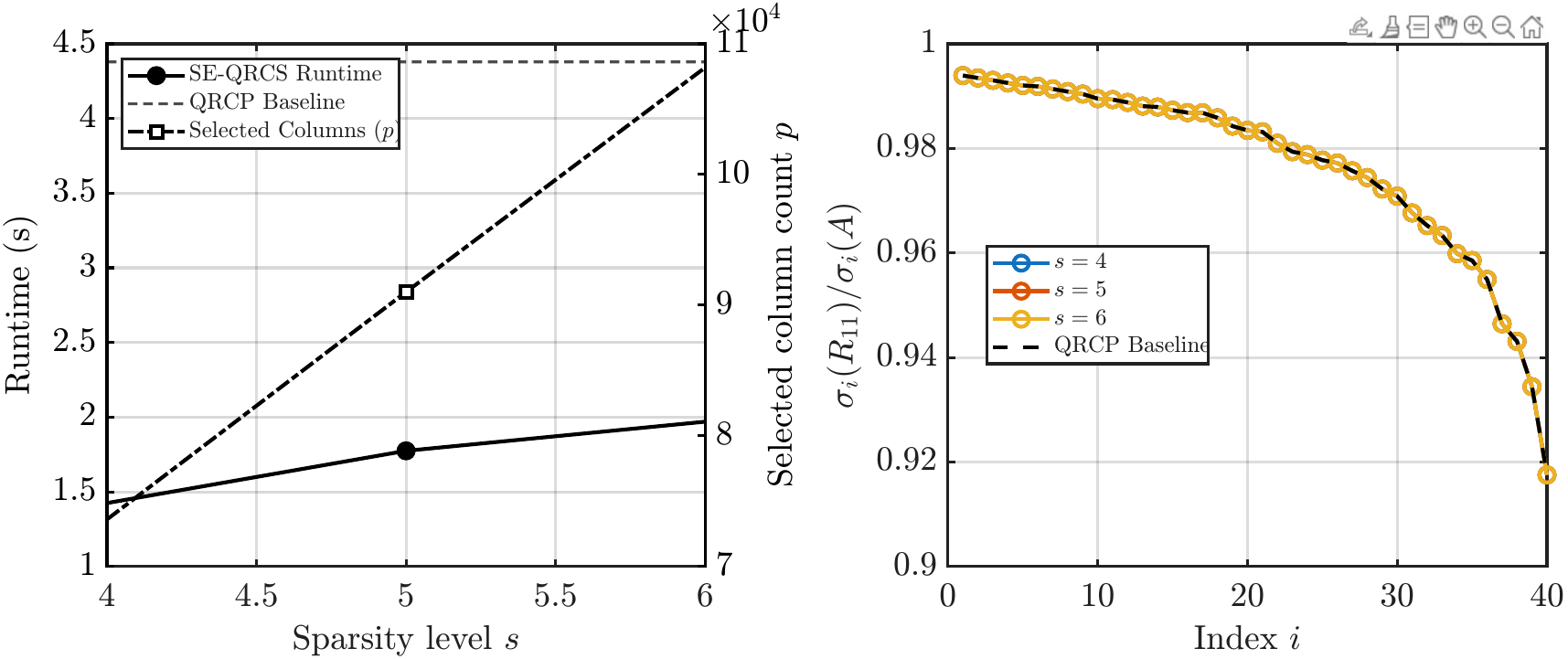}
  \caption{Performance metrics for the Random Outlier Matrix (ROM) as a function of sparsity level $s$: (Left) variation of runtime and size of $\tilde{A}_{1}$,$p$, and (Right) comparison of the first $k$ singular value ratios between \RandWSQR{} and QRCP.}
    \label{fig:vs_s}
\end{figure}

\subsection{Computational complexity and runtime}
\label{computational time}
To evaluate the reduction in computational complexity and thus runtime, Figures \ref{figure 26} and \ref{figure 27} show the execution time for pivot selection using \RandWSQR{} and QRCP factorization. In this experiment, the matrices are of size $100\times  10^{6}$, and a CountSketch embedding is used in the \RandWSQR{} case, with ranks 70 and 100 for the matrix with exponential spectral decay and the random matrix, respectively. The results indicate a reduction in runtime by factors of 7 and 10 for the random matrix and the matrix with exponential spectral decay, respectively. Examining the time taken for QRCP on $B$ and $\tilde{A}_{1}$ alone shows a speedup by factor of $87$. With a more efficient implementation of sketching using sparse embeddings, even greater speedups are possible. Figure~\ref{fig:Less_vs_OSE_time} illustrates the runtime comparison between \RandWSQR{} utilizing a CountSketch matrix versus a LESS embedding across varying matrix dimensions $100 \times n$. The empirical results show that the LESS embedding achieves superior efficiency, with its performance advantage becoming increasingly pronounced as the matrix size $n$ scales.

   \begin{figure}[H]
    \centering
    \begin{minipage}[t]{0.40\linewidth} 
        \captionsetup{font=footnotesize}
        \centering
        \includegraphics[width=\linewidth]{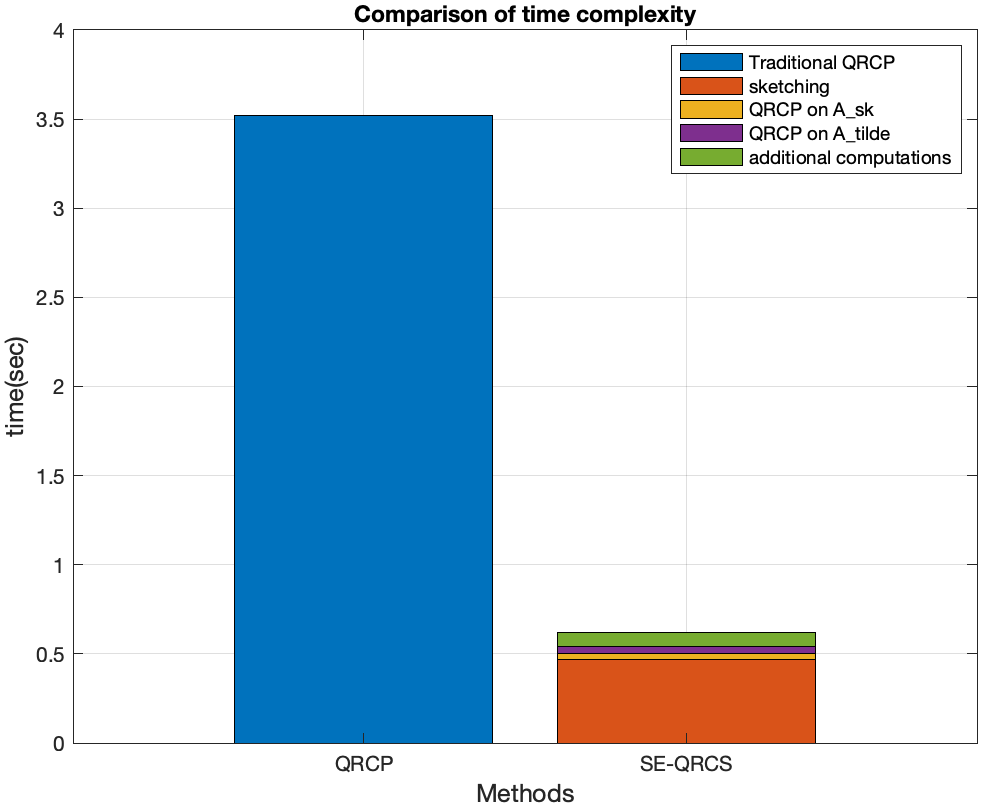}
        \caption{Random Matrix}
        \label{figure 26}
    \end{minipage}
    \hspace{0.02\linewidth} 
    \begin{minipage}[t]{0.40\linewidth}
        \captionsetup{font=footnotesize}
        \centering
        \includegraphics[width=\linewidth]{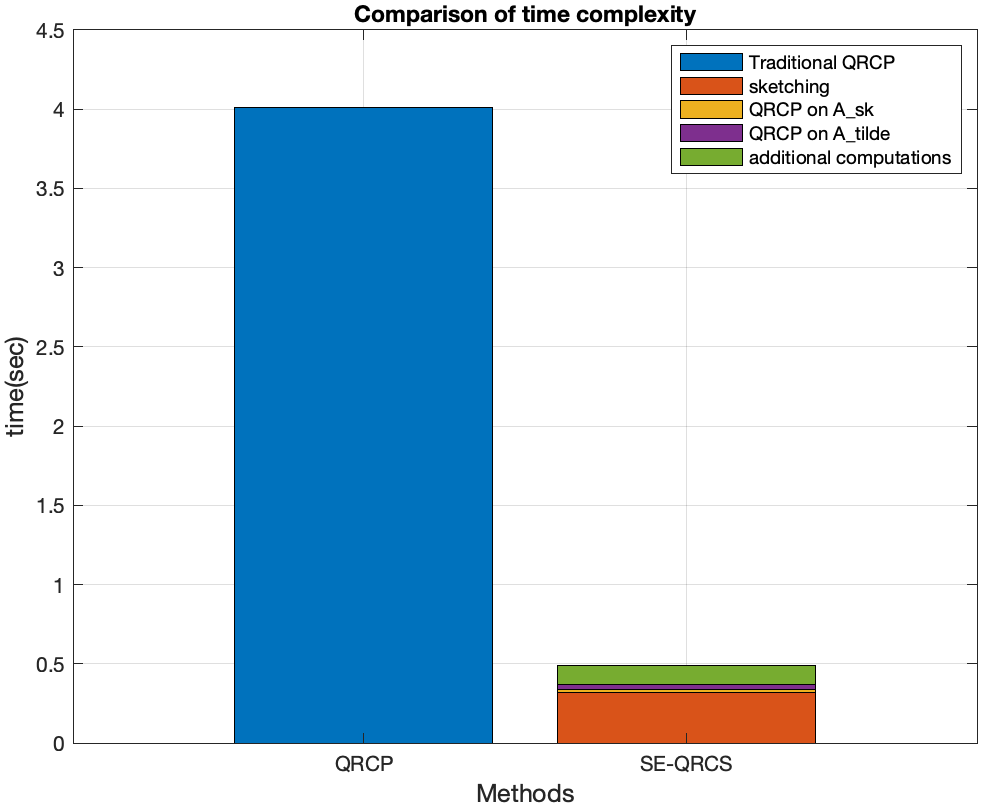}
        \caption{Matrix with exponential spectral decay}
        \label{figure 27}
    \end{minipage}
    \label{fig:ratio_comp_s=1}
\end{figure}

\begin{figure}[H]
    \centering
    \includegraphics[width=0.5\linewidth]{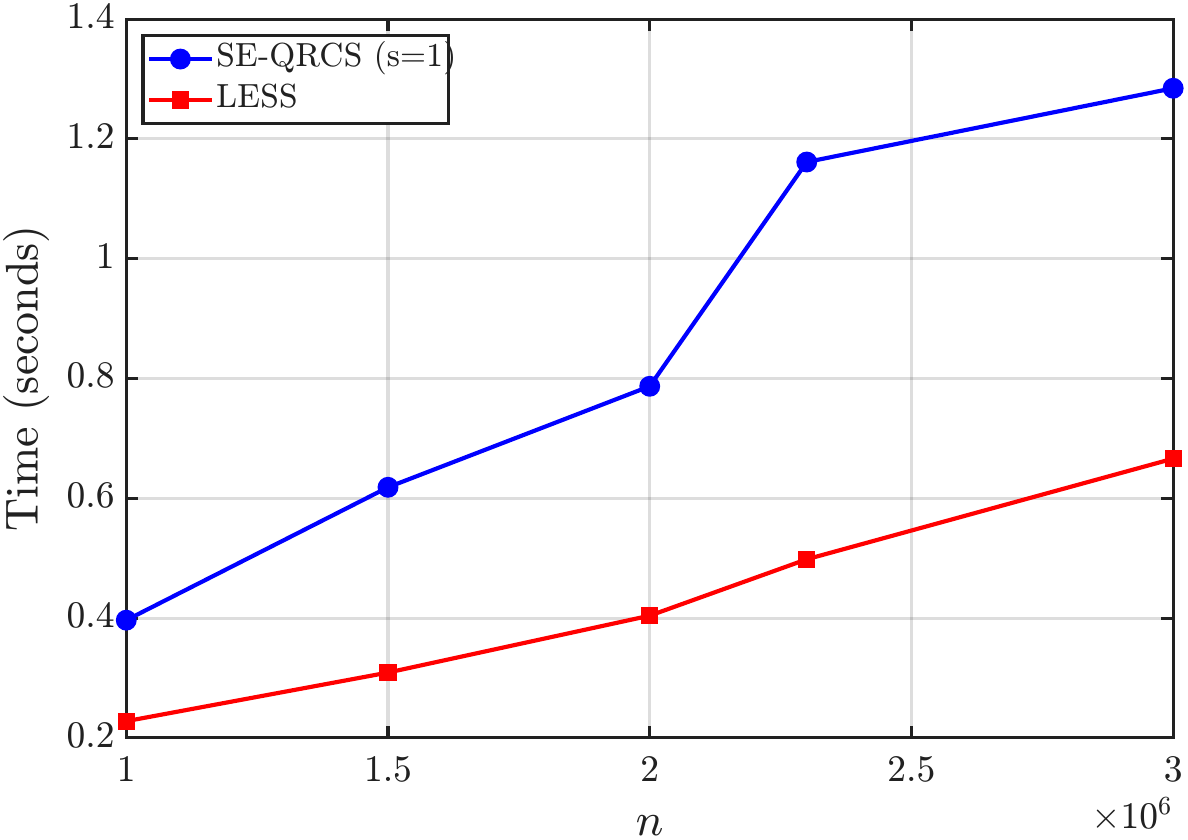}
    \caption{Comparison of time between CountSketch embedding and LESS on random matrix of different sizes $100\times n$ with rank $k=60$. For CountSketch embedding $\ell=10000$ and $\ell=1000$ for LESS. The time compared corresponds to SE-QRCS without the addition of time for computing leverage scores in LESS case.}
    \label{fig:Less_vs_OSE_time}
\end{figure}

\subsection{LU\_PRRP using \RandWSQR{}}
 In this section, we introduce one application of \RandWSQR{} on the LU\_PRRP algorithm presented in \cite{khabou2013lu}. In this algorithm, LU factorization algorithm is performed based on strong rank revealing QR panel factorization. The matrix $A\in \mathbb{R}^{n\times n}$ is divided into panels of size $b$. At each iteration, sRRQR is applied to the transpose of the current panel $A^{(i)}$ and the obtained permutation of the rows is applied to the matrix $A$. After that, sRRQR is applied to the first block $b\times b$ in the panel, and similarly the obtained row permutation is applied to the matrix $A$. The trailing matrix is then updated. They show in \cite{khabou2013lu} that the resulting factorization has better stability than the traditional LU factorization with growth factor $g_{w}=(1+fb)^{n/b-1}$, where $f$ is the strong rank revealing QR constant. In particular, it produces accurate results on certain matrices where Gaussian elimination with partial pivoting (GEPP) fails, such as the Wilkinson matrix \cite{wilkinson1961error}. 

LU\_PRRP factorization can be accelerated by applying \RandWSQR{} to the transpose of the tall and skinny panels, i.e. $A^{(i)}\in \mathbb{R}^{w\times b}$ with $w>2b$. This reduces the algorithm's complexity while still providing a good growth factor. For example, it works on the Wilkinson matrix and provides a small growth factor, provided that the block size $b$ is not chosen very small, since the bound on $|R_{11}^{-1}R_{12}|$ in \RandWSQR{} factorization is greater than that of sRRQR.

Table \ref{Table2:application} presents the results of running LU\_PRRP with \RandWSQR{} on random matrices of varying dimensions and block sizes. It reports the growth factor, $\|U\|_{1}$ and the Frobenius error; these are compared against the  GEPP results in Table \ref{Table1:application}. The values indicate an improvement in the growth factor for block sizes $b\geq 16$, with only a slight increase for $b=8$. We also evaluated the performance over some special matrices, including ill conditioned cases and the Wilkinson matrix, as shown in Tables \ref{Table3:application} and \ref{Table4:application}.
\begin{table}[htbp]
\centering
\begin{tabular}{|c|c|c|c|c|}
\hline
n & growth factor & $\|U\|_{1}$ & $\|U^{-1}\|_{1}$ & $\frac{\|PA-LU\|_{F}}{\|A\|_{F}}$ \\ \hline
8192 & 56.6869 & 1.5931e+05 & 424.1910 & 4.5815e-14 \\ \hline
4096 & 35.9525 & 5.9614e+04 & 245.1152 & 2.3454e-14 \\ \hline
2048 & 22.8242 & 2.2521e+04 & 79.4674 & 1.2246e-14 \\ \hline
1024 & 18.0337 & 78.4041 & 8.3458e+03 & 5.9966e-15 \\ \hline
\end{tabular}
\caption{Results of applying GEPP on different random matrices.}
\label{Table1:application}
\end{table}
\begin{table}[htbp]
\centering
\begin{tabular}{|c|c|c|c|c|c|}
\hline
n & b & growth factor &  $\|U\|_{1}$ & $\|U^{-1}\|_{1}$ & $\frac{\|PA-LU\|_{F}}{\|A\|_{F}}$ \\ \hline
\multirow{4}{*}{8192} & 64 & 32.2024 & 2.0116e+05 & 301.1565
 & 1.4381e-13
\\ \cline{2-6}
 & 32 &41.4839 & 2.0467e+05&  196.3066 &  1.0930e-13
 \\ \cline{2-6}
& 16 & 56.0013 & 2.0846e+05 &  258.1333 & 8.7152e-14
 \\ \cline{2-6}
& 8 & 60.8820 & 1.9780e+05 &284.0822 & 6.8535e-14 \\ \hline
\multirow{4}{*}{4096} & 64 & 21.6249 & 7.2546e+04& 424.1910 & 7.7317e-14 
\\ \cline{2-6}
& 32 & 30.0180 & 7.2769e+04 & 379.1695 & 5.8879e-14
 \\ \cline{2-6}
& 16 & 33.8969 & 7.7480e+04 &378.0365 & 4.7525e-14 \\ \cline{2-6}
& 8 & 36.91 & 7.5933e+04 & 245.1152 & 3.5475e-14 \\ \hline
\multirow{4}{*}{2048} & 64 & 12.9146 & 2.6417e+04 & 78.7006 & 3.9019e-14 
\\ \cline{2-6}
& 32 & 16.4287 &  2.7682e+04 & 78.9232 & 3.1959e-14 
\\ \cline{2-6}
& 16 & 28.5681 & 2.7684e+04 & 101.1060 & 2.6362e-14 
\\ \cline{2-6}
& 8 & 32.0041 & 2.9034e+04 & 98.5623 & 2.0639e-14 \\ \hline
\multirow{4}{*}{1024} & 64 & 10.4042 & 9.4457e+03 & 55.0783 & 1.8277e-14 
\\ \cline{2-6}
& 32 & 11.9826 & 9.8460e+03 & 55.0783 & 1.7520e-14
\\ \cline{2-6}
& 16 & 16.7384 & 1.0334e+04 &  89.0014 &  1.4911e-14 
\\ \cline{2-6}
& 8 & 21.5211 & 1.0400e+04 &139.4189 &  1.1525e-14 \\ \hline
\end{tabular}
\caption{Results of applying LU-PRRP using \RandWSQR{} on different random matrices}
\label{Table2:application}
\end{table}
\begin{table}[htbp]
\centering
\begin{tabular}{|c|c|c|c|c|}
\hline
Type & growth factor & $\|U\|_{1}$ & $\|U^{-1}\|_{1}$ & $\frac{\|PA-LU\|_{F}}{\|A\|_{F}}$ \\ \hline
Fiedler & 2 & 8.9979e+06 & 2.3760 & 1.1163e-15 \\ \hline
Chebvand & 319.8084 & 9.1629e+17 & 1.0198e+05 & 0.6387e-13 \\ \hline
Prolate & 29.5330 &  3.5291e+03 & 1.1522e+21 & 0.3576e-13 \\ \hline
Condex & 1 & 602.3509 & 1.2946 & 0.0652e-14\\ \hline
Circul & 1& 4.0503e+05 & 0.5489e-03 & 0.0682e-14\\ \hline
Wilkinson & 4.4305& 1.0249e+03 & 194.0805 & 0.5484e-15\\ \hline
\end{tabular}
\caption{Results of applying LU\_PRRP using \RandWSQR{} $(\text{with }s=1)$ on different special matrices.}
\label{Table3:application}
\end{table}
\begin{table}[htbp]
\centering
\begin{tabular}{|c|c|c|c|c|}
\hline
Type & growth factor & $\|U\|_{1}$ & $\|U^{-1}\|_{1}$ & $\frac{\|PA-LU\|_{F}}{\|A\|_{F}}$ \\ \hline
Fiedler & 1.9995 & 1.6769e+07 & 2 & 4.5952e-15 \\ \hline
Chebvand & 202.4801 & 1.3953e+17 & 0.7487e+05 & 0.3045e-13 \\ \hline
Prolate & 20.5850 & 2.1260e+03 & 0.0075e+21 & 0.1060e-13 \\ \hline
Condex & 1 & 602.3509 & 1.2946 & 0.0652e-14 \\ \hline
Circul & 1& 0.0836e+05 & 0.5459e-03 & 0.1444e-14\\ \hline
\end{tabular}
\caption{Results of applying GEPP on different special matrices.}
\label{Table4:application}
\end{table}

\section{Conclusion}
This work presents \RandWSQR{} factorization, a method for selecting $k$ columns from a matrix to capture its spectrum or to construct a low rank approximation. This approach employs sparse embeddings to reduce computational costs, particularly for matrices with many more columns than rows. It is shown that this factorization satisfies the strong rank revealing QR properties. Future work could explore extending this algorithm to tensor factorizations or using a different matrix factorization other than sRRQR.

\bibliographystyle{plainnat}
 \nocite{*}
 \bibliography{references}

\end{document}